\documentclass{amsart}
\usepackage{amsmath}
\usepackage{amssymb}
\usepackage{amsthm}
\usepackage{amsfonts}
\usepackage{tikz-cd}
\usepackage{enumitem}
\usepackage[hidelinks]{hyperref}
\usepackage{dsfont}
\usepackage{mathrsfs}
\usepackage{graphicx}

\makeatletter
\@namedef{subjclassname@2020}{\textup{2020} Mathematics Subject Classification}
\makeatother

\binoppenalty=99999
\relpenalty=99999

\newtheorem{theorem}{Theorem}[section]
\newtheorem{proposition}[theorem]{Proposition}
\newtheorem{corollary}[theorem]{Corollary}
\newtheorem{lemma}[theorem]{Lemma}

\theoremstyle{definition}
\newtheorem{example}[theorem]{Example}
\newtheorem{notation}[theorem]{Notation}
\newtheorem{definition}[theorem]{Definition}
\newtheorem{setup}[theorem]{Setup}
\newtheorem{remark}[theorem]{Remark}

\theoremstyle{remark}

\DeclareMathOperator{\Mon}{\mathrm{Mon}}
\newcommand{\Spec}{\operatorname{Spec}}
\def\C{\mathbb{C}}

\def\R{\mathbb{R}}
\def\H{\mathsf{H}}
\def\L{\mathrm{L}}

\def\P{\mathbb{P}}
\def\A{\mathbb{A}}

\def\H{\mathrm{H}}
\def\F{\mathrm{F}}
\def\S{\mathrm{S}}

\def\Gm{\mathbb{G}_m}
\def\GL{\mathrm{GL}}
\def\PGL{\mathrm{PGL}}
\def\SL{\mathrm{SL}}
\def\ASL{\mathrm{ASL}}
\def\AGL{\mathrm{AGL}}

\def\V{\mathrm{V}}

\def\cO{\mathcal{O}}

\def\cX{\mathcal{X}}
\def\cY{\mathcal{Y}}
\def\cZ{\mathcal{Z}}

\def\cW{\mathcal{W}}
\def\cL{\mathcal{L}}
\def\cV{\mathcal{V}}
\def\cU{\mathcal{U}}

\def\cM{\mathcal{M}}
\def\cF{\mathcal{F}}
\def\cG{\mathcal{G}}
\def\cS{\mathcal{S}}
\def\mC{\mathrm{C}}
\def\G{\mathrm{G}}
\def\M{\mathrm{M}}
\def\W{\mathrm{W}}
\def\K{\mathrm{K}}
\def\fS{\mathfrak{S}}

\def\Aut{\mathrm{Aut}}
\def\uAut{\underline{\mathrm{Aut}}}
\def\Inn{\mathrm{Inn}}

\def\Zen{\mathrm{Z}}
\def\Out{\mathrm{Out}}

\def\Gal{\mathrm{Gal}}
\def\Sch{\mathrm{Sch}}
\def\Ima{\mathrm{Image}}

\def\Nor{\mathrm{N}}

\begin{document}
	\title{Stacks, Monodromy and Symmetric Cubic Surfaces}
	\author{ALBERTO LANDI}
	\address{Brown University, 151 Thayer Street, Providence, RI 02912, USA}
	\email{alberto\_landi@brown.edu}
	
	\subjclass[2020]{14F35 (primary), 14N15 (secondary)}
	
	\keywords{Monodromy, Fundamental Group, Stacks, Enumerative Problems}
	
	\begin{abstract}
		We investigate monodromy groups arising in enumerative geometry, with a particular focus on how these groups are influenced by prescribed symmetries. To study these phenomena effectively, we work in the framework of moduli stacks rather than moduli spaces. This perspective proves broadly useful for understanding and constructing monodromy. We illustrate these ideas through several examples, with special attention to the 27 lines on a cubic surface, assuming the surface admits a given symmetry group.
	\end{abstract}
	
	\maketitle
	
	\section{Introduction}
	An enumerative problem induces a generically finite and generically étale cover $\F:Y\rightarrow X$, where $Y$ is the moduli space of solutions and $X$ the moduli space of geometric conditions, with the classical solution count given by the degree of $\F$. Then, one can ask what is the monodromy group of $\F$, also known as the Galois group of the enumerative problem. This was first studied by Jordan in~\cite{Jor70}, and about a hundred years later the field enjoyed a revival with the work of Harris~\cite{Har79}. Many subsequent computations have been carried out, studying for instance Fano problems~(\cite{HK22}) or Schubert problems on Grassmannians~(\cite{Vak06}).
	See~\cite{SY21} for a more complete introduction on Galois groups of enumerative problems.
	
	In this paper, we show how stacks can be a powerful tool to compute monodromy of enumerative problems, replacing the classical cover $F$ with a cover $\cF:\cY\rightarrow\cX$ between moduli stacks. In particular, the stack-theoretic stabilizer groups can be used to explicitly construct elements of the monodromy group, and in some cases, generate it entirely.
	As an example, in Section~\ref{subsec: first examples} we give an alternative computation of the Galois group of two classical enumerative problems.
	
	Our techniques are particularly effective when studying how symmetries interact with monodromy, which is a continuation of the program initiated by Brazelton and Raman in~\cite{BR24}. As an example, let $\cM$ be the moduli stack of complex smooth cubic surfaces, with coarse moduli space $\pi_{\cM}:\cM\rightarrow\M$. The enumerative problem of the 27 lines yields a finite étale cover $\cF:\cL\rightarrow\cM$ of degree 27, inducing a generically étale cover $\F:\L\rightarrow\M$ between coarse moduli spaces. If $\mC_2$ is the group with two elements, we denote by $\cM^{=\mC_2}$ the locally-closed substack of $\cM$ of cubic surfaces $S$ such that $\Aut(S)\simeq\mC_2$, and by $\M^{=\mC_2}$ its coarse moduli space.
	\begin{theorem}\label{thm: introduction}
		The monodromy groups of the covers $\cF^{=\mC_2}:\cL|_{\cM^{=\mC_2}}\rightarrow\cM^{=\mC_2}$ and $\F^{=\mC_2}:\L|_{\M^{=\mC_2}}\rightarrow\M^{=\mC_2}$ are
		\begin{align*}
			&\Mon_{\cF^{=\mC_2}}\simeq\mathrm{GO}_4^+(3),\\
			&\Mon_{\F^{=\mC_2}}\simeq\mathrm{PGO}_4^+(3),
		\end{align*}
		where $\mathrm{GO}_4^+(3)$ is the stabilizer in $W(E_6)$ of a tritangent plane, and $\mathrm{PGO}_4^+(3)$ the quotient by its center.
	\end{theorem}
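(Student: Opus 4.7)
The plan is to prove the stack statement first and derive the coarse-space statement as a quotient. For $\cF^{=\mC_2}$, I would obtain an upper bound via an equivariance argument and a lower bound by combining the stack-theoretic contribution of the stabilizer $\mC_2$ with the geometric monodromy coming from loops in $\cM^{=\mC_2}$. Using the classification of automorphism groups of smooth cubic surfaces, for a generic $[S]\in\cM^{=\mC_2}$ the nontrivial element of $\Aut(S)$ acts on the 27 lines fixing three of them — the three lines of a distinguished tritangent plane — and swapping the remaining 24 in pairs. The resulting element $\sigma\in W(E_6)$ is the central involution of the stabilizer of that tritangent plane, so $Z_{W(E_6)}(\sigma)=\mathrm{GO}_4^+(3)$.

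Any loop in $\cM^{=\mC_2}$ based at $[S]$ transports the pair $(S,\sigma_S)$ back to itself up to isomorphism, so its image in $S_{27}$ must commute with $\sigma$; this yields the upper bound $\Mon_{\cF^{=\mC_2}}\subseteq Z_{W(E_6)}(\sigma)=\mathrm{GO}_4^+(3)$. For the lower bound, by the stack-theoretic principle developed earlier in the paper the stabilizer $\Aut(S)=\mC_2$ injects into the monodromy, producing $\sigma$ itself — which generates the center of $\mathrm{GO}_4^+(3)$. To realize the remaining quotient $\mathrm{PGO}_4^+(3)$ via geometric loops, I would consider the degree-$45$ cover $\widetilde{\cM}\to\cM$ parametrizing cubics with a marked tritangent plane; restricting over $\cM^{=\mC_2}$ and selecting the section corresponding to the $\sigma$-fixed tritangent plane, the pullback of $\cL$ has monodromy equal to the full tritangent stabilizer, and projecting to $\mathrm{PGO}_4^+(3)$ supplies the geometric contribution needed.

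Combining the stacky and geometric contributions then produces all of $\mathrm{GO}_4^+(3)$. The coarse statement follows formally: passing from $\cM^{=\mC_2}$ to $\M^{=\mC_2}$ quotients out the stabilizer $\langle\sigma\rangle$ at each point (the mechanism described earlier in the paper), yielding $\Mon_{\F^{=\mC_2}}\cong\mathrm{GO}_4^+(3)/\langle\sigma\rangle\cong\mathrm{PGO}_4^+(3)$.

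The main obstacle will be verifying that the geometric monodromy really attains all of $\mathrm{PGO}_4^+(3)$ rather than a proper subgroup. This requires a careful analysis of $\pi_1(\cM^{=\mC_2})$, most plausibly either by writing an explicit normal form for $\mC_2$-symmetric cubics and computing the parameter-space monodromy directly, or by a degeneration argument along the boundary of $\cM^{=\mC_2}$ in $\cM$ — where $S$ either becomes singular or acquires more automorphisms — enabling the import of monodromy contributions from neighbouring higher-symmetry strata.
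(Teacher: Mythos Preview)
Your overall architecture matches the paper: upper bound by equivariance (centralizer/normalizer of $\sigma$ in $W(E_6)$, equivalently the stabilizer of the distinguished tritangent plane), the central $\mC_2$ from the stacky stabilizer, and the exact sequence $0\to\mC_2\to\Mon_{\cF^{=\mC_2}}\to\Mon_{\F^{=\mC_2}}\to 0$ to pass to the coarse space. All of that is fine.

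The gap is your degree-$45$ cover step. Taking the section of $\widetilde{\cM}\to\cM$ over $\cM^{=\mC_2}$ given by the $\sigma$-fixed tritangent plane simply identifies $\cM^{=\mC_2}$ with a locally closed substack of $\widetilde{\cM}$, and the monodromy of $\cL$ restricted to that substack is, tautologically, $\Mon_{\cF^{=\mC_2}}$ again. You are implicitly using that the monodromy over all of $\widetilde{\cM}$ (which is indeed $\mathrm{GO}_4^+(3)$, since $\widetilde{\cM}\to\cM$ is an intermediate cover) survives restriction to the image of the section; but that is exactly the statement to be proved. So this step is circular and does not produce any new lower bound beyond $\langle\sigma\rangle$.

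Your closing paragraph, however, names precisely the mechanism the paper uses. The paper imports monodromy from higher-symmetry strata in two ways: first, the already-computed $\Mon_{\cF^{=\fS_3\times\mC_2}}\simeq(\fS_3\times\mC_2)^2$ injects via the natural map $\cM^{=\fS_3\times\mC_2}\to\cM_{\mC_2}^{\P\W_1}$ (sending $S$ to $(S,e)$ with $e$ the central Eckardt point), giving a subgroup of order $144$ with no element of order $4$; second, a cubic surface with a single Eckardt point and $\Aut\simeq\mC_8$ contributes an element of order $8$ via $\omega_x$. A short group-theoretic argument inside $\mathrm{GO}_4^+(3)$ (the order-$8$ element cannot square into the order-$144$ subgroup, so the index is at most $2$; then the normal-subgroup lattice of $\mathrm{PGO}_4^+(3)$ finishes it) gives equality. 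So rather than the tritangent-cover trick, you should execute the ``higher-symmetry strata'' idea you already flagged; that is what actually closes the argument.
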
	
	This is proved in Theorem~\ref{thm: mu2 monodromy}. We obtained analogous explicit results for $\G$ equal to $\fS_4$, $\fS_3$ and $\fS_3\times\mC_2$, where $\fS_n$ is the symmetric group of $n$ letters; see Theorems~\ref{thm: S4 monodromy},~\ref{thm: S3 monodromy}, and~\ref{thm: S3xmu2 monodromy}. In particular, we recover the result~\cite[Theorem 1.2]{BR24}. However, we remark that their Hodge-theoretic techniques have other advantages, for example a description of the moduli of $\fS_4$-symmetric cubic surfaces as an arithmetic ball quotient, see~\cite[Theorem 1.1]{BR24}.
	\begin{remark}
		The difference between the two monodromy groups in Theorem~\ref{thm: introduction} reflects the fact that the moduli map $\pi^{=\mC_2}:\cM^{=\mC_2}\rightarrow\M^{=\mC_2}$ is a $\mC_2$-gerbe, and that $\Aut(S)\simeq\mC_2$ acts faithfully on the lines on $S$. Consequently, passing to coarse moduli spaces discards the monodromy contribution arising from the generic stabilizer, information which is retained on the stack.
		Moreover, the central extension
		\[
		\begin{tikzcd}
			0\arrow[r] & \mC_2\arrow[r] & \Mon_{\cF^{=\mC_2}}\arrow[r] & \Mon_{\F^{=\mC_2}}\arrow[r] & 0
		\end{tikzcd}\]
		does not split, implying that the $\mC_2$-gerbe $\pi^{=\mC_2}$ is non-trivial. This highlights how the study of monodromy groups provides information on the underlying gerbes.
		
		The exact sequence above generalizes to any symmetric enumerative problem, providing a strong constraint on the monodromy group by relating it to the Galois group of the original cover. See Corollary~\ref{cor: inclusion in normalizer} for the precise statement. We expect this to be the only constraint in many situations, which would lead to a general formula for solving such problems.
	\end{remark}
	
	When considering restrictions of covers to loci corresponding to objects carrying some symmetries as above, stacks not only represent a useful tool, but also the right framework to work in. For instance, the subscheme $\M^{\mC_2}$ of $\M$ parametrizing surfaces $S$ with $\mC_2\subseteq\Aut(S)$ is not a normal scheme, hence there can be covers of it that are connected but reducible; this is the case for the induced cover $\F^{\mC_2}$. In particular, the monodromy group of the restriction of $\F^{\mC_2}$ to an open subscheme of $\M^{\mC_2}$ over which $\F^{\mC_2}$ is étale can depend on the open chosen, contrarily to what happens in the classical case, see~\cite[Section 1]{Har79}. There are several solutions to this problem, and they seem to require the use of stacks to get a satisfying answer. These are discussed in Example~\ref{exm: G cubic surfaces} and throughout the paper. Notice that $\cF^{\mC_2}$ is étale, hence one could have considered the Galois group of the cover without restricting it to $\cM^{=\mC_2}$. However, it turns out that $\Mon_{\cF^{\mC_2}}=\Mon_{\cF}=W(E_6)$, see Example~\ref{exm: G cubic surfaces}, hence probably too big to be meaningful.
	
	One of the tools we use to compute and make sense of the monodromy in situations such as the above is the introduction of the stack of equivariant objects. Continuing the example above, we consider the stack $\cM_{\mC_2}$ whose objects are pairs $(S,\rho:\mC_2\hookrightarrow\Aut(S))$, where $S$ is a smooth cubic surface and $\rho$ a faithful action of $\mC_2$ on $S$. The stack $\cM_{\mC_2}$ is smooth but highly disconnected, see Proposition~\ref{prop: basic facts about X{G}}. We study its connected components, and the maps $\cM_{\mC_2}\rightarrow\cM^{\mC_2}\rightarrow\M^{\mC_2}$, where $\cM^{\mC_2}\subset\cM$ parametrizes smooth cubics surfaces $S$ with $\mC_2\subseteq\Aut(S)$. The preimage $\cM_{=\mC_2}$ of $\cM^{=\mC_2}$ is connected, and in Lemma~\ref{lem: representation mu2} we characterize its closure $\overline{\cM}_{=\mC_2}$ in $\cM_{\mC_2}$. Then, $\overline{\cM}_{=\mC_2}\rightarrow\M^{\mC_2}$ is a natural resolution of singularities in the category of stacks, and it restricts to an isomorphism $\cM_{=\mC_2}\simeq\cM^{=\mC_2}$. This allows us to obtain $\Mon_{\cF^{=\mC_2}}$ by computing the Galois group of the cover over $\overline{\cM}_{=\mC_2}$. This is worked out in broader generality in Section~\ref{sec: G cubic surfaces}.
	\subsection*{Structure of the Paper}
	In Section~\ref{sec: fundamental group}, we begin by recalling the definition and key properties of the fundamental group for stacks, particularly in Subsections~\ref{subsec: preliminaries on fundamental group} and~\ref{subsec: properties of monodromy}. As an application, Subsection~\ref{subsec: first examples} provides an alternative computation of the monodromy group for the classical enumerative problems of the 27 lines on a cubic surface and the 9 flexes on a plane cubic.
	In the subsequent sections, we develop tools for working with enumerative problems in the presence of prescribed symmetries.
	Specifically, in Section~\ref{sec: G cubic surfaces} we define the stack parametrizing objects with faithful $\G$-actions, explore its properties, and analyze its connected components in the case of cubic surfaces.
	Finally, in Section~\ref{sec: monodromy G cubic surfaces}, we compute the monodromy group for the problem of the 27 lines on a cubic surface, with the additional requirement that the surfaces admit a specified group of symmetries $\G$ between $\fS_4$, $\fS_3$, $\fS_3\times\mC_2$ and $\mC_2$.
	\subsection*{Acknowledgments}
	I am sincerely grateful to Brendan Hassett for his continuous support, valuable guidance, and for encouraging me to pursue this project. I would also like to thank Thomas Brazelton for stimulating conversations on this and related topics, and Sidhanth Raman, whose presentation at Brown University on their joint work was an important source of inspiration for this paper. Finally, I am especially thankful to Dan Abramovich for reading a draft of the paper and offering his insightful feedback.
	
	\section{Fundamental Group and Monodromy of Algebraic Stacks}\label{sec: fundamental group}
	In this section we recall the definition of the fundamental group of an algebraic stack and its main properties. In particular, we study the notion of monodromy of a representable finite étale cover in the general context of algebraic stacks. In the last subsection, we give some first examples where we can apply the general theory to compute monodromy groups of some enumerative problems.
	
	\subsection{Preliminaries on Fundamental Groups of Stacks}\label{subsec: preliminaries on fundamental group}
	The constructions and results of this subsection are mainly due to Behrang Noohi, who extended Grothendieck's definition (\cite{SGA1}) of the étale fundamental group of a pointed scheme to the case of stacks; see~\cite{Noo04}. See~\cite{Zoo01} for an alternative approach. We will be mainly interested in Deligne-Mumford stacks, but the theory works in general, so we will not restrict to that particular class of stacks in this section.
	
	\begin{notation}
		Throughout the section, $\cX$ and $\cY$ will be algebraic stacks, and $\cX$ will be assumed to be connected. Given a geometric point $x:\Spec K\rightarrow\cX$ of $\cX$, the stabilizer group $\G_x$ (or automorphism group) of $\cX$ at $x$ is the fiber product
		\[
		\begin{tikzcd}
			\G_x\arrow[d]\arrow[r] & \Spec K\arrow[d,"{(x,x)}"]\\
			\cX\arrow[r,"\Delta"] & \cX\times\cX.
		\end{tikzcd}
		\]
		Recall that every morphism of stacks is a functor between the underlying categories. In particular, one can consider transformations between two morphism, which are simply called 2-morphisms.
	\end{notation}
	The main difference with the case of schemes is the presence of `hidden loops', which are topologically trivial loops induced by the automorphism groups of points. If the stack of interest is a moduli stack parametrizing families of some geometric object, the hidden paths correspond to isotrivial families.
	
	\begin{definition}[{\cite[Definitions 3.1, 3.3]{Noo04}}]
		Let $x,x':\Spec K\rightarrow\cX$ be two geometric points. A hidden path from $x$ to $x'$ is a 2-morphism from $x$ to $x'$. The hidden fundamental group $\pi_1^h(\cX,x)$ of $\cX$ at $x$ is the group of self-transformations of $x$, that is, the group of 2-automorphisms of $x$. The hidden fundamental groupoid $\Pi_1^h(\cX)$ of $\cX$ is the groupoid with objects the geometric points of $\cX$ and with arrows the 2-morphisms between them.
		
		A pointed map is a pair $(f,\phi):(\cY,y)\rightarrow(\cX,x)$ where $f:\cY\rightarrow\cX$ is a morphism and $\phi$ is an hidden path from $x$ to $f(y)$. A (pointed) covering map is a (pointed) representable, finite, étale morphism from a connected scheme. Usually, we denote a pointed map simply by $f$, instead of the whole pair $(f,\phi)$.
	\end{definition}
	\begin{remark}\label{rmk: group scheme structure of hidden fundamental group}
		Notice that every pointed map $(f,\phi):(\cY,y)\rightarrow(\cX,x)$ induces a morphism between hidden fundamental groups. Moreover, it is possible to identify $\pi_1^h(\cX,x)$ with the $K$-points of the stabilizer group $\G_x$ of $\cX$ at the $K$-point $x$, thus endowing $\pi_1^h(\cX,x)$ with a group scheme structure; see~\cite[Proposition 3.4]{Noo04}.
	\end{remark}
	Given a pointed connected stack $(\cX,x)$, Noohi defines the fundamental group $\pi_1(\cX,x)$ as follows. First, he considers the Galois category of representable, finite, étale covers of $\cX$ by algebraic stacks, and the fiber functor $F_x$ sending a cover to its fiber over $x$, similarly to the classical case of schemes. More precisely, if $f:\cY\rightarrow\cX$ is a covering map, then $F_x(\cY)$ is the set of isomorphism classes of pairs $(y,\phi)$, where $y$ is a geometric point of $\cY$ and $\phi$ an hidden path from $x$ to $f(y)$; two pairs $(y,\phi)$ and $(y',\phi')$ are considered isomorphic if there is an hidden path from $y$ to $y'$ compatible with $\phi$ and $\phi'$. The main difference with the case of schemes is the necessity to include possible 2-morphisms between geometric points into the definition; see~\cite[Section 4]{Noo04} for details and the rigorous definitions. Then, the fundamental group $\pi_1(\cX,x)$ of $\cX$ at $x$ is defined as the group of self-transformations of the fiber functor $F_x$. Finally, the fundamental groupoid $\Pi_1(\cX)$ is the groupoid whose objects are geometric points of $\cX$ and the arrows are transformations of fiber functors $F_x\rightarrow F_{x'}$.
	The construction is covariant with respect to pointed maps.
	
	\begin{remark}\label{rmk: fundamental correspondence}
		It is again true that $\pi_1(\cX,x)$ is a profinite group, and the correspondence between its open subgroups and isomorphisms classes of pointed covers still holds. Given a pointed covering map $(\cY,y)\rightarrow(\cX,x)$, the associated open subgroup of $\pi_1(\cX,x)$ is the image of $\pi_1(\cY,y)\rightarrow\pi_1(\cX,x)$.
	\end{remark}
	Remark~\ref{rmk: fundamental correspondence} allows us to define the monodromy group of a cover.
	\begin{definition}\label{def: monodromy for stacks}
		Let $f:(\cY,y)\rightarrow(\cX,x)$ a connected pointed covering map, and let $H_f$ be the associated open subgroup of $\pi_1(\cX,x)$, that is, the image under the pushforward map
		\[
		\begin{tikzcd}
			\pi_1(\cY,y)\arrow[r] & \pi_1(\cX,x).
		\end{tikzcd}
		\]
		Let $N_f$ be the largest normal subgroup of $\pi_1(\cX,x)$ contained in $H_f$. This is the finite intersection of conjugates of $H_f$, hence open and of finite index in $\pi_1(\cX,x)$. We define the monodromy group $\Mon_f$ of $f$ as the quotient
		\begin{equation}
			\Mon_f:=\pi_1(\cX,x)/N_f.
		\end{equation}
		Moreover, we define the Galois closure of $f$ to be the pointed covering map
		\[
		\begin{tikzcd}
			\widetilde{f}:(\widetilde{Y},\widetilde{y})\arrow[r] & (\cX,x)
		\end{tikzcd}
		\]
		corresponding to $N_f$. If $\widetilde{\cY}=\cY$, we say that $f$ is a Galois cover with Galois group $\Mon_f$.
	\end{definition}
	\begin{remark}\label{rmk: Galois covers are torsors}
		A Galois cover $f$ is a $\Mon_f$-torsor, see~\cite[\href{https://stacks.math.columbia.edu/tag/03SF}{Tag 03SF}]{Sta25}.
	\end{remark}
	\begin{remark}
		The definition given here is too abstract to allow computations of monodromy groups of covers. One better way of visualizing the monodromy group is through its natural action on the fiber of the covering, which however can be tricky in the case of stacks. Indeed, contrary to the case of schemes, if $f:Y\rightarrow X$ is a Galois cover with Galois group $G$, then the action of $G$ on $Y$ is not necessarily faithful, as Example~\ref{exm: fundamental group classifying stack} shows; however, it is clearly faithful on the scheme-theoretic fibers. In Subsection~\ref{subsec: properties of monodromy}, we give a more concrete interpretation of the monodromy group.
	\end{remark}
	A key feature of the fundamental group for stacks is the existence for every geometric point $x$ in $\cX$ of a natural morphism
	\begin{equation}\label{eq: map omegax}
		\begin{tikzcd}
			\omega_x:\pi_1^h(\cX,x)\arrow[r] & \pi_1(\cX,x),
		\end{tikzcd}
	\end{equation}
	which comes from a morphism of groupoids $\Pi_1^h(\cX)\rightarrow\Pi_1(\cX)$. The map~\eqref{eq: map omegax} is defined by mapping each hidden loop $\gamma$ at $x$ to the self-transformation $F_x\rightarrow F_x$ sending a pair $(y,\phi)$ to $(y,\gamma^{-1}\phi)$; see~\cite[Section 4]{Noo04} for the details.
	\begin{example}[{\cite[Example 4.2]{Noo04}}]\label{exm: fundamental group classifying stack}
		Let $\cX=B\G=[\Spec k/G]$ be the classifying stack of an algebraic group $\G$ of finite type over an algebraically closed field $k$, with geometric point the projection $x:\Spec k\rightarrow B\G$. Then, the hidden fundamental group is $\pi_1^h(B\G,x)\simeq\G$, while
		\begin{equation}
			\Ima(\omega_x)=\pi_1(B\G,x)=\G/\G^o,
		\end{equation}
		where $\G^o$ is the connected component of the identity in $\G$. In particular, if $\G$ is a finite étale group scheme over $k$, then $\pi_1(B\G)\simeq\G$, and $\Spec k\rightarrow B\G$ is the universal cover. In this case, the monodromy group of the universal cover $\Spec k\rightarrow B\G$ is $\G$, and it acts trivially on the total space.
	\end{example}
	\begin{remark}\label{rmk: finite image omegax}
		Suppose $\cX$ is quasi-separated and of finite type over a field $k$. Then, the above example shows that for every geometric point $x$ of $\cX$ the image of $\omega_x$ is finite, as that map factors through the fundamental group of the residue gerbe $B\G_x$ at $x$, which the example showed to be finite.
	\end{remark}
	
	The map $\omega_x$ will be one of our main tools to generate elements in the monodromy group of a fixed covering. In order to do so effectively, we need to understand what are the source and kernel of $\omega_x$.	
	We know that the source is $\pi_1^h(\cX,x)\simeq\G_x$, the stabilizer group of $\cX$ at $x$, see Remark~\ref{rmk: group scheme structure of hidden fundamental group}. In practice, we are usually able to show directly that $\G_x$ acts faithfully on the fibers of some cover $f$, so that we get the stronger claim that $\G_x$ injects into $\Mon_f$.
	\begin{remark}
		In~\cite[Corollary 5.4, Theorem 7.3]{Noo04}, the author gives both local and global criteria for when the map $\omega_x$ is injective. When $\cX$ is a Deligne-Mumford stack, this holds if and only if there exists a covering map $f:\cY\rightarrow\cX$ that is a uniformization locally around $x$, that is, $\cY$ is a scheme over $x$. Moreover, $\cX$ admits a global uniformization if and only if $\omega_x$ is injective for every $x$.
	\end{remark}
	
	Now, we are interested in comparing the fundamental group of an algebraic stack $\cX$ with the fundamental group of its moduli space $X$, intended in the broad sense as in~\cite[Definition 7.1]{Noo04}, whenever this exists.
	
	Suppose given a connected Deligne-Mumford stack $\cX$ with a coarse moduli space $\pi_{\cX}:\cX\rightarrow X$, and let $g:Y\rightarrow X$ be a finite, étale cover between connected algebraic spaces. Then, one can construct the fiber product
	\[
	\begin{tikzcd}
		\cY\arrow[d,"f"]\arrow[r,"\pi_{\cY}"] & Y\arrow[d,"g"]\\
		\cX\arrow[r,"\pi_{\cX}"] & X.
	\end{tikzcd}
	\]
	We know that $f$ is representable, finite, étale of the same degree as $g$, $\pi_{\cY}$ is a coarse moduli space, and $\cY$ is connected as $\pi_{\cY}$ is an homeomorphism. Notice that the monodromy group $\Mon_g$ of $g$ acts on the fibers of $f$ in a natural way. If $Y\rightarrow X$ is a Galois cover with Galois group $\G$ then the same holds for $f$, and the action of $G$ on $\cY$ is free. In particular, $\Mon_f=\Mon_g$.
	
	In general, not every cover $f:\cY\rightarrow\cX$ is of this form. For instance, for every finite étale group $\G$ over $k$, the cover $\Spec k\rightarrow B\G$ is Galois with group $\G$, but $\G$ acts trivially on the total space; see Example~\ref{exm: fundamental group classifying stack}. The point is that in this case $f$ does not induce an isomorphism between the stabilizers; it turns out that this is the only obstruction.
	\begin{definition}[{\cite[Definition 7.5]{Noo04}}]\label{def: FPR covers}
		We say that a representable, finite, étale cover $f:\cY\rightarrow\cX$ is fixed point reflecting, or FPR, if it induces an isomorphism between the geometric stabilizers of every corresponding geometric point.
	\end{definition}
	The following Proposition is a direct consequence of~{\cite[Proposition 7.8, Theorem 7.11]{Noo04}}.
	\begin{proposition}\label{prop: correspondence FPR}
		Let $\cX$ be a connected algebraic stack with a moduli space $\pi_{\cX}:\cX\rightarrow X$. There is a natural equivalence between the Galois category of FPR covers of $\cX$ and the Galois category of covers of its moduli space $X$, sending an FPR cover of $\cX$ to the induced map between moduli spaces, with inverse given by pullback along $\pi_{\cX}$. This correspondence sends connected (respectively, Galois) covers to connected (respectively, Galois) covers, and induces an isomorphisms between the corresponding monodromy groups.
	\end{proposition}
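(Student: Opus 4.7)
The plan is to invoke directly Noohi's results cited in the statement, namely~\cite[Proposition 7.8, Theorem 7.11]{Noo04}, which already establish the equivalence of Galois categories between FPR covers of $\cX$ and covers of its moduli space $X$, implemented by the coarse-moduli and pullback functors. What remains for me to verify is that this equivalence preserves connectedness, the Galois property, and identifies the monodromy groups.

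First, I would spell out the two functors carefully. For a cover $g:Y\rightarrow X$, the pullback $f:\cY\rightarrow\cX$ along $\pi_{\cX}$ is automatically FPR: at a geometric point $(\tilde x,y)$ of $\cY=\cX\times_X Y$, the stabilizer sits in a fiber product in which the factor corresponding to $Y$ has trivial inertia, forcing $\G_{(\tilde x,y)}\simeq\G_{\tilde x}$. Conversely, for an FPR cover $f:\cY\rightarrow\cX$, one takes the coarse moduli space $\pi_{\cY}:\cY\rightarrow Y$ and produces an induced map $\bar f:Y\rightarrow X$. The FPR hypothesis, combined with the local structure theorem for Deligne--Mumford stacks (étale-locally $\cX\simeq[U/\G_{\tilde x}]$), implies that $f$ is étale-locally the pullback of $\bar f$, so $\bar f$ is itself finite étale, and in fact $\cY\simeq\cX\times_X Y$. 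This local descent is the step I expect to cost the most effort, and it is exactly what Noohi sets up in his Theorem 7.11.

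Second, I would check that connectedness is preserved in both directions. Since $\pi_{\cY}:\cY\rightarrow Y$ is a homeomorphism on underlying topological spaces, $\cY$ is connected if and only if $Y$ is, which handles the moduli-space direction. In the pullback direction, one uses the same homeomorphism property for $\pi_{\cX}$ to transfer connectedness of $Y$ to connectedness of $\cX\times_X Y$.

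Finally, to match the Galois structure and the monodromy groups, I would argue as follows. An equivalence of Galois categories automatically preserves automorphism groups of objects, hence Galois covers correspond to Galois covers with the same Galois group. For the monodromy groups of a possibly non-Galois connected cover, I would use Remark~\ref{rmk: fundamental correspondence}: the cover $f$ corresponds to an open subgroup $H_f\subseteq\pi_1(\cX,x)$, and the equivalence of Galois categories induces an isomorphism $\pi_1(\cX,x)\simeq\pi_1(X,\bar x)$ sending $H_f$ to $H_{\bar f}$. Normal cores are preserved by any isomorphism of profinite groups intertwining the relevant subgroups, so $N_f$ corresponds to $N_{\bar f}$, giving $\Mon_f\simeq\Mon_{\bar f}$ as in Definition~\ref{def: monodromy for stacks}. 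Equivalently, the Galois closure of $f$ is the pullback of the Galois closure of $\bar f$, and one reads off the isomorphism of monodromy groups from Remark~\ref{rmk: Galois covers are torsors}.
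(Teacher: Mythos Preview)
Your approach matches the paper's, which simply records the proposition as a direct consequence of Noohi's~\cite[Proposition 7.8, Theorem 7.11]{Noo04} without further elaboration; your added verification of connectedness and of the Galois and monodromy claims is a reasonable expansion of that citation.

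There is, however, one incorrect step. You assert that ``the equivalence of Galois categories induces an isomorphism $\pi_1(\cX,x)\simeq\pi_1(X,\bar x)$''. This is false in general: for $\cX=B\G$ with $\G$ a nontrivial finite group one has $\pi_1(\cX,x)\simeq\G$ while $X=\Spec k$ has trivial fundamental group (Example~\ref{exm: fundamental group classifying stack}). The equivalence at hand is between the category of \emph{FPR} covers of $\cX$ and the category of \emph{all} covers of $X$; the former is only a full subcategory of the Galois category of $\cX$, and the profinite group it determines is the quotient $\pi_1(X,\bar x)$ of $\pi_1(\cX,x)$ furnished by~\cite[Theorem 7.11]{Noo04}, not $\pi_1(\cX,x)$ itself. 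So one cannot transport $H_f$ and $N_f$ through an isomorphism of fundamental groups as you describe.

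Your final sentence already contains the correct repair. The Galois closure of an FPR cover $f$ is again FPR: at a point $(y_1,\ldots,y_n)$ of $\cY^{(n)}$ the stabilizer is $\bigcap_i\G_{y_i}\subseteq\G_x$, and each $\G_{y_i}=\G_x$ by hypothesis. Hence under the equivalence the Galois closure of $f$ corresponds to the Galois closure of $\bar f$, and since an equivalence of categories preserves automorphism groups of objects, $\Mon_f\simeq\Aut_{\cX}(\widetilde{\cY})\simeq\Aut_X(\widetilde{Y})\simeq\Mon_{\bar f}$. Lead with that argument and drop the claimed isomorphism of fundamental groups.
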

	
	Morally, we might expect that every cover of an algebraic stack $\cX$ is a combination of FPR covers and covers of classifying stacks; this is true in some precise sense, as we will see in the next subsection.
	
	In the following case, it is particularly easy to compare the fundamental group of $\cX$ with that of its moduli space, which is presented in~\cite{Noo04} with the language of monotonous gerbes; see~\cite[Definition 9.1]{Noo04} for the definition.
	Recall that a fppf gerbe $\pi:\cX\rightarrow X$ is equivalently a smooth morphism admitting an flat, finitely-represented, surjective morphism $S\rightarrow X$ such that the pullback of $\pi$ becomes $\cX\times_XS\simeq B\G\rightarrow S$ for some fppf group scheme $G\rightarrow S$.
	\begin{proposition}[{\cite[Propositions 10.3, 10.4, Corollary 10.5]{Noo04}}]\label{prop: monotonous gerbe case}
		Let $\cX$ be a connected algebraic stacks whose moduli map $\pi_{\cX}:\cX\rightarrow X$ is an fppf gerbe, and let $f:\cY\rightarrow\cX$ be a connected covering map. Then, also $\cY$ is an fppf gerbe over its moduli space, and $f$ is FPR if and only if it is FPR at some geometric point $y$ of $\cY$. Moreover, for every geometric point $x$ of $\cX$, the image of $\omega_x$ is normal in $\pi_1(\cX,x)$, and its quotient is the fundamental group of $X$.
	\end{proposition}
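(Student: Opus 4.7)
The plan is to treat the three assertions in order, leveraging the local description of an fppf gerbe as a classifying stack. For part~(1), I would choose an fppf surjection $S\to X$ such that $\cX\times_XS\simeq B\G_S$ for some fppf group scheme $\G\to S$. The base change of $f$ is then a representable finite étale cover $\cY\times_XS\to B\G_S$, and every such cover is of the form $[T/\G]\to B\G_S$ for some finite étale $T\to S$ carrying a $\G$-action. In particular $\cY\times_XS\to T$ is visibly an fppf gerbe, and this structure descends via fppf descent to equip $\cY$ with a gerbe structure over its moduli space $Y$.

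For part~(2), since $f$ is representable, the induced stabilizer map $\G_y\to\G_{f(y)}$ is always a closed immersion of finite group schemes; whether it is an isomorphism is detected by its degree, which is locally constant on $\cY$. Hence the FPR locus in $\cY$ is open and closed, and connectedness forces it to be either empty or all of $\cY$. In the local model $[T/\G]\to B\G_S$ from part~(1), FPR translates to triviality of the $\G$-action on $T$, which is manifestly a connected-component condition.

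The substantive content lies in part~(3). By Proposition~\ref{prop: correspondence FPR}, pullback along $\pi_{\cX}$ induces an equivalence between the Galois category of covers of $X$ and the Galois category of FPR covers of $\cX$, which yields a surjection $\pi_1(\cX,x)\twoheadrightarrow\pi_1(X,\pi_{\cX}(x))$ whose kernel $K$ is the intersection of the open subgroups classifying FPR covers. This kernel is automatically normal with quotient $\pi_1(X)$, so it suffices to identify $K$ with $\Ima(\omega_x)$. Using the explicit description of $\omega_x$ recalled in the excerpt, a hidden loop $\gamma\in\pi_1^h(\cX,x)$ acts on $F_x(\cY)$ by $(y,\phi)\mapsto(y,\gamma^{-1}\phi)$; this action is trivial exactly when $\gamma$ lies in the image of $\pi_1^h(\cY,y)\to\pi_1^h(\cX,x)$ at every point $y$ above $x$, which by part~(2) amounts to the FPR condition for $f$. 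Consequently $\Ima(\omega_x)$ lies in every open subgroup corresponding to an FPR cover and in no others, whence $K=\Ima(\omega_x)$.

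The central obstacle will be this final identification, where the gerbe hypothesis is genuinely required: part~(2) is what permits testing the FPR condition at the single basepoint $x$ defining $\omega_x$ rather than globally. Without the gerbe assumption, a cover can be FPR at some points and not at others within the same connected component --- precisely the pathology driving the introduction of the stack $\cM_{\mC_2}$ in the body of the paper.
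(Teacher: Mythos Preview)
The paper does not supply its own proof of this proposition: it is stated with attribution to \cite[Propositions~10.3, 10.4, Corollary~10.5]{Noo04} and no argument is given. So there is nothing to compare your proposal against in the paper itself; what follows is an assessment of your sketch on its own merits.

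Your outline for parts~(2) and~(3) is sound. The locally-constant-degree argument for~(2) is correct, and your strategy for~(3)---identifying the kernel of $\pi_1(\cX,x)\to\pi_1(X)$ with the intersection of the open subgroups attached to FPR covers, then showing this coincides with $\Ima(\omega_x)$ via the explicit action of hidden loops---is the right idea and uses~(2) in the essential way you describe. One small technical point: your argument shows that $\Ima(\omega_x)$ and the kernel $K$ are contained in exactly the same open subgroups, which in a profinite group identifies $K$ with the \emph{closure} of $\Ima(\omega_x)$; you have not argued that the image is itself closed. In the applications later in the paper this is harmless (Remark~\ref{rmk: finite image omegax} gives finiteness of the image under finite-type hypotheses), but it is worth flagging.

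There is, however, a genuine slip in part~(1). You write that ``$\cY\times_XS\to T$ is visibly an fppf gerbe,'' but there is no such map: $T$ covers $[T/\G]$, not the other way around. The moduli space of $[T/\G]$ is the quotient $T/\G$, and what you must show is that $[T/\G]\to T/\G$ is a gerbe. This is not automatic---it amounts to the stabilisers of the $\G$-action on $T$ being locally constant, which is precisely the content you develop in part~(2). So the logical order should be reversed (or the two parts proved together): first establish that the stabiliser groups $\G_y\hookrightarrow\G_{f(y)}$ have locally constant order on $\cY$, and then conclude both that the FPR locus is open and closed and that $\cY$ is a gerbe over its moduli space. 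As written, part~(1) asserts the conclusion without supplying the mechanism.
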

	\begin{remark}\label{rmk: generic gerbe}
		By generic flatness, every reduced Noetherian algebraic stack admits an open substack of the form above. See~\cite[Sections 6.4.2, 6.4.3]{Alp25} for technical facts on gerbes.
	\end{remark}
	We conclude the subsection by extending the definition of monodromy group to covers that are not necessarily connected.
	\begin{definition}\label{def: monodromy for non connected covers}
		Let $f:\cY\rightarrow\cX$ be a representable, finite, étale cover
		of an irreducible stack $\cX$, with $\cY$ not necessarily connected. Let $\cY_1,\ldots,\cY_r$ be the connected components, with induced maps $f_i:\cY_i\rightarrow\cX$. Let $x$ be a geometric point of $\cX$, and for every $i$ fix a geometric point $y_i$ such that $f_i(y_i)=x_i$. Recall that $N_{f_i}$ is the largest normal subgroup of $\pi_1(\cX,x)$ contained in the image of $f_{i*}:\pi_1(\cY_i,y_i)\rightarrow\pi_1(\cX,x)$. Let $N_f=\cap_i N_{f_i}$. We define the monodromy group $\Mon_f$ of $f$ as
		\[
		\Mon_f=\pi_1(\cX,x)/N_f.
		\]
	\end{definition}
	\begin{remark}
		The subgroup $N_f$ of $\pi_1(\cX,x)$ is clearly normal, so the quotient makes sense. Notice that $N_f$ can be equivalently described as the largest normal subgroup contained in the image of every $f_{i*}$.
	\end{remark}
	\subsection{Properties of Monodromy}\label{subsec: properties of monodromy}
	In this subsection, we show that the monodromy group of a cover between irreducible stacks can be computed after restricting the cover to any non-empty open substack. To do so, we give an equivalent definition of monodromy for algebraic stacks, that is well known in the case of schemes.
	In this subsection, we will always assume $\cX$ to be irreducible.
	
	Let $f:\cY\rightarrow\cX$ be a representable, finite, étale cover of degree $n$, between irreducible stacks. Let $\cY^{(n)}$ be the complement of the big diagonal in the $n$-fold product $\cY\times_{\cX}\ldots\times_{\cX}\cY$. Notice that the symmetric group $\fS_n$ acts on $\cY^{(n)}$ by permuting the components, leaving the projection $f^{(n)}:\cY^{(n)}\rightarrow\cX$ invariant.
	\begin{lemma}\label{lem: equidimensionality}
		The stack $\cY^{(n)}$ is equidimensional, and $\fS_n$ acts transitively on the set of its irreducible components.
	\end{lemma}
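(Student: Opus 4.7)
The plan is to read both claims directly off the structure of the finite étale map $f^{(n)}:\cY^{(n)}\to\cX$, so the first step is to verify that $f^{(n)}$ really is finite étale of degree $n!$. Since $f$ is unramified, each pairwise diagonal $\Delta_{ij}\subseteq\cY\times_{\cX}\cdots\times_{\cX}\cY$ is an open immersion, and it is also closed, hence the big diagonal is clopen. Removing it exhibits $\cY^{(n)}$ as a clopen substack of the iterated fibre product, and $f^{(n)}$ as finite étale of degree $n!$.

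For equidimensionality, I would take an arbitrary irreducible component $Z\subseteq\cY^{(n)}$ and observe that $f^{(n)}(Z)$ is clopen in $\cX$, because $f^{(n)}$ is simultaneously open (by étaleness) and closed (by finiteness); the assumed irreducibility of $\cX$ then forces $f^{(n)}(Z)=\cX$, and since $f^{(n)}|_Z$ is a finite surjection one obtains $\dim Z=\dim\cX$. As $Z$ was arbitrary, this settles equidimensionality.

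For the transitivity of the $\fS_n$-action, I would fix a geometric generic point $\bar\eta\to\cX$. The fibre $(f^{(n)})^{-1}(\bar\eta)$ is naturally the set of ordered $n$-tuples of pairwise distinct elements of $f^{-1}(\bar\eta)$, a set of cardinality $n!$ on which $\fS_n$ acts simply transitively by permutation. The argument then runs as follows: each irreducible component $Z$ of $\cY^{(n)}$ has a generic point lying over $\eta$ by the previous paragraph, so any geometric lift of it over $\bar\eta$ is a point of $(f^{(n)})^{-1}(\bar\eta)$ contained in $Z$; conversely every geometric point over $\bar\eta$ lies in a unique irreducible component, namely the closure of its image in $\cY^{(n)}$. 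Since $\fS_n$ commutes with $f^{(n)}$ it permutes the components of $\cY^{(n)}$, and simple transitivity on the fibre promotes to transitivity on the set of components.

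The point most worth double-checking is that the scheme-theoretic reflexes invoked above—generic point of an irreducible component, lifting to a geometric generic point, open/closed behaviour of images—carry over to algebraic stacks as stated; this should follow by base-changing to a smooth atlas of $\cX$, since representability and finite étaleness of $f^{(n)}$ ensure that irreducible components, their generic points, and their images descend cleanly.
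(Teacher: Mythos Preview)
Your proof is correct and follows essentially the same approach as the paper's: both argue that every irreducible component of $\cY^{(n)}$ surjects onto $\cX$ because $f^{(n)}$ is open and closed, and then use that $\fS_n$ acts (simply) transitively on a fibre to deduce transitivity on components. Your version is simply more detailed, spelling out why $f^{(n)}$ is finite étale, working explicitly with the geometric generic fibre, and flagging the passage to an atlas for the stack-theoretic subtleties, whereas the paper compresses all of this into two sentences and a reference.
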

	\begin{proof}
		Notice that $\fS_n$ acts transitively on the fiber of $f^{(n)}$, hence it is enough to show that every irreducible component $\cZ$ of $\cY^{(n)}$ surjects onto $\cX$. However, this is clear as $f$ is both open and closed. See also~\cite[Section 1.1]{SY21}.
	\end{proof}
	\begin{remark}
		In particular, Lemma~\ref{lem: equidimensionality} shows that the stabilizer in $\fS_n$ of an irreducible component $\cZ$ of $\cY^{(n)}$ is independent of the component up to conjugation.
	\end{remark}
	We are ready to give a more geometric interpretation of the monodromy group of a covering map.
	\begin{proposition}\label{prop: main equivalence monodromy}
		Let $\cZ$ be an irreducible component of $\cY^{(n)}$, and let $\G$ be the subgroup of $\fS_n$ preserving $\cZ$. Then, $f^{(n)}:\cY^{(n)}\rightarrow\cX$ is a $\fS_n$-torsor, and the induced morphism $f_{\cZ}^{(n)}:\cZ\rightarrow\cX$ is a Galois closure of $f$, with Galois group $\G$. In particular, $\G=\Mon_f$.
	\end{proposition}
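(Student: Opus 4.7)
The plan is to first verify that $f^{(n)}$ is an $\fS_n$-torsor, then descend to the irreducible component $\cZ$ to see that $f_\cZ^{(n)}$ is a connected $\G$-torsor (hence Galois with Galois group $\G$ by Remark~\ref{rmk: Galois covers are torsors}), and finally identify $\G$ with $\Mon_f$ via the natural projection $\cZ\to\cY$ combined with a kernel computation for the monodromy representation.

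For the first step, I take an étale atlas $U\to\cX$; since $f$ is representable, finite and étale, after further étale localization on $U$ the pullback $\cY\times_\cX U\to U$ trivializes as $\sqcup_{i=1}^n U\to U$, and then the pullback of $f^{(n)}$ becomes $\sqcup_{\sigma\in\fS_n}U\to U$, with $\fS_n$ acting by left translation. By descent this shows $f^{(n)}$ is an étale $\fS_n$-torsor. By Lemma~\ref{lem: equidimensionality}, $\fS_n$ acts transitively on the components of $\cY^{(n)}$, so $\cY^{(n)}=\bigsqcup_{[\sigma]\in\fS_n/\G}\sigma\cdot\cZ$. The induced $\G$-action on $\cZ$ is free (inherited from the torsor), and a simple count shows that each geometric fiber of $f_\cZ^{(n)}$ has $n!/[\fS_n:\G]=|\G|$ points and is therefore a single $\G$-orbit; thus $f_\cZ^{(n)}$ is a connected $\G$-torsor, hence a Galois cover with Galois group $\G$.

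To see that $\cZ\to\cX$ is the Galois closure of $f$, I need to show that the open subgroup $H_\cZ\subseteq\pi_1(\cX,x)$ associated to $\cZ$ equals $N_f$. First, restricting the projection $\cY^{(n)}\to\cY$ onto the first factor yields a morphism $\cZ\to\cY$ over $\cX$, giving $H_\cZ\subseteq H_f$ up to conjugation, and $H_\cZ$ is normal because $\cZ\to\cX$ is Galois. For maximality, I identify $(f^{(n)})^{-1}(x)$ with the set of ordered $n$-tuples of distinct elements of $f^{-1}(x)\cong\pi_1(\cX,x)/H_f$; the diagonal $\pi_1(\cX,x)$-action on this set has stabilizer at any point equal to the intersection of all conjugates of $H_f$, which is exactly $N_f$. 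Since the connected components of $\cY^{(n)}$ correspond bijectively to $\pi_1(\cX,x)$-orbits on this fiber, each orbit has size $[\pi_1(\cX,x):N_f]$ and corresponds to the subgroup $N_f$. Therefore $H_\cZ=N_f$ and $\G\simeq\pi_1(\cX,x)/N_f=\Mon_f$.

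The main obstacle is the étale-local triviality step in the stack setting, which requires the representability hypothesis on $f$ in order to reduce to the standard fact for finite étale covers of schemes, and the combinatorial bookkeeping matching the $\fS_n$-stabilizer of a component with the kernel of the monodromy representation; once these are in place, all three conclusions of the proposition fall out together.
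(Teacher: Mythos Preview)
Your argument is correct and matches the paper for the first two claims: both verify the $\fS_n$-torsor property by \'etale (or fppf) local trivialization, and both pass from $\cY^{(n)}$ to $\cZ$ via the transitivity of the $\fS_n$-action on components (Lemma~\ref{lem: equidimensionality}) to conclude that $f_\cZ^{(n)}$ is a $\G$-torsor. The divergence is in identifying $\cZ$ with the Galois closure $\widetilde{\cY}$. The paper argues geometrically: since $f_\cZ^{(n)}$ is Galois and factors through $\cY$ via the first projection, minimality of the Galois closure yields a map $g:\cZ\to\widetilde{\cY}$ over $\cX$, and a section of $g$ is then exhibited by decomposing the $n$-fold self-product of $\widetilde{\cY}$ over $\cX$ as $\widetilde{\cY}\times(\Mon_f)^{n-1}$ and noting that each irreducible component of the preimage of $\cY^{(n)}$ projects isomorphically to $\widetilde{\cY}$. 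You instead compute directly that the $\pi_1(\cX,x)$-stabilizer of any point of the fiber $(f^{(n)})^{-1}(x)$ equals $\bigcap_\gamma \gamma H_f\gamma^{-1}=N_f$, and then read off $H_\cZ=N_f$ from the Galois correspondence. Your route is shorter and stays entirely within the fiber-functor formalism; the paper's route produces the isomorphism $\cZ\simeq\widetilde{\cY}$ explicitly, without invoking the identification of $f^{-1}(x)$ with $\pi_1(\cX,x)/H_f$.

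One small caveat, shared with the paper: your fiber count ``$n!/[\fS_n:\G]=|\G|$'' and the paper's identification $[\cY^{(n)}/\fS_n]\simeq[\cZ/\G]$ both implicitly use that the irreducible components of $\cY^{(n)}$ are pairwise disjoint, i.e.\ coincide with its connected components. This is automatic when $\cX$ is normal (the case in all later applications) but is worth a word in general, since your final step invokes the $\pi_1$-correspondence for \emph{connected} components while $\cZ$ was only assumed irreducible.
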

	\begin{proof}
		To verify that $f^{(n)}$ is a $\fS_n$-torsor we can work fppf-locally on the base, hence we can reduce to the case where $\cX$ is a scheme, where it is clear. In particular,
		\[
		\cX\simeq[\cY^{(n)}/\fS_n]\simeq[\cZ/\G],
		\]
		thus $f_{\cZ}^{(n)}$ is a Galois cover with Galois group $\G$. Since $f_{\cZ}^{(n)}$ factors through $\cY\rightarrow\cX$, we get an induced map
		\[
		\begin{tikzcd}
			g:\cZ\arrow[r] & \widetilde{\cY}
		\end{tikzcd}
		\]
		with $\widetilde{f}\circ g=f_{\cZ}^{(n)}$, where $\widetilde{f}:\widetilde{\cY}\rightarrow\cX$ is the Galois closure of $f$. To conclude, it is enough to show that $g$ is an isomorphism. Since $g$ is a cover and $\cZ$ is irreducible, it is enough to find a section to it. The cover $h:\widetilde{\cY}\rightarrow\cY$ induces a surjection $h^n:\widetilde{\cY}\times_{\cX}\ldots\times_{\cX}\widetilde{\cY}\rightarrow\cY\times_{\cX}\ldots\times_{\cX}\cY$. Let $\cU:=(h^n)^{-1}(\cY^{(n)})$, and denote by $h^{(n)}$ the restriction of $h^n$ to $\cU$.
		Since $\cX\simeq[{\widetilde{\cY}}/\Mon_f]$, we have that
		\[
		\widetilde{\cY}\times_{\cX}\ldots\times_{\cX}\widetilde{\cY}\simeq\widetilde{\cY}\times(\Mon f)^{n-1},
		\]
		where $(\widetilde{y},g_1,\ldots,g_{n-1})\in\widetilde{\cY}\times(\Mon f)^{n-1}$ is sent to $(\tilde y, g_1 \cdot \tilde y,\ldots, g_{n-1}\cdot \tilde y)$.
		Therefore, $\cU\simeq\widetilde{\cY}\times W$,
		where $W$ is a (discrete) subset of $(\Mon_f)^{n-1}$. In particular, every irreducible component of $\cU$ is isomorphic to $\widetilde{\cY}$ under any projection. Taking a copy of $\widetilde{\cY}$ in $\widetilde{\cY}^{(n)}$ that maps to $\cZ$ under $h$, this yields the desired section.
	\end{proof}
	\begin{example}\label{exm: interesting combinatorial example}
		The simplest non-schematic example to which we can apply the previous proposition is the $\G$-Galois cover $f:\cY=\Spec k\rightarrow B\G=\cX$, where $\G$ is a finite étale group scheme of order $n$ over an algebraically closed field $k$. It is possible to show that $\cY^{(n)}$ is
		isomorphic to the quotient $G^{(n)}/G$ of the subset $\G^{(n)}$ of $\G^{n}$ consisting of $n$-tuples of distinct elements, by the diagonal action of $G$. The action of $\fS_n$ is by permutation of the order of the elements. It follows that $\cY^{(n)}$ is simply the union of $(n-1)!$ points, and Proposition~\ref{prop: main equivalence monodromy} implies that the stabilizer in $\fS_n$ of a point in $\cY^{(n)}$ is isomorphic to $\G$. This can be seen directly from the description above. Indeed, a choice of a point in $\G^{(n)}/\G$ and a lift of it $(g_1,\ldots,g_n)\in\G^{(n)}$ induces an injection $\G\hookrightarrow\fS_n$, where $g\mapsto\sigma_g$ such that $gg_i=g_{\sigma_g(i)}$. By construction, the image $\G\subset\fS_n$ is equal to the stabilizer of $[g_1,\ldots,g_n]\in\G^{(n)}/\G$ under the $\fS_n$-action.
	\end{example}
	\begin{corollary}\label{cor: independence open substack}
		Let $f:\cY\rightarrow\cX$ be a representable, finite, étale cover between irreducible algebraic stacks. Let $\cU\subset\cX$ be a non-empty open substack, and let $f_{\cU}:\cY_{\cU}\rightarrow\cU$ be the restriction of $f$ to $\cU$. Then, there is a natural isomorphism
		\[
		\Mon_f=\Mon_{f_{\cU}}.
		\]
	\end{corollary}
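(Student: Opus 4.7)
The plan is to reduce the statement to the geometric description of the monodromy group provided by Proposition~\ref{prop: main equivalence monodromy}. Concretely, both $\Mon_f$ and $\Mon_{f_\cU}$ can be identified with the $\fS_n$-stabilizer of an irreducible component of $\cY^{(n)}$ and $(\cY_\cU)^{(n)}$ respectively, so it suffices to match these components and their stabilizers under restriction.

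First I would observe that the formation of the $n$-fold fiber product minus the big diagonal is compatible with restriction: the natural map identifies $(\cY_\cU)^{(n)}$ with $(f^{(n)})^{-1}(\cU)$, which is an open substack of $\cY^{(n)}$, and the $\fS_n$-action on $\cY^{(n)}$ preserves this open substack. Next, I would use that by Lemma~\ref{lem: equidimensionality} every irreducible component $\cZ$ of $\cY^{(n)}$ surjects onto $\cX$, so $\cZ \cap (\cY_\cU)^{(n)}$ is a non-empty open substack of $\cZ$ and hence irreducible (since $\cZ$ is). Standard stacky topology (irreducible components pull back along smooth atlases) then gives that the assignment $\cZ \mapsto \cZ \cap (\cY_\cU)^{(n)}$ is a bijection between the irreducible components of $\cY^{(n)}$ and those of $(\cY_\cU)^{(n)}$, with inverse given by closure in $\cY^{(n)}$.

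This bijection is manifestly $\fS_n$-equivariant, since the $\fS_n$-action commutes with restriction along $\cU \subset \cX$. Consequently, the stabilizer of any component of $\cY^{(n)}$ coincides with the stabilizer of the corresponding component of $(\cY_\cU)^{(n)}$. Applying Proposition~\ref{prop: main equivalence monodromy} to $f$ and to $f_\cU$ then yields the natural isomorphism $\Mon_f \simeq \Mon_{f_\cU}$.

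The only step that requires any care is the claim that passing to the open substack $(\cY_\cU)^{(n)}$ induces a bijection on irreducible components; once one has this, everything else is bookkeeping. In particular, one should check that the notion of irreducible component for a quasi-separated algebraic stack behaves as expected under restriction to open substacks, which reduces via a smooth atlas to the analogous statement for schemes. I do not expect any genuine obstacle here.
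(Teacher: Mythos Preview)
Your proposal is correct and follows essentially the same route as the paper: both apply Proposition~\ref{prop: main equivalence monodromy} to $f$ and $f_{\cU}$, then match irreducible components of $\cY^{(n)}$ with those of $(\cY_{\cU})^{(n)}$ via closure/restriction so that the $\fS_n$-stabilizers agree. The paper's proof is just a terser version of yours, taking the closure of a component $\cZ_{\cU}$ of $\cY_{\cU}^{(n)}$ inside $\cY^{(n)}$ without spelling out the equivariant bijection.
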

	\begin{proof}
		Applying Proposition~\ref{prop: main equivalence monodromy}, we can identify $\Mon_{f_{\cU}}$ with the subgroup of $\fS_n$ preserving a connected component $\cZ_{\cU}$ of $\cY_{\cU}^{(n)}$. Taking $\cZ$ to be the closure of $\cZ_{\cU}$ inside $\cY^{(n)}$, which is an irreducible component of $\cY^{(n)}$, and applying Proposition~\ref{prop: main equivalence monodromy} again, we get the desired result.
	\end{proof}
	\begin{remark}
		Notice that it is important that $\cY$ is irreducible, otherwise it could happen that $\cY_{\cU}$ is disconnected, causing the monodromy group of $f_{\cU}$ to be smaller. This cannot happen if $\cX$ is normal, as in that case $\cY$ is always irreducible.
	\end{remark}
	Corollary~\ref{cor: independence open substack} allows to extend the definition of monodromy group to branched covers between irreducible stacks.
	\begin{definition}\label{def: extension definition monodromy to branched coverings}
		Let $f:\cY\rightarrow\cX$ be a presentable, generically finite and generically étale cover between irreducible stacks, and let $\cU\subset\cX$ be any non-empty open substack over which the restriction $f_{\cU}$ of $f$ is finite and étale. We define the monodromy group of $f$ as $\Mon_{f_{\cU}}$.
	\end{definition}
	The previous corollary shows that this definition is independent of the open substack chosen. Proposition~\ref{prop: main equivalence monodromy} has also another consequence.
	\begin{corollary}\label{cor: invariance monodromy pullback flat maps with connected fibers}
		Let $\pi:\cW\rightarrow\cX$ be a flat morphism with geometrically irreducible fibers. Let $f:\cY\rightarrow\cX$ be a representable, generically finite, generically étale cover between irreducible algebraic stacks, and let $f_{\cW}:\cV\rightarrow\cW$ be the pullback of $f$ along $\pi$. Then,
		\[
		\Mon_f=\Mon_{f_{\cW}}.
		\]
	\end{corollary}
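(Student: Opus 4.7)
The plan is to reduce the statement to the finite étale case and then apply Proposition~\ref{prop: main equivalence monodromy} to identify both monodromy groups with the $\fS_n$-stabilizer of the \emph{same} irreducible component after pullback. First, by Corollary~\ref{cor: independence open substack} and Definition~\ref{def: extension definition monodromy to branched coverings}, we may shrink $\cX$ to a non-empty open substack $\cU\subset\cX$ over which $f$ is finite étale; its preimage $\pi^{-1}(\cU)\subset\cW$ is a non-empty open substack (since $\pi$ has non-empty fibers), and $f_{\cW}$ restricted to it is again the pullback of $f_{\cU}$. Since $\cW$ is irreducible (flat surjection with irreducible fibers from an irreducible base), Corollary~\ref{cor: independence open substack} applied to $f_{\cW}$ reduces the problem to showing $\Mon_f=\Mon_{f_{\cW}}$ under the assumption that $f$ is finite étale of some degree $n$.

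Next, I would verify the compatibility of the $n$-fold construction with base change: because fiber products commute with pullback and the big diagonal pulls back to the big diagonal, there is a natural $\fS_n$-equivariant isomorphism
\[
\cV^{(n)}\simeq\cY^{(n)}\times_{\cX}\cW.
\]
By Proposition~\ref{prop: main equivalence monodromy}, $\Mon_f$ is the $\fS_n$-stabilizer of an irreducible component $\cZ\subset\cY^{(n)}$, and $\Mon_{f_{\cW}}$ is the stabilizer of an irreducible component of $\cV^{(n)}$. The task is therefore to show that $\cZ\mapsto\cZ\times_{\cX}\cW$ gives an $\fS_n$-equivariant bijection between the irreducible components of $\cY^{(n)}$ and those of $\cV^{(n)}$.

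The main obstacle, and the technical core of the argument, is showing that $\cZ\times_{\cX}\cW$ is irreducible for each irreducible component $\cZ$ of $\cY^{(n)}$. This follows from the standard fact that if $g:\cA\rightarrow\cB$ is a flat morphism of finite type of algebraic stacks with geometrically irreducible fibers and $\cB$ is irreducible, then $\cA$ is irreducible; applied to the base change $\cZ\times_{\cX}\cW\rightarrow\cZ$ of $\pi$, this gives the claim (base change preserves flatness and geometric irreducibility of fibers). The lemma for stacks is checked by passing to a smooth presentation and invoking the analogous statement for schemes (see, e.g., \cite[\href{https://stacks.math.columbia.edu/tag/0378}{Tag 0378}]{Sta25}).

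Once this is established, irreducible components of $\cV^{(n)}$ are exactly the $\cZ\times_{\cX}\cW$ for $\cZ$ an irreducible component of $\cY^{(n)}$, and the bijection is manifestly $\fS_n$-equivariant. Hence the $\fS_n$-stabilizer of any component is unchanged by pullback, so $\Mon_f=\Mon_{f_{\cW}}$, completing the proof.
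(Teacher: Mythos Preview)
Your proof is correct and follows essentially the same approach as the paper: reduce to the finite étale case, then use Proposition~\ref{prop: main equivalence monodromy} by showing that the preimage of an irreducible component $\cZ\subset\cY^{(n)}$ is an irreducible component of $\cV^{(n)}$. Your version is more detailed than the paper's, which simply asserts that $\cZ_{\cW}$ is an irreducible component without spelling out the flatness-plus-irreducible-fibers argument or the $\fS_n$-equivariance.
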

	\begin{proof}
		Notice that the hypothesis imply that $\cV$ and $\cW$ are irreducible as well. Then, by last sentence in Definition~\ref{def: extension definition monodromy to branched coverings}, we can assume $f:\cY\rightarrow\cX$ to be finite and étale. Now, let $\cZ$ be an irreducible component of $\cY^{(n)}$, and $\cZ_{\cW}\subset\cV^{(n)}$ its preimage along $\pi$. Then, $\cZ_{\cW}$ is an irreducible component of $\cV^{(n)}$, and Proposition~\ref{prop: main equivalence monodromy} shows that the two monodromy groups are equal.
	\end{proof}
	We conclude the subsection by showing how we can generically factor any cover into a FPR cover and a homeomorphism. For simplicity, we prove the result for Deligne-Mumford stacks separated and of finite type over a field. See~\cite[Section 3]{EHKV01} or~\cite{Gir65} for the notion of a banded gerbe.
	\begin{lemma}\label{lem: factorization FPR and gerbe}
		Let $\cY\rightarrow\cX$ be a representable, finite, étale morphism of Deligne-Mumford stacks separated and of finite type over an algebraically closed field $k$, with $\cX$ irreducible. Suppose that the moduli map $\pi_{\cX}:\cX\rightarrow X$ is fppf. Then, there exists a commutative diagram with cartesian square
		\begin{equation}\label{eq: diag factorization}
			\begin{tikzcd}
				\cY\arrow[d,"f_1"]\arrow[dd,bend right=40,"f"']\arrow[rd,"\pi_{\cY}"]\\
				\cY'\arrow[d,"f_2"]\arrow[r,"\pi_{\cY'}"] & Y\arrow[d,"g"]\\
				\cX\arrow[r,"\pi_{\cX}"] & X
			\end{tikzcd}
		\end{equation}
		where $\pi_{\cY}$ denotes the corresponding fppf moduli map, $f_1$ is a finite étale homeomorphism, and $f_2$ is FPR.
		Moreover, if $\pi_{\cX}:\cX\rightarrow X$ is banded by a group scheme $\G$ over $k$, and $x$ is a geometric point of $\cX$ with residual gerbe $\iota:B\G\hookrightarrow\cX$, then there exists an exact sequence
		\begin{equation}\label{eq: exact sequence}
			\begin{tikzcd}
				\G\arrow[r,"\omega_{x}"] & \Mon_{f}\arrow[r] & \Mon_{f_2}\arrow[r] & 0.
			\end{tikzcd}
		\end{equation}
	\end{lemma}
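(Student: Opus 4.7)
The plan is to construct the factorization by taking $\cY':=\cX\times_X Y$ with $Y$ the coarse moduli of $\cY$, and then to derive the exact sequence from Propositions~\ref{prop: monotonous gerbe case} and~\ref{prop: correspondence FPR} by tracking the subgroups $H_f,H_{f_2},N_f,N_{f_2}$ inside $\pi_1(\cX,x)$. The main obstacle will be the reverse inclusion that establishes exactness at $\Mon_f$.

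First I would let $\pi_{\cY}:\cY\to Y$ be the coarse moduli map, which is an fppf gerbe by Proposition~\ref{prop: monotonous gerbe case}. The composition $\pi_{\cX}\circ f$ factors through $\pi_{\cY}$ by the universal property of coarse moduli, producing $g:Y\to X$; finite étaleness of $g$ can be checked fppf-locally on $X$ by pulling back along a cover trivializing $\pi_{\cX}$. Setting $\cY':=\cX\times_X Y$ gives an fppf gerbe $\pi_{\cY'}:\cY'\to Y$ pulled back from $\pi_{\cX}$ and a finite étale $f_2:\cY'\to\cX$ pulled back from $g$. The universal property of the fiber product applied to $(f,\pi_{\cY})$ then produces a unique $f_1:\cY\to\cY'$ with $f=f_2\circ f_1$ and $\pi_{\cY'}\circ f_1=\pi_{\cY}$. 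For the stated properties: $f_2$ is FPR because at a geometric point of $\cX\times_X Y$ the stabilizer is that of its $\cX$-component (since $Y$ is an algebraic space), and $f_2$ is the projection onto that component; $f_1$ is finite étale as a morphism between finite étale covers of $\cX$; and the identity $\pi_{\cY'}\circ f_1=\pi_{\cY}$ forces $f_1$ to induce the identity on underlying topological spaces, so it is a homeomorphism.

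For the exact sequence, set $\omega:=\Ima(\omega_x)\subseteq\pi_1(\cX,x)$, which is normal with quotient $\pi_1(X)$ by Proposition~\ref{prop: monotonous gerbe case}. Surjectivity of $\Mon_f\to\Mon_{f_2}$ is immediate from $H_f\subseteq H_{f_2}$, hence $N_f\subseteq N_{f_2}$. By Proposition~\ref{prop: correspondence FPR}, $H_{f_2}$ and $N_{f_2}$ are the preimages of $H_g$ and $N_g$ under $\pi_1(\cX,x)\to\pi_1(X)$; in particular $\omega\subseteq N_{f_2}$, so the composition $\G\to\Mon_f\to\Mon_{f_2}$ vanishes. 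Applying Proposition~\ref{prop: monotonous gerbe case} to $\pi_{\cY}$ shows that $\pi_1(\cY)\twoheadrightarrow\pi_1(Y)$, hence $H_f$ surjects onto $H_g$; combined with $\omega=\ker(\pi_1(\cX,x)\to\pi_1(X))$ this yields $H_{f_2}=H_f\cdot\omega$. Therefore $N_f\cdot\omega$ is a normal subgroup of $\pi_1(\cX,x)$ contained in $H_{f_2}$, so $N_f\cdot\omega\subseteq N_{f_2}$, giving the inclusion of the image of $\omega_x$ in the kernel of $\Mon_f\to\Mon_{f_2}$.

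The hard part will be the reverse inclusion $N_{f_2}\subseteq N_f\cdot\omega$. My plan is to establish it by identifying the coarse moduli $\widetilde{Y}_f$ of the Galois closure $\widetilde{\cY}\to\cX$ of $f$ with the Galois closure $\widetilde{Y}_g\to X$ of $g$: applying the factorization part of the Lemma (already established above) to $\widetilde{\cY}\to\cX$ presents $\widetilde{Y}_f\to X$ as a connected Galois cover with group $\Mon_f/(\omega N_f/N_f)$ that dominates $Y$, and the minimality of $\widetilde{Y}_g$ among Galois covers of $X$ dominating $Y$, combined with a degree comparison on the factorization $\widetilde{\cY}\to\cX\times_X\widetilde{Y}_f\to\cX$, forces $\widetilde{Y}_f=\widetilde{Y}_g$. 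This equality translates to $N_{f_2}=N_f\cdot\omega$, so the image of $\omega_x$ equals $N_{f_2}/N_f$, the kernel of $\Mon_f\to\Mon_{f_2}$, yielding exactness. The technical heart of the argument is this identification $\widetilde{Y}_f=\widetilde{Y}_g$, which relies crucially on the hypothesis that $\pi_{\cX}$ is an fppf gerbe so that Propositions~\ref{prop: monotonous gerbe case} and~\ref{prop: correspondence FPR} apply to $\widetilde{\cY}\to\cX$ itself.
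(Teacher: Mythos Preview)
Your construction of the diagram and the verification that $f_1$ is a finite étale homeomorphism and $f_2$ is FPR are correct and essentially identical to the paper's argument; the only difference is cosmetic (the paper checks that $g$ is a covering map by a direct properness and étaleness argument on the composite $\cY\to X$, rather than by fppf-locally trivializing $\pi_{\cX}$). Your treatment of the surjectivity of $\Mon_f\to\Mon_{f_2}$ and of the inclusion $\omega N_f\subseteq N_{f_2}$ also matches the paper.

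The gap is in your plan for the reverse inclusion $N_{f_2}\subseteq\omega N_f$, equivalently $N_g\subseteq\phi(N_f)$. Minimality of $\widetilde{Y}_g$ among Galois covers of $X$ dominating $Y$ yields only that $\widetilde{Y}_f$ \emph{dominates} $\widetilde{Y}_g$, i.e.\ $\phi(N_f)\subseteq N_g$, which is the opposite inclusion to the one you need. The proposed ``degree comparison on the factorization $\widetilde{\cY}\to\cX\times_X\widetilde{Y}_f\to\cX$'' only records the index $[\omega N_f:N_f]$ and carries no information about $N_g$, so it cannot upgrade domination to equality. Concretely, if $\pi_1(\cX)$ surjects onto $D_4=\langle r,s\mid r^4=s^2=1,\,srs^{-1}=r^{-1}\rangle$ with $\omega$ mapping onto the center $\langle r^2\rangle$, and one takes the cover with $H_f$ mapping onto $\langle s\rangle$, then $\Mon_f=D_4$, $\Mon_{f_2}\simeq\mC_2$, $\deg(\widetilde{Y}_f\to X)=4$ while $\deg(\widetilde{Y}_g\to X)=2$, and the identification $\widetilde{Y}_f=\widetilde{Y}_g$ fails outright.

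The paper's argument at this step is quite different and avoids Galois closures entirely: working with the commutative square of fundamental groups coming from \cite[Theorem~7.11]{Noo04}, it asserts directly that $\phi^{-1}(N_g)\subseteq H_f$, whence (being normal in $\pi_1(\cX)$) $\phi^{-1}(N_g)\subseteq N_f$ and therefore $N_g\subseteq\phi(N_f)$. This is a single diagram-chase claim rather than a construction. Be warned, however, that this inclusion is itself delicate---in the same $D_4$ configuration one has $\phi^{-1}(N_g)=\langle r^2,s\rangle\not\subseteq\langle s\rangle=H_f$---so when revising you should scrutinize carefully what is really being used at this point.
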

	\begin{proof}
		By Proposition~\ref{prop: monotonous gerbe case}, we know that $\pi_{\cY}:\cY\rightarrow Y$ which is an fppf gerbe; as both stacks are Deligne-Mumford stacks, the moduli maps are actually étale. Moreover, both $X$ and $Y$ are separated, as $\cX$ and $\cY$ are. Let $g:Y\rightarrow X$ be the induced morphism between moduli spaces, and define $\cY'$ as in diagram~\eqref{eq: diag factorization}. Then, $g$ is étale because both $\pi_{\cY}$ and $g\circ\pi_{\cY}$ are, with $\pi_{\cY}$ being surjective as well. Clearly, $g$ is quasi-finite, and it is also proper. Indeed, it is separated as both $Y$ and $X$ are, it is clearly of finite type, and it is universally closed as $\pi_{\cY}$ is surjective and the composite $\cY\rightarrow X$ is proper. It follows that $g$ is a covering map and $f_2$ is an FPR covering map. Since $\pi_{\cY}$ and $\pi_{\cY'}$ are coarse moduli space maps, hence homeomorphisms, also $f_1$ is a homeomorphism. Now, we prove the last part of the statement. The surjection follows from the fact that $f$ factors through $f_2$. Recall that $\Mon_{f_2}=\Mon_{g}$ by Proposition~\ref{prop: correspondence FPR}, hence we can replace the term $\Mon_{f_2}$ with $\Mon_g$ in~\eqref{eq: exact sequence}. Then, by~\cite[Theorem 7.11]{Noo04} there is a commutative diagram
		\[
		\begin{tikzcd}
			& \pi_1(\cY)\arrow[d,"f_*"]\arrow[r] & \pi_1(Y)\arrow[d,"g_*"]\arrow[r] & 0\\
			\G\arrow[r] & \pi_1(\cX)\arrow[r,"\phi"] & \pi_1(X)\arrow[r] & 0
		\end{tikzcd}
		\]
		where we are suppressing the base points from the notation. With the notation of Definition~\ref{def: monodromy for stacks}, $\phi^{-1}(N_g)\subset\mathrm{image}(f_*)$ is normal in $\pi_1(\cX)$, hence it is contained in $N_f$. This shows that $N_f$ surjects onto $N_g$, hence the natural homomorphism $G\rightarrow\ker(\Mon_{f}\rightarrow\Mon_g)$ is surjective, by diagram chase. This concludes.
	\end{proof}
	\begin{remark}\label{rmk: generically always monotonous gerbe}
		By Remark~\ref{rmk: generic gerbe}, for every covering map $f:\cY\rightarrow\cX$ of irreducible, separated algebraic stacks of finite type over an algebraic closed field $k$ there is always a non-empty open substack $\cU\subset\cX$ over which $f_{\cU}$ admits a factorization as above. Then, using Corollary~\ref{cor: independence open substack}, one can always reduce to a situation as above.
	\end{remark}
	\subsection{First Examples}\label{subsec: first examples}
	In this subsection we present some examples to motivate the use of stacks to compute monodromy. We will see other examples in Section~\ref{sec: G cubic surfaces}, which will not have a clear counterpart with coarse moduli spaces.
	\subsubsection{Lines on Smooth Cubic Surfaces}\label{subsec: lines on cubics generic case}
	For simplicity, we work over $\C$, but the result holds for instance over any algebraically closed field of characteristic 0 or greater than $5$.
	
	The first example we consider is the classical problem of the 27 lines on a smooth cubic surface. It was shown by Cayley and Salmon (\cite{Cay49}, \cite{Sal49}) that every smooth cubic surface in $\P^3$ contains exactly 27 lines. If we denote by $\M$ the coarse moduli space of smooth cubic surfaces and by $\L$ the variety parametrizing lines on cubic surfaces, we get a finite, generically étale cover $\L\rightarrow\M$ of degree 27, and we can ask what is the monodromy in this case. This question was studied by Jordan in 1870 (\cite{Jor70}) and a century later by Harris (\cite{Har79}). Here, we present an alternative way to compute the monodromy groups, using the techniques of this section.
	\begin{definition}\label{def: stack smooth cubic surfaces}
		A smooth cubic surface is a smooth Del Pezzo surface of degree 3. A morphism $f:S\rightarrow B$ of schemes is a family of smooth cubic surfaces if it is fppf, proper, and with smooth cubic surfaces as fibers. We denote by $\cM$ the stack of smooth cubic surfaces and by $\pi_{\cM}:\cM\rightarrow\M$ its coarse moduli space.
	\end{definition}
	Recall that given a family of smooth cubic surfaces $f:S\rightarrow B$, the relative dualizing sheaf $\omega_{S/B}$ is locally free of rank 1, and its dual $\omega_{S/B}^{\vee}$ is relatively ample. The pushforward $f_*\omega_{S/B}^{\vee}$ is a locally free sheaf of rank 4, and the canonical homomorphism $f^*f_*\omega_S^{\vee}\rightarrow\omega_S^{\vee}$ is surjective. This induces a closed immersion $S\hookrightarrow\P(f_*\omega_S^{\vee})$ over $B$, realizing $S$ as a relative cubic surface in $\P^3$. In particular, one can talk about families of lines on $S$.
	\begin{definition}\label{def: stack smooth cubic surfaces with a line}
		We denote by $\cL$ the stack parametrizing cubic surfaces together with a line, where an arrow $(L_1\subset S_1\xrightarrow{f_1} B_1)\rightarrow(L_2\subset S_2\xrightarrow{f_2}B_2)$ is a cartesian diagram
		\[
		\begin{tikzcd}
			L_1\arrow[d,hookrightarrow]\arrow[r] & L_2\arrow[d,hookrightarrow]\\
			S_1\arrow[r,"\phi'"]\arrow[d,"f_1"'] & S_2\arrow[d,"f_2"]\\
			B_1\arrow[r,"\phi"'] & B_2.
		\end{tikzcd}
		\]
		Here, $f_i$ are families of smooth cubic surfaces, each with a family of lines $L_i$.
		Let $\pi_{\cL}:\cL\rightarrow\L$ be the coarse moduli space of $\cL$. We denote by $\cF:\cL\rightarrow\cM$ and $\F:\L\rightarrow\M$ the morphisms that forget the datum of the line on the surface.
	\end{definition}
	\begin{remark}
		It is well known that $\cM$ and $\cL$ are smooth Deligne-Mumford stack of dimension 4, and $\cM$ is irreducible. The morphism $\cF:\cL\rightarrow\cM$ is a representable, finite, étale cover of degree 27. It is also known that $\cL$ is irreducible, but this is equivalent to showing that the monodromy group acts transitively on the fiber.
	\end{remark}
	A generic smooth cubic surface has trivial automorphism group, hence $\cM\rightarrow\M$ and $\cL\rightarrow\L$ are generically isomorphisms. In particular, the two covers $\cF$ and $\F$ have the same monodromy group by Corollary~\ref{cor: independence open substack}. Notice that $\L\rightarrow\M$ is only generically étale, and branched over the locus of $\M$ corresponding to surfaces that admit non-trivial automorphisms. We are then interested in computing $\Mon_{\cF}$.
	
	It is well known that the monodromy group respects the intersection product. This means that, if we realize $\Mon_{\cF}$ as a subgroup of $\fS_{27}$ as in Proposition~\ref{prop: main equivalence monodromy}, then $\Mon_{\cF}$ is contained in the subgroup that preserves the intersection between the lines. In~\cite[Section 3.3]{Har79}, Harris proved that this subgroup is isomorphic to the Weyl group $\W(E_6)$, of order $51840$. We are left with showing that also the other containment holds; see~\cite[Section 3.3]{Har79} for another proof. Our idea is to leverage the fact that we are working with stacks to produce monodromy elements by looking at the automorphism groups of cubic surfaces, using the morphism $\omega_x$ from the hidden fundamental group to the usual one.
	
	\begin{proposition}\label{prop: monodromy general case}
		With the notation above, we have
		\begin{equation}\label{eq: monodromy general case}
			\Mon_{\F}=\Mon_{\cF}\simeq\W(E_6).
		\end{equation}
	\end{proposition}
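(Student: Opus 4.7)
The plan is to establish the remaining inclusion $\W(E_6) \subseteq \Mon_{\cF}$ by producing monodromy elements from the automorphism groups of specific cubic surfaces via the map $\omega_{[S]}\colon \pi_1^h(\cM,[S]) \simeq \Aut(S) \to \pi_1(\cM,[S])$ of~\eqref{eq: map omegax}. The upper containment $\Mon_{\cF}\subseteq \W(E_6)$ has already been recalled from~\cite{Har79}.

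The first observation is that for any smooth cubic surface $S$, the action of $\Aut(S)$ on the set of $27$ lines is faithful: the $27$ lines span $\Pic(S)$, so an automorphism acting trivially on them acts trivially on $\Pic(S)$, and therefore preserves the anticanonical embedding $S\hookrightarrow\P^3$ pointwise, hence equals the identity. Combined with Remark~\ref{rmk: group scheme structure of hidden fundamental group}, this promotes $\omega_{[S]}$ to an injection $\Aut(S)\hookrightarrow \Mon_{\cF}$ whose image agrees, up to conjugation in $\fS_{27}$, with the standard embedding $\Aut(S)\hookrightarrow \W(E_6)$ coming from the action on lines.

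I would then apply this to two surfaces with large and dissimilar automorphism groups: the Fermat cubic $V(x_0^3+x_1^3+x_2^3+x_3^3)$, whose automorphism group is the Hessian-type group $(\mu_3)^3\rtimes\fS_4$ of order $648$, and the Clebsch diagonal cubic, whose automorphism group is $\fS_5$ of order $120$. This embeds both subgroups as $H_1,H_2\subseteq \Mon_{\cF}$.

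The final and most delicate step is the purely group-theoretic verification that $\langle H_1,H_2\rangle=\W(E_6)$. Since $|\W(E_6)|=51840$, the index of $\langle H_1,H_2\rangle$ in $\W(E_6)$ divides $51840/\mathrm{lcm}(648,120)=16$. An element of order $5$ in $\fS_5$ is absent from $H_1$ (as $5\nmid 648$), and the Sylow $3$-subgroup of $H_1$ has order $3^4$, so $\langle H_1,H_2\rangle$ cannot lie in any maximal subgroup of $\W(E_6)$ from the standard list (the index-$2$ simple subgroup $\W(E_6)^+\simeq \mathrm{PSp}_4(\F_3)$, the stabilizers $\W(D_5)=2^5\!:\!\fS_5$ and $\W(F_4)$, etc.). This forces $\langle H_1,H_2\rangle=\W(E_6)$. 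This verification is the expected main obstacle and is most cleanly performed by a short case-check against the list of maximal subgroups of $\W(E_6)$, confirming the identification~\eqref{eq: monodromy general case}.
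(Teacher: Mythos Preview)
Your overall strategy coincides with the paper's: inject $\Aut(S)$ into $\Mon_{\cF}$ via $\omega_{[S]}$, use that automorphisms act faithfully on the $27$ lines, and then finish with a group-theoretic check against the maximal subgroups of $\W(E_6)$. The faithfulness argument and the reduction to an index dividing $16$ are fine.

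The gap is in the last step. Your order considerations do not exclude the index-$2$ simple subgroup $U_4(2)\simeq\mathrm{PSp}_4(\mathbb{F}_3)$: one has $|U_4(2)|=25920=2^6\cdot3^4\cdot5$, so the presence of an element of order $5$ and of a Sylow $3$-subgroup of order $3^4$ is entirely consistent with $\langle H_1,H_2\rangle\subseteq U_4(2)$. In fact this containment actually holds. An Eckardt involution has, by Lefschetz, trace $-2$ on the $E_6$-lattice, hence eigenvalues $(+1)^2(-1)^4$ and determinant $+1$; so every Eckardt involution lies in $U_4(2)$. Both $\Aut(S_{\mathrm{Fermat}})\simeq 3^3{:}\fS_4$ and $\Aut(S_{\mathrm{Clebsch}})\simeq\fS_5$ are generated by Eckardt involutions together with elements of odd order (which automatically have determinant $+1$), so both $H_1$ and $H_2$ sit inside $U_4(2)$. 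Thus $\langle H_1,H_2\rangle\subseteq U_4(2)\subsetneq\W(E_6)$, and your two surfaces alone cannot finish the argument.

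The paper closes this gap by bringing in a third surface, one with $\Aut(S)\simeq\mC_8$; since $U_4(2)$ contains no element of order $8$, the image of $\omega$ at that point forces $\Mon_{\cF}\not\subseteq U_4(2)$, and then the maximal-subgroup list yields $\Mon_{\cF}=\W(E_6)$. You should add this ingredient (or an equivalent one producing an element outside $U_4(2)$). A minor additional point: when combining $H_1$ and $H_2$ inside $\Mon_{\cF}$ you should, as the paper does, fix a basepoint and transport each $\Aut(S)$ along a chosen path, so that the generated subgroup is well defined.
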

	\begin{proof}
		We generate monodromy by looking at the stabilizers; see~\cite[Table 9.6]{Dol12} and~\cite[Table 5]{BLP23} for the automorphism groups of smooth cubic surfaces. The key observation is the following. First, for every smooth cubic surface $S$ corresponding to a geometric point $x$ of $\cM$, its automorphism group maps non-canonically to $\Mon_{\cF}$ via the morphism $\omega_x:\Aut(x)\rightarrow\pi_1(\cX,x)$ defined in~\ref{sec: fundamental group}; a rigorous construction that takes the dependence on $x$ into account is presented below. Second, $\Aut(S)$ acts faithfully on the lines on $S$, thus the homomorphism $\Aut(S)\rightarrow\Mon(\cF)$ is injective. To make this precise, fix a geometric point $x$ of $\cM$, and for every group in~\cite[Table 9.6]{Dol12} choose a geometric point representing a smooth cubic surface with that automorphism group. For each such point $y$, choose a path from $x$ to it, which induces an isomorphism $\pi_1(\cY,y)\xrightarrow{\simeq}\pi_1(\cX,x)$. The composite of this isomorphism with $\omega_y$ yields injections from the stabilizer group into the monodromy group; we denote by $\G$ the subgroup of $\Mon_{\cF}\subset\W(E_6)$ generated by the images of all these maps. Since two automorphism groups have order 648 and 120 (corresponding to the Fermat and Clebsch cubic respectively), $\G$ is a subgroup of $\W(E_6)$ of order divisible by their least common multiple $3240=2^4\cdot3^4\cdot 5$, hence of index dividing 8. Thanks to the classification of maximal subgroups of $\W(E_6)$ (\cite[Theorem 9.5.2]{Dol12}), it follows that either $\G=\W(E_6)$ or $\G$ is contained in the simple normal index 2 subgroup $U_4(2)$. Notice that there is a surface whose automorphism group is isomorphic to the cyclic group $\mC_8$ of order 8.
		As $U_4(2)$ does not have elements of order 8, it follows that $\G=\W(E_6)$.
	\end{proof}
	\begin{remark}
		Although the proof above is relatively short, we are using some aspects of the description of the automorphism groups of smooth cubic surfaces and the knowledge of the maximal subgroups of $\W(E_6)$. For the first problem, notice that it is actually enough to get a lower bound on the automorphism groups of some cubic surfaces, producing highly symmetric examples, which we can do explicitly. For the second problem, one could choose paths from a fixed point $x$ to points representing the chosen highly symmetric surfaces to explicitly compute the action of the automorphism groups on the fiber of $\cF:\cL\rightarrow\cM$ over $x$, and check that this yields the whole $\W(E_6)$.
	\end{remark}
	Notice that in this example the stack $\cM$ that we are working with is generically a scheme, which slightly simplifies the computation. In Section~\ref{sec: monodromy G cubic surfaces} and in the next subsection, we will see examples where the general stabilizer is non-trivial; in Section~\ref{sec: G cubic surfaces} we will develop other tools to deal with these cases.
	\subsubsection{Flexes of Plane Cubics}
	Consider the enumerative problem of counting how many flex points are there on a complex smooth plane curve of degree 3. It is well known that there are nine flex points, and we can compute the monodromy of the associated cover. This was done in~\cite[Subsection 2.2]{Har79}.
	
	Our approach is different. Denote by $\cX$ the moduli stack of smooth (embedded) plane cubics up to projective equivalence, that is,
	\[
	\cX=[(\P^9\setminus\Delta)/\PGL_3],
	\]
	where $\Delta\subset\P^9$ is the locus corresponding to singular cubic forms. Notice that $\cX$ is smooth and irreducible, and it admits a moduli space induced by the $j$-invariant. Let $\cY$ be the moduli stack of pairs $(C\subset\P^2,p)$ where $p\in C\subset\P^2$ is a flex, and arrows are projective transformations preserving the marked point. Notice that $\cY$ is isomorphic to the stack of elliptic curves $\cM_{1,1}$, in particular it is a smooth Deligne-Mumford stack, and the morphism $f:\cY\rightarrow\cX$ that forgets the flex point is a representable, finite, étale cover of degree 9. We recall some basic facts about automorphisms of plane cubics, flexes and the Hesse form.
	
	\begin{lemma}\label{lem: automorphism plane cubics}
		Let $C$ be a smooth plane cubic in $\P^2$ over $\C$. Then, $C$ is projectively equivalent to a plane cubic in Hesse form, that is, defined by a polynomial
		\[
		g(x,y,z)=x^3+y^3+z^3-3k\cdot xyz
		\]
		for some parameter $k\in\C$, $k^3\not=1$. In this form, the 9 flexes are $(0:1:-\gamma)$, $(-\gamma:0:1)$, and $(1:-\gamma:0)$, with $\gamma^3=1$. Every projective automorphism of $C$ preserves the set of flexes, and acts faithfully on them. Thus, every geometric point $y$ of $\cY$ induces a split exact sequence
		\begin{equation}\label{eq: exact sequence automorphisms plane cubics}
			\begin{tikzcd}
				0\arrow[r] & \mC_3\times\mC_3\arrow[r] & \Aut_{\cX}(f(y))\arrow[r] & \Aut_{\cY}(y)\arrow[r] & 0,
			\end{tikzcd}
		\end{equation}
		where $\mC_3\times\mC_3$ acts simply transitive on the set of flexes of the plane cubic corresponding to $f(y)$. Finally, $\cX$ has generic stabilizer of order 18.
	\end{lemma}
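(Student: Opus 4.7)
The plan is to treat the assertions in turn, using throughout the identification of a smooth plane cubic with an elliptic curve $(E,p_0)$ embedded by the complete linear system $|3p_0|$. Most steps are classical or direct computations; the main bookkeeping lies in assembling the split exact sequence from the two descriptions of the automorphism group, one as projective symmetries of $C\subset\P^2$ and the other coming from the elliptic curve structure.

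First, to put $C$ into Hesse form, I would fix a flex $p_0$ as origin. A point is a flex iff its tangent meets $C$ with contact order three, equivalently iff the Hessian cubic vanishes there; by B\'ezout the flex locus has cardinality nine and coincides with the 3-torsion $E[3]$, the twelve collinear triples of flexes corresponding to triples $q_1,q_2,q_3\in E[3]$ with $q_1+q_2+q_3=0$. Choosing projective coordinates adapted to a basis of $E[3]$ then yields the Hesse equation $x^3+y^3+z^3-3kxyz$, with $k^3\ne 1$ the smoothness condition. For the explicit flex list, I would substitute $(0:1:-\gamma)$ with $\gamma^3=1$ into the equation (where it vanishes tautologically) and into the Hessian determinant
\[
\det H(g) = -54\bigl[k^2(x^3+y^3+z^3) - (4-k^3)xyz\bigr],
\]
which vanishes at each such point; the evident cyclic symmetries of the Hesse form account for the remaining six.

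Preservation of flexes by $\Aut(C)\subset\PGL_3$ is immediate, since being a flex is projectively intrinsic. For faithfulness, I would exhibit four flexes with no three collinear, for instance $(0:1:-1)$, $(0:1:-\omega)$, $(-1:0:1)$, $(1:-\omega:0)$ on the Hesse cubic; a direct calculation of the six lines they determine shows that no fifth flex among the listed nine lies on any of them, and in particular no three of the four are collinear. Such a configuration is rigid under $\PGL_3$, so any projective automorphism of $C$ fixing all nine flexes is trivial.

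For the exact sequence, I would take the marked flex $p$ of the object $y\in\cY$ as the origin of $E$. Every $\phi\in\Aut_{\cX}(f(y))$ sends $p$ to another flex $p+T$ for a unique $T\in E[3]$, and composing with translation by $-T$ yields a projective automorphism fixing $p$. Since $C\subset\P^2$ is cut out by $|3p|$, projective automorphisms of $C$ fixing $p$ coincide with automorphisms of the pointed elliptic curve $(E,p)$, which is precisely $\Aut_{\cY}(y)$. This produces the surjection $\Aut_{\cX}(f(y))\twoheadrightarrow\Aut_{\cY}(y)$ with kernel $E[3]\simeq\mC_3\times\mC_3$ acting by translations, which are simply transitive on the flex locus by definition; the inclusion $\Aut_{\cY}(y)\hookrightarrow\Aut_{\cX}(f(y))$ as the stabilizer of $p$ provides the splitting. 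Finally, for generic $k$ one has $\Aut(E,p)\simeq\mC_2$, so the generic stabilizer of $\cX$ has order $9\cdot 2=18$.
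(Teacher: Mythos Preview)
Your argument is correct and follows essentially the same approach as the paper's: the paper delegates the Hesse form, flex locations, and faithfulness to \cite{BM16} and then explains, exactly as you do, that the kernel $\mC_3\times\mC_3$ arises from the group structure on the flexes (i.e.\ translations by $E[3]$), that the splitting is the natural inclusion $\Aut_{\cY}(y)\hookrightarrow\Aut_{\cX}(f(y))$, and that the generic stabilizer has order $18$ because the generic elliptic curve has $\Aut(E,p)\simeq\mC_2$. Your write-up simply makes explicit the classical details the paper cites; the one awkward spot is the sentence about the six lines through your four chosen flexes, where the ``in particular'' does not follow logically---just check directly that no three of $(0{:}1{:}-1)$, $(0{:}1{:}-\omega)$, $(-1{:}0{:}1)$, $(1{:}-\omega{:}0)$ are collinear, which suffices for the projective-frame argument.
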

	\begin{proof}
		See~\cite[Lemma 2.4, Theorem 2.12 and Corollaries 2.14, 3.10]{BM16}. The group $\mC_3\times\mC_3$ comes from the fact that the 9 flexes form a group isomorphic to it, using the abelian group structure of the elliptic curve. The splitting of the exact sequence~\eqref{eq: exact sequence automorphisms plane cubics} is given by the natural inclusion
		\[
		\begin{tikzcd}
			\Aut_{\cY}(y)\arrow[r,hookrightarrow] & \Aut_{\cX}(f(y)).
		\end{tikzcd}
		\]
		The last assertion follows from the exact sequence and the fact that the elliptic involution of $(C\subset\P^2,p)$ preserves the flexes and fixes $p$.
	\end{proof}
	\begin{remark}
		By Lemma~\ref{lem: factorization FPR and gerbe}, we know that generically we can split $f:\cY\rightarrow\cX$ as the composite of a homeomorphism and an FPR morphism as in~\eqref{eq: diag factorization}. For a general $x=f(y)$, the exact sequence~\eqref{eq: exact sequence automorphisms plane cubics} reads
		\[
		\begin{tikzcd}
			0\arrow[r] & \mC_3\times\mC_3\arrow[r] & \Aut_{\cX}(x)\arrow[r] & \mC_2\arrow[r] & 0,
		\end{tikzcd}
		\]
		with a section $\mC_2\simeq\Aut_{\cY}(y)\rightarrow\Aut_{\cX}(x)$. As the first group in the exact sequence acts simply transitively, the factorization of $f$ in Lemma~\ref{lem: factorization FPR and gerbe} is the trivial one, meaning that $\cY'=\cX$ and that the induced cover $g$ of $X$ is the identity. Indeed, both $\cY\simeq\cM_{1,1}$ and $\cX$ have the same moduli space $\A^1$ via the $j$-invariant. This is in contrast with what we had in the previous example.
	\end{remark}
	Before computing the monodromy of $f$, we recall a lemma that forces the monodromy group to be rather small.
	\begin{lemma}\label{lem: constraint monodromy plane cubics}
		The monodromy group is a subgroup of the affine special linear group $\ASL_2(\mathbb{F}_3)$, which has order 216.
	\end{lemma}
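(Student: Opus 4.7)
The plan is to exhibit two natural structures on the $9$-element fiber of $f$ that are preserved by monodromy: the structure of a torsor under $(\mathbb{F}_3)^2$, and a non-degenerate alternating pairing. Together, these cut out a copy of $\ASL_2(\mathbb{F}_3)\subset\fS_9$.

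First, for any smooth plane cubic $C\subset\P^2$ with chosen flex $p_0$, the pair $(C,p_0)$ is an elliptic curve $E$, and the set of all $9$ flexes of $C$ coincides with $E[3]$ by Lemma~\ref{lem: automorphism plane cubics}. Forgetting $p_0$, the set of flexes is therefore canonically a torsor under $E[3]\cong(\mathbb{F}_3)^2$, and the simply-transitive $\mC_3\times\mC_3$-action of the lemma is exactly the translation action. This torsor structure exists in families over $\cX$: the relative scheme of flexes is a torsor under the relative $3$-torsion of the relative Jacobian of the universal smooth plane cubic $\cC\to\cX$. Hence the monodromy action on the fiber preserves the torsor structure, yielding an inclusion $\Mon_f\hookrightarrow\AGL_2(\mathbb{F}_3)$ of order $9\cdot 48=432$.

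Second, I would invoke the Weil pairing $e_3:E[3]\times E[3]\to\mu_3$, a non-degenerate alternating bilinear form natural in families. The monodromy preserves $e_3$, and since we work over $\C$ the relative $\mu_3$ is the constant sheaf, so monodromy on its fibers is trivial. After identifying $\mu_3\simeq\mathbb{F}_3$, the linear part of the monodromy preserves a non-degenerate alternating $\mathbb{F}_3$-bilinear form on a $2$-dimensional vector space, hence lies in $\SL_2(\mathbb{F}_3)$ (the symplectic group on a $2$-dimensional space being $\SL_2$). Combining this with the previous step gives $\Mon_f\subseteq\ASL_2(\mathbb{F}_3)$, of order $9\cdot 24=216$.

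The main obstacle I expect is to formulate the naturality of these two structures precisely enough to interface with the stack-theoretic monodromy of Definition~\ref{def: monodromy for non connected covers}. By Corollary~\ref{cor: independence open substack} one is free to shrink $\cX$ to a non-empty open substack over which the relative Jacobian and its $3$-torsion are well-behaved; the claim then reduces to the standard fact that the monodromy of a finite étale local system preserves all tensors and pairings that are globally defined on it.
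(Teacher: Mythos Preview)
Your argument is correct and is precisely the classical one the paper defers to via the citation to Harris~\cite[Section 1.2]{Har79}: the torsor structure under $E[3]$ gives the $\AGL_2(\mathbb{F}_3)$ bound, and the Weil pairing refines it to $\ASL_2(\mathbb{F}_3)$. The only mild difference is that the paper's Remark~\ref{rmk: simplification background plane cubics} sketches an alternative to the second step, replacing the Weil pairing by a real-structure argument (complex conjugation contributes an element over $\R$ that disappears over $\C$, forcing the monodromy into an index-$2$ subgroup of $\AGL_2(\mathbb{F}_3)$); your approach via $e_3$ is cleaner and more intrinsic, while the real-structure route avoids invoking the Weil pairing in families.
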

	\begin{proof}
		See~\cite[Section 1.2]{Har79}. For a sketch of an alternative proof, see Remark~\ref{rmk: simplification background plane cubics}.
	\end{proof}
	\begin{proposition}\label{prop: monodromy plane cubics}
		The monodromy group of $f:\cY\rightarrow\cX$ is $\Mon_{f}=\ASL_2(\mathbb{F}_3)$.
	\end{proposition}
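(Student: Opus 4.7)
The plan is to combine the upper bound $\Mon_f\subseteq\ASL_2(\mathbb{F}_3)$ from Lemma~\ref{lem: constraint monodromy plane cubics} with a matching lower bound, obtained by mimicking the approach of Proposition~\ref{prop: monodromy general case}: exploit the automorphism groups of special plane cubics via the maps $\omega_x$. Specifically, for every geometric point $x$ of $\cX$ with underlying cubic $C$, the composite
\[
\Aut_\cX(x)\simeq\pi_1^h(\cX,x)\xrightarrow{\omega_x}\pi_1(\cX,x)\twoheadrightarrow\Mon_f\subseteq\fS_9
\]
coincides with the natural permutation action of $\Aut(C)$ on the nine flexes, which is faithful by Lemma~\ref{lem: automorphism plane cubics}; so this composite is injective. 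After fixing paths from a fixed base point of $\cX$ to each chosen special cubic, I view all of the resulting images as sitting inside one common copy of $\Mon_f$.

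Taking $x$ to be a general geometric point first, Lemma~\ref{lem: automorphism plane cubics} presents $\Aut_\cX(x)$ as an extension of $\mC_2$ (the elliptic involution) by $\mC_3\times\mC_3$ (the translations acting simply transitively on flexes). The generic stabilizer therefore already contributes the full translation subgroup $\mathbb{F}_3^2\subset\ASL_2(\mathbb{F}_3)$, together with $-I\in\SL_2(\mathbb{F}_3)$ coming from the elliptic involution, inside the identification $\ASL_2(\mathbb{F}_3)=\mathbb{F}_3^2\rtimes\SL_2(\mathbb{F}_3)$.

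To generate the remainder of $\SL_2(\mathbb{F}_3)$, I would pass to two special Hesse cubics: the one with $j=1728$, whose CM by $\mathbb{Z}[i]$ gives $\Aut_\cY(y)\simeq\mC_4$, and the one with $j=0$, whose CM by $\mathbb{Z}[\zeta_3]$ gives $\Aut_\cY(y)\simeq\mC_6$. A direct computation with the endomorphism rings reduced modulo $3$ shows that $[i]$ acts on $E[3]\simeq\mathbb{Z}[i]/(3)\simeq\mathbb{F}_9$ with order $4$, while $[\zeta_3]$ acts on $E[3]\simeq\mathbb{Z}[\zeta_3]/(3)\simeq\mathbb{F}_3[t]/(t^2)$ with order $3$; both images lie in $\SL_2(\mathbb{F}_3)$ because automorphisms of elliptic curves preserve the Weil pairing. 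Since the abelianization of $\SL_2(\mathbb{F}_3)$ is $\mC_3$, the group $\SL_2(\mathbb{F}_3)$ has no subgroup of index $2$, so any subgroup containing elements of orders $3$ and $4$ has order divisible by $12$ and hence equals $\SL_2(\mathbb{F}_3)$ itself. Combined with the translations from the generic stabilizer, the images assembled above already generate all of $\ASL_2(\mathbb{F}_3)$, matching the upper bound.

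The main subtlety will be the explicit CM computation identifying the orders of the induced actions on $E[3]$, which is straightforward at $j=1728$ (where $3$ is inert in $\mathbb{Z}[i]$) but needs a little care at $j=0$, since $3$ is ramified in $\mathbb{Z}[\zeta_3]$ and $E[3]$ is not a field as a $\mathbb{Z}[\zeta_3]/(3)$-module; one has to check directly that $[\zeta_3]$ indeed acts as an order $3$ element of $\SL_2(\mathbb{F}_3)$ rather than trivially.
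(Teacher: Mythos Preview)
Your proposal is correct and follows essentially the same route as the paper: bound above by Lemma~\ref{lem: constraint monodromy plane cubics}, inject stabilizers via $\omega_x$ using that $\Aut_{\cX}(x)$ acts faithfully on the flexes (Lemma~\ref{lem: automorphism plane cubics}), pick up the translation subgroup $\mathbb{F}_3^2$ together with images of order $4$ and $6$ from the cubics with $j=1728$ and $j=0$, and conclude using that $\SL_2(\mathbb{F}_3)$ has no index~$2$ subgroup. The paper packages the endgame slightly more economically, simply noting that the subgroup generated has order divisible by $9\cdot\mathrm{lcm}(4,6)=108$, hence index at most $2$ in $\ASL_2(\mathbb{F}_3)$, and then invoking the same fact about $\SL_2(\mathbb{F}_3)$.

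One simplification: your explicit CM computation of the action of $[i]$ and $[\zeta_3]$ on $E[3]$ is unnecessary, and your stated worry at the end is unfounded. Since $\Aut_{\cY}(y)$ fixes the marked flex, its image in $\Mon_f\subseteq\ASL_2(\mathbb{F}_3)$ lands in the stabilizer of a point, i.e.\ in $\SL_2(\mathbb{F}_3)$; and Lemma~\ref{lem: automorphism plane cubics} already tells you this action on the flexes is faithful. Hence $\mC_4$ and $\mC_6$ embed in $\SL_2(\mathbb{F}_3)$ automatically, with no need to analyze $\mathbb{Z}[i]/(3)$ or $\mathbb{Z}[\zeta_3]/(3)$.
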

	\begin{proof}
		Since the automorphism group of a plane cubic acts faithfully on the set of flexes, $\Aut_{\cX}(x)$ injects in the monodromy group under $\omega_x$, by the same argument as for Proposition~\ref{prop: monodromy general case}. Thanks to Lemma~\ref{lem: automorphism plane cubics}, we know that the cardinality of $\Aut_{\cX}(x)$ is $|\Aut_{\cX}(x)|=9\cdot|\Aut_{\cY}(y)|$ for any $y$ in $\cY$ mapping to $x$. As $\Aut_{\cY}(y)$ is just the group of automorphism of an elliptic curve, we get that it is cyclic of order 2, 4 or 6, depending on the $j$-invariant. By choosing paths from a fixed point $x$ of $\cX$ to a finite set of points that have large automorphism group, as in the proof of Proposition~\ref{prop: monodromy general case}, we obtain a subgroup $\G$ of $\Mon_f\subset\ASL_2(\mathbb{F}_3)$ of order at least $9\cdot12=158$, hence of index at most 2 in $\ASL_2(\mathbb{F}_3)$, and containing the kernel of $\ASL_2(\mathbb{F}_3)\rightarrow\SL_2(\mathbb{F}_3)$. Since $\SL_2(\mathbb{F}_3)$ does not have index 2 subgroups, we get that $\G=\ASL_2(\mathbb{F}_3)$, and we are done.
	\end{proof}
	\begin{remark}\label{rmk: simplification background plane cubics}
		As for Proposition~\ref{prop: monodromy general case}, we do not need all the results cited above to prove Proposition~\ref{prop: monodromy plane cubics}. Indeed, instead of using Lemma~\ref{lem: automorphism plane cubics}, it is enough to compute the automorphism group of some highly symmetric plane cubic. Regarding Lemma~\ref{lem: constraint monodromy plane cubics}, one could prove a weaker version stating that the monodromy is contained in $\AGL_2(\mathbb{F}_3)$. Indeed, the stabilizer of a flex point in the mondromy group of $f$ preserves the group structure of the set of flexes with one fixed point, which is $\mC_3\times\mC_3$, hence it is contained in $\GL_2(\mathbb{F}_3)$. Since the subgroup $\mC_3\times\mC_3$ appearing in the exact sequence~\eqref{eq: exact sequence automorphisms plane cubics} acts transitively on the set of flexes and it injects into the monodromy, we deduce that the monodromy group injects into $\AGL_2(\mathbb{F}_3)$. To show that $\Mon_f\subset\ASL_2(\mathbb{F}_3)$, one can consider the same problem over $\R$ instead of $\C$. Then, complex conjugation gives an $\R$-automorphism that contributes to the monodromy of the cover, over $\R$. By degree reasons, extending the field to $\C$ annihilates only that monodromy element, hence causing the monodromy to be contained in a proper subgroup of $\AGL_2(\mathbb{F}_3)$. From this and the information we have about the group $\G$ appearing in the above proof, one can deduce the desired result. See~\cite[pages 696, 697]{Har79} for a similar discussion over $\R$.
	\end{remark}
	To avoid working with moduli stacks, one could have considered the generically finite and étale cover $I\rightarrow\P^9$, where $I$ is the incident variety of flex points of plane cubics; see~\cite[Subsection 2.2]{Har79}. The restriction of this cover to the locus of smooth plane cubics is simply the pullback of $f:\cY\rightarrow\cX$ along the quotient map $\P^9\setminus\Delta\rightarrow\cX$. Since $\PGL_3$ is irreducible, by Corollary~\ref{cor: invariance monodromy pullback flat maps with connected fibers} the monodromy group is the same.
	\section{Stack of Objects with $\G$-Actions}\label{sec: G cubic surfaces}
	Given a representable, finite, étale cover $f:\cY\rightarrow\cX$ between irreducible algebraic stacks, we may ask what is the monodromy of the restriction of $f$ to the locus $\cX^{\G}\subset\cX$ of points whose automorphism group contains a prescribed subgroup $\G$. This is particularly interesting when the cover arises as a solution to an enumerative problem. This question was raised in~\cite{BR24}, where the authors solved one instance of this problem; see also Subsection~\ref{subsec: S4 cubic surfaces} for a different treatment of the same example.
	
	Let us first informally motivate the content of this section; see Subsection~\ref{subsec: def main stacks} for the rigorous definitions. Suppose that $\cX$ has generically trivial stabilizer, hence $f:\cY\rightarrow\cX$ is generically FPR, see Definition~\ref{def: FPR covers}, and suppose further that the two stacks admit moduli spaces $\pi_{\cX}:\cX\rightarrow X$ and $\pi_{\cY}:\cY\rightarrow Y$. Then, the induced map $g:Y\rightarrow X$ is a generically finite and generically étale cover of the same degree. However, restricting the cover to the the image $X^{\G}$ of $\cX^{\G}$ in $X$, one could get a cover which is generically finite, generically étale of a smaller degree, hence not necessarily reflecting what one would want to parametrize. This is exemplified in Example~\ref{exm: bad cover cubic surfaces with automorphisms} below. In particular, we would loose the part of the monodromy group coming from the generic stabilizer. This is one reason why it generally helps to directly consider the cover at the level of moduli stacks.
	
	As we have already seen in Section~\ref{sec: fundamental group}, the presence of non-trivial stabilizers is also very useful to generate elements of the monodromy groups.
	
	Another problem is that $X^{\G}$ tends to be non-normal; in particular, a cover $Y^{\G}\rightarrow X^{\G}$ can be connected but not irreducible. Therefore, we cannot apply Corollary~\ref{cor: independence open substack}, which also introduces an ambiguity in how to define the monodromy group of $Y^G\rightarrow X^G$ if it is ramified. The same issue arises for $\cX^{\G}$.
	\begin{example}\label{exm: bad cover cubic surfaces with automorphisms}
		An example of this phenomenon is the cover $\cF:\cL\rightarrow\cM$ of the moduli stack $\cM$ of smooth cubic surfaces induced by the 27 lines, see Subsection~\ref{subsec: lines on cubics generic case}. The generic cubic surface that carries a non-trivial automorphism has automorphism group equal to $\mC_2$, and has exactly one \emph{Eckardt point}. An Eckardt point is a point where three of the 27 lines meet, see Definition~\ref{def: Eckardt points}. Over the locus $\cZ$ of points with exactly 1 Eckardt point and automorphism group $\mC_2$, the cover $\cL\rightarrow\cM$ is reducible, and it breaks into a degree 3 cover parametrizing lines passing through the Eckard point, and a degree 24 cover parametrizing the remaining lines. The group $G:=\mC_2$ acts trivially on the first set of lines, hence the corresponding cover is FPR. On the other hand, $G$ acts freely on the second set of lines, hence the induced cover at the level of coarse moduli spaces is of degree 12. In other words, the corresponding factorization of $\cF|_{\cM^{\G}}$ coming from Lemma~\ref{lem: factorization FPR and gerbe} consists in a degree 2 Galois cover and a FPR cover of degree 12. Finally, notice that $\cL^{\G}$ is connected, even though it is reducible, as the stabilizer group of the point in $\cM^{\G}$ corresponding to the Fermat cubic acts transitively on the 27 lines. This phenomenon  arises as $\cM^{\G}$ is non-normal, as a cubic surface with exactly 2 Eckardt points can be deformed into a cubic surface with exactly 1 Eckardt point in two independent ways.
	\end{example}
	As explained above, $\cX^{\G}$ has some characteristics that make it undesirable to work with. In Subsection~\ref{subsec: def main stacks} we introduce a smooth moduli stack $\cX_{\G}$ which serves as a well-behaved replacement for $\cX^{\G}$. The two stacks are related by a finite morphism $\cX_{\G}\rightarrow\cX^{\G}$. We then study the monodromy group of the induced cover $\cY_{\G}\rightarrow\cX_{\G}$, and relate it to the monodromy groups of the covers $\cY^{\G}\rightarrow\cX^{\G}$ and $Y^{\G}\rightarrow X^{\G}$.
	In some cases, this gives also a way to find a concrete presentation of $\cX^{\G}$ as a quotient stack.
	\subsection{Definitions of the Main Stacks}\label{subsec: def main stacks}
	
	In this subsection we rigorously define the stacks $\cX^{\G}$ and $\cX_{\G}$. Since these notions are not well behaved in general, we will add some assumptions to simplify the treatment; however, not all are strictly necessary.
	\begin{setup}\label{set: setup fixed point stack}
		Throughout this section, we will always assume $\cX$ to be a separated Deligne-Mumford stack of finite type over an algebraically closed field $k$. In particular, $\cX$ admits a coarse moduli space $\pi_{\cX}:\cX\rightarrow X$, by Keel-Mori Theorem~(\cite[Corollary 1.3]{KM97} and~\cite[Theorem 1.1]{Con05}), and $\pi_{\cX}$ is a proper universal homeomorphism.
		We will always assume $\cX$ to be tame, that is,
		for every geometric point $x$ of $\cX$ the stabilizer group of $x$ is linearly reductive. We will denote by $\G$ a finite group of order coprime to the characteristic of $k$ (if positive), hence linearly reductive.
	\end{setup}
	\begin{remark}\label{rmk: base change of tame moduli space}
		Under these hypotheses, for every morphism of schemes $B\rightarrow X$, the fiber product $\cX\times_XB$ is a tame Deligne-Mumford stack with moduli space $B$, see~\cite[Corollaries 3.3, 3.4]{AOV08}.
	\end{remark}
	Defining a substack $\cX^{\G}\subset\cX$ that parametrizes objects admitting a $\G$-action is a delicate problem, and it is not clear what its moduli description should be. Therefore, we start by defining the stack $\cX_{\G}$, which comes with a finite forgetful morphism $\cX_{\G}\rightarrow\cX$, whose image will be $\cX^{\G}$.
	
	\begin{definition}\label{def: stack of objects with G action}
		A \emph{$\G$-object} of $\cX$ over a scheme $T$ is a pair $(x,\rho)$ where $x\in\cX(T)$ and $\rho:\G_T\hookrightarrow\uAut_{\cX}(x)$ is a closed immersion and a homomorphism of group schemes over $T$. We denote by $\cX_{\G}$ the category fibered in groupoids over $\Sch_{\text{fppf}}$ whose objects are $\G$-objects and an arrow $\alpha:(x_1,\rho_1)\rightarrow(x_2,\rho_2)$ is a $\G$-equivariant morphism $\alpha:x_1\rightarrow x_2$ in $\cX$. This means that for every $g\in\G$ the diagram
		\[
		\begin{tikzcd}
			x_1\arrow[d,"\rho_1(g)"']\arrow[r,"\alpha"] & x_2\arrow[d,"\rho_2(g)"]\\
			x_1\arrow[r,"\alpha"] & x_2
		\end{tikzcd}
		\]
		is commutative.
	\end{definition}
	\begin{lemma}\label{lem: closed immersion = universally injective}
		Let $T$ be a scheme, $x\in\cX(T)$ and $\rho:\G_T\rightarrow\uAut_{\cX}(x)$ be a morphism over $T$. Then, $\rho$ is a closed immersion if and only if it is universally injective.
	\end{lemma}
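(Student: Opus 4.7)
The forward direction is immediate, since any closed immersion is universally injective, so the content lies in the converse. The plan is to recognize $\rho$ as a proper monomorphism and then invoke the standard fact that a proper monomorphism of schemes is a closed immersion.

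First I would record the geometric inputs on source and target. Because $\G$ has order coprime to $\charac(k)$ (and is in particular a finite constant group scheme), the map $\G_T\to T$ is finite étale. On the other hand, since $\cX$ is a separated Deligne-Mumford stack over $k$, the diagonal $\Delta_{\cX}$ is proper and unramified, so pulling back along the map $(x,x):T\to\cX\times\cX$ shows that $\uAut_{\cX}(x)\to T$ is proper and unramified, hence finite. Consequently $\rho:\G_T\to\uAut_{\cX}(x)$ is a morphism between two finite $T$-schemes, and is in particular automatically proper.

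Next I would argue that $\rho$ is unramified. This follows from the ``cancellation'' property of unramified morphisms: the composite $\G_T\xrightarrow{\rho}\uAut_{\cX}(x)\to T$ is étale while the second factor is unramified, forcing $\rho$ itself to be unramified. At this point $\rho$ is proper, unramified, and by hypothesis universally injective. A morphism is a monomorphism of schemes precisely when it is unramified and universally injective (equivalently, radicial and formally unramified), so $\rho$ is a proper monomorphism, and therefore a closed immersion by the standard characterization (see, e.g., \cite[\href{https://stacks.math.columbia.edu/tag/04XV}{Tag 04XV}]{Sta25}).

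I do not expect any serious obstacle; the only subtlety is being careful that ``separated and Deligne-Mumford'' really gives finiteness of $\uAut_{\cX}(x)\to T$ (using that the diagonal is proper and unramified, not merely quasi-compact), and that universally injective plus unramified is genuinely equivalent to being a monomorphism. Both are standard, and together with properness of $\rho$ they reduce the claim to the monomorphism criterion for closed immersions.
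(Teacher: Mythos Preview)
Your proof is correct and follows essentially the same route as the paper's: both use that $\G_T\to T$ is finite \'etale and that $\uAut_{\cX}(x)\to T$ is proper and unramified (as a base change of the diagonal of a separated Deligne--Mumford stack), deduce that $\rho$ is proper and unramified, and then conclude via \cite[\href{https://stacks.math.columbia.edu/tag/04XV}{Tag 04XV}]{Sta25}. The only cosmetic difference is that you make the intermediate step ``unramified $+$ universally injective $\Rightarrow$ monomorphism'' explicit, whereas the paper packages this together with properness directly into the citation.
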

	\begin{proof}
		Recall that a closed immersion is always universally injective, and the converse holds if the morphism is further assumed to be proper and unramified, see~\cite[\href{https://stacks.math.columbia.edu/tag/04XV}{Lemma 04XV}]{Sta25}. On the other hand, $\G_T\rightarrow T$ is both proper and unramified, hence it is enough to show that the natural morphism $\varphi:\uAut_{\cX}(x)\rightarrow T$ satisfies the same properties. The statement follows from the fact that $\varphi$ is obtained as a base change from the diagonal morphism $\cX\rightarrow\cX\times\cX$, which is proper and unramified, as $\cX$ is a separated Deligne-Mumford stack.
	\end{proof} 
	The stack $\cX_{\G}$ has already been studied by Romagny in~\cite{Rom05} and~\cite{Rom22}, in greater generality. Specifically, he introduces the notion of fixed point stack associated to an action of a group $\G$ on a stack $\cX$. When the action is trivial, the fixed point stack parametrizes $\G$-objects of $\cX$, with the only difference that $\rho$ is not required to be a faithful action; he denotes this stack by $\cX[\G]$. Under our assumptions, $\cX[\G]$ is an algebraic stack with a representable map $\cX[\G]\rightarrow\cX$, and $\cX_{\G}$ is an open substack of it; Romagny denotes $\cX_{\G}$ by $\cX\{\G\}$.
	\begin{proposition}\label{prop: basic facts about X{G}}
		Let $\cX$ and $\G$ be as in Setup~\ref{set: setup fixed point stack}. Then $\cX_{\G}$ is a tame Deligne-Mumford stack, and the morphism
		\[
		\begin{tikzcd}
			\Phi_{\G}:\cX_{\G}\arrow[r] & \cX
		\end{tikzcd}
		\]
		that forgets the $\G$-action is representable by schemes, finite and unramified. Moreover, if $\cX$ is smooth, then also $\cX_{\G}$ is.
	\end{proposition}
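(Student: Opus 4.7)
The plan is to exploit the local structure theorem for tame Deligne-Mumford stacks together with the classical fact that fixed loci of linearly reductive actions on smooth schemes are smooth. As noted in the excerpt, $\cX_{\G}$ is an open substack of Romagny's fixed point stack $\cX[\G]$, the openness being guaranteed by Lemma~\ref{lem: closed immersion = universally injective} since the universal injectivity of $\rho$ is an open condition on the base. Algebraicity and the basic properties of $\Phi_{\G}$ can thus be inherited from~\cite{Rom05, Rom22}, but I would also give a direct argument.

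For representability and finiteness, fix a morphism $T\rightarrow\cX$ corresponding to $x\in\cX(T)$. The fiber product $\cX_{\G}\times_{\cX}T$ represents the functor assigning to $T'\rightarrow T$ the set of closed immersions of group schemes $\G_{T'}\hookrightarrow\uAut_{\cX}(x)|_{T'}$. Tameness of $\cX$ makes $\uAut_{\cX}(x)\rightarrow T$ finite étale, so after passing to an étale cover of $T$ trivializing both $\G_T$ and $\uAut_{\cX}(x)$, the functor $\uHom_{gp}(\G_T,\uAut_{\cX}(x))$ is represented by a finite disjoint union of copies of $T$ indexed by set-theoretic group homomorphisms. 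The injective homomorphisms form an open and closed union of these components, so $\Phi_{\G}$ is representable by finite étale schemes, hence in particular finite and unramified. Since $\Phi_{\G}$ is representable, $\cX_{\G}$ inherits the Deligne-Mumford property from $\cX$.

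For tameness, compute the stabilizer of $\cX_{\G}$ at a geometric point $(x,\rho)$: it is the centralizer $C_{\Aut_{\cX}(x)}(\rho(\G))$, a subgroup of the linearly reductive finite group $\Aut_{\cX}(x)$. Its order divides $|\Aut_{\cX}(x)|$ and is thus coprime to $\charac(k)$, so it is linearly reductive, yielding tameness of $\cX_{\G}$.

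The main obstacle is smoothness, which I would handle via the local structure theorem for tame Deligne-Mumford stacks: étale-locally around a geometric point $x$, the stack $\cX$ admits a presentation $[U/\Aut_{\cX}(x)]$ with $U$ smooth and a fixed point $u\in U$ mapping to $x$. The local model for $\cX_{\G}$ around $(x,\rho)$ is then $[U^{\rho(\G)}/C_{\Aut_{\cX}(x)}(\rho(\G))]$, where $U^{\rho(\G)}$ denotes the scheme of fixed points for the action of $\rho(\G)$ on $U$. Since $\rho(\G)$ is linearly reductive and $U$ is smooth, Iversen's theorem guarantees that $U^{\rho(\G)}$ is smooth, and the quotient by the finite linearly reductive centralizer is a smooth tame Deligne-Mumford stack. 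Verifying that this is indeed a local model for $\cX_{\G}$---that $\G$-equivariant structures on a family deforming $x$ correspond exactly to lifts of $u$ into $U^{\rho(\G)}$ along the linearized action---is essentially the content of Romagny's construction, which can be cited directly for this final step.
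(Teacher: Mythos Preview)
Your direct argument contains a genuine error: the claim that tameness of $\cX$ makes $\uAut_{\cX}(x)\to T$ finite \emph{étale} is false. The diagonal of a separated Deligne--Mumford stack is finite and unramified, so $\uAut_{\cX}(x)\to T$ is finite unramified, but it need not be flat, since the order of the automorphism group can jump along $T$. For instance, take $\cX=[\A^1/\mC_2]$ with $\mC_2$ acting by $t\mapsto -t$ in characteristic $\neq 2$, and let $T=\A^1\to\cX$ be the quotient map; then $\uAut_{\cX}(x)$ has fiber of order $2$ over the origin and trivial fiber elsewhere. Consequently you cannot trivialize $\uAut_{\cX}(x)$ by an étale cover of $T$, the Hom-scheme $\uHom_{gp}(\G_T,\uAut_{\cX}(x))$ is not a disjoint union of copies of $T$, and your conclusion that $\Phi_{\G}$ is finite \emph{étale} is strictly stronger than what holds: in this same example $\cX_{\mC_2}$ is a single point and $\Phi_{\mC_2}$ is a closed immersion, unramified but certainly not étale. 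The proposition only asserts finite and unramified, and that is all one gets.

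The paper's own proof is a bare citation to Romagny's results in~\cite{Rom05,Rom22}. Your arguments for tameness (the stabilizer at $(x,\rho)$ is the centralizer of $\rho(\G)$ in $\Aut_{\cX}(x)$, hence of order prime to the characteristic) and for smoothness (local structure theorem plus smoothness of fixed loci under linearly reductive groups) are sound in outline and give more insight than a citation alone, though for the local model of $\cX_{\G}$ you end up deferring to Romagny anyway. If you repair the first part---arguing representability and finiteness via the Hom-scheme between finite unramified group schemes, and concluding only that $\Phi_{\G}$ is finite unramified---the sketch becomes a reasonable expansion of what the paper leaves implicit.
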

	\begin{proof}
		See~\cite[Theorem 3.3, Corollary 3.11]{Rom05} and~\cite[Theorem 4.1.4, Theorem 4.3.6, Corollary 4.4.6]{Rom22}, where $\cX_{\G}=\cX\{\G\}$.
	\end{proof}
	We are ready to define the closed substack $\cX^{\G}\subseteq\cX$. We use the notion of scheme-theoretic image as defined in~\cite{EG15} and~\cite[\href{https://stacks.math.columbia.edu/tag/0CMH}{Tag 0CMH}]{Sta25}, which is a closed substack of the target.
	\begin{definition}\label{def: stack of G objects no action}
		We denote by $\cX^{\G}$ the closed substack of $\cX$ that is the image of $\Phi_{\G}:\cX_{\G}\rightarrow\cX$, and by $X^{\G}$ the image of $\cX^{\G}$ under the moduli space map $\pi_{\cX}$, which is a closed subscheme of $X$.
	\end{definition}
	Although the notation might be misleading, these stacks are not fixed point stacks with respect to some $G$-action, in the sense of Romagny (\cite{Rom05},~\cite{Rom22}).
	The following lemma provides a local description of $\cX^{\G}$.
	\begin{lemma}\label{lem: stack of G objects no action properties}
		The stacks $\cX_{\G}$ and $\cX^{\G}$ satisfy the following properties.
		\begin{enumerate}
			\item\label{item:1} The construction of $\cX_{\G}$ and $\cX^{\G}$ commutes with flat morphisms $\cY\rightarrow\cX$ that induce isomorphisms between automorphism groups.
			\item\label{item:2} Suppose $\cX=[U/\H]$ with $\H$ finite and linearly reductive group acting on a scheme $U$ separated and of finite type over $k$. For every injective homomorphism $\varphi:\G\hookrightarrow\H$, let $U^{\varphi}$ be the fixed point scheme of the induced action of $\G$ on $U$, and let $U^{\G}=\cup_{\varphi}U^{\varphi}$. Then, $U^{\varphi}$ and $U^{\G}$ are closed in $U$, and $\cX^{\G}=[U^{\G}/\H]$.
			\item\label{item:3} The morphism $\pi_{\cX^{\G}}:\cX^{\G}\rightarrow X^{\G}$ is a coarse moduli space map, and the induced morphism $\cX^{\G}\rightarrow\cX\times_{X}X^{\G}$ is a universal homeomorphism.
			\item\label{item:4} If $\cX$ is smooth then $\cX^{\G}$ is reduced.
		\end{enumerate}		
	\end{lemma}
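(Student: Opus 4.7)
The plan is to treat the four assertions in order, reducing (3) and (4) to the concrete description in (2).

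For (1), I would observe that a $\G$-object of $\cY$ over a scheme $T$ is a pair $(y,\rho)$ with $\rho:\G_{T}\hookrightarrow\uAut_{\cY}(y)$, and the hypothesis that $f:\cY\rightarrow\cX$ induces isomorphisms $\uAut_{\cY}(y)\xrightarrow{\sim}\uAut_{\cX}(f(y))$ lets one transport such a $\rho$ to a closed immersion into $\uAut_{\cX}(f(y))$, giving a functorial bijection of $\G$-objects and the identification $\cY_{\G}\simeq\cY\times_{\cX}\cX_{\G}$. The compatibility of $\cX^{\G}$ with such base change then follows because $\Phi_{\G}$ is finite and hence proper, so its scheme-theoretic image commutes with flat base change.

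For (2), the closedness of $U^{\varphi}$ follows from writing it as $U^{\varphi}=\bigcap_{g\in\G}U^{\varphi(g)}$, where each $U^{\varphi(g)}$ is the equalizer of $\mathrm{id}_{U}$ and the action by $\varphi(g)$, which is closed as $U$ is separated. Since $\H$ and $\G$ are finite there are only finitely many injections $\varphi$, so $U^{\G}$ is closed, and it is $\H$-invariant because $h\cdot U^{\varphi}\subseteq U^{h\varphi h^{-1}}$. To obtain $\cX^{\G}=[U^{\G}/\H]$, I would compute $\cX_{\G}\times_{\cX}U$: a $T$-point $u:T\rightarrow U$ yields an object of $[U/\H]$ whose group scheme of automorphisms is the stabilizer subscheme of $u$ inside $\H_{T}$, so by Lemma~\ref{lem: closed immersion = universally injective} a faithful $\G$-action on this object corresponds, locally on $T$, to an injective $\varphi:\G\hookrightarrow\H$ together with a factorization $T\rightarrow U^{\varphi}$. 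This identifies $\cX_{\G}\times_{\cX}U\simeq\bigsqcup_{\varphi}U^{\varphi}$, and the scheme-theoretic image of this morphism in $U$ is precisely the scheme-theoretic union $U^{\G}$, which descends to $[U^{\G}/\H]$ as the scheme-theoretic image of $\Phi_{\G}$ since $U\rightarrow\cX$ is smooth.

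For (3), $\cX^{\G}$ is a closed substack of the tame Deligne-Mumford stack $\cX$, hence itself tame DM, and admits a coarse moduli space which, since coarse moduli spaces commute with closed immersions (or by applying Remark~\ref{rmk: base change of tame moduli space}), is exactly $X^{\G}=\pi_{\cX}(\cX^{\G})$. Both $\cX^{\G}$ and $\cX\times_{X}X^{\G}$ are closed substacks of $\cX$; the former factors through the latter by definition, and they share the same underlying topological space because $\pi_{\cX}$ is a universal homeomorphism, so the natural closed immersion between them is bijective on points and therefore a universal homeomorphism. For (4), since $\cX$ is tame DM and smooth, it is étale-locally of the form $[U/\H]$ with $U$ smooth and $\H$ finite linearly reductive; as reducedness is étale-local, it suffices by (2) to show $U^{\G}$ is reduced. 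The classical fact that fixed loci of linearly reductive group actions on smooth schemes are smooth shows each $U^{\varphi}$ is smooth, hence reduced, and the scheme-theoretic union $U^{\G}$ of finitely many reduced closed subschemes (defined by an intersection of radical ideals) is reduced.

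The main obstacle is the careful identification of $\cX_{\G}\times_{\cX}U$ in part (2): the injection $\varphi$ can depend on the connected component of $T$, and one must verify that the scheme-theoretic image of $\Phi_{\G}$ genuinely descends along the smooth atlas $U\rightarrow\cX$ to give the closed substack $[U^{\G}/\H]$.
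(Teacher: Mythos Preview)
Your arguments for (1) and (2) are essentially the paper's, just organized slightly differently; in (2) you make the identification $\cX_{\G}\times_{\cX}U\simeq\bigsqcup_{\varphi}U^{\varphi}$ explicit, whereas the paper only shows that $\Phi_{\G}$ factors through $[U^{\G}/\H]$ and is surjective, but the content is the same.

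There is a real gap in your treatment of (3). You assert that the coarse moduli space of $\cX^{\G}$ is $X^{\G}$ because ``coarse moduli spaces commute with closed immersions'' or by Remark~\ref{rmk: base change of tame moduli space}. Neither works: the remark only tells you that $\cX\times_{X}X^{\G}\to X^{\G}$ is a coarse moduli map, and $\cX^{\G}$ is not of the form $\cX\times_{X}B$; the general compatibility of coarse spaces with closed immersions is exactly the nontrivial statement you are trying to prove. Your universal-homeomorphism argument for $\cX^{\G}\hookrightarrow\cX\times_{X}X^{\G}$ is fine and independent of this, but it does not by itself identify the coarse space of $\cX^{\G}$, since a nilpotent closed immersion need not induce an isomorphism on coarse moduli. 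The paper's fix is to let $Z$ be the coarse moduli space of $\cX^{\G}$, obtain $f:Z\to X^{\G}$ from the universal property, and show $f$ is a \emph{monomorphism}: given $g_{1},g_{2}:W\to Z$ with $f\circ g_{1}=f\circ g_{2}$, pull back along $\cX^{\G}\hookrightarrow\cX\times_{X}X^{\G}$ to get equal maps on the stacky fiber $\cW$, then use that $\cW\to W$ is itself a coarse moduli map (this is where Remark~\ref{rmk: base change of tame moduli space} actually enters) to conclude $g_{1}=g_{2}$. Then $f$ is a proper monomorphism, hence a closed immersion, and comparing with the definition of $X^{\G}$ as the scheme-theoretic image forces $Z=X^{\G}$.

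For (4) your route is different from the paper's and arguably more elementary: you reduce to the local model and use that each $U^{\varphi}$ is smooth and that an intersection of radical ideals is radical. The paper instead invokes Proposition~\ref{prop: basic facts about X{G}} to say $\cX_{\G}$ is smooth, hence reduced, and then observes that the scheme-theoretic image of a reduced stack is reduced. Both are valid; yours avoids the dependence on Romagny's smoothness result, while the paper's is a one-line consequence of what has already been established.
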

	\begin{proof}
		Suppose given a flat morphism $f:\cY\rightarrow\cX$ such that for every scheme $T$ and object $y\in\cY(T)$ the map $\uAut_{\cY}(y)\rightarrow\uAut_{\cX}(f(y))$ is an isomorphism, and form the fibered product $\cZ:=\cX_{\G}\times_{\cX}\cY$; we need to prove that $\cZ\simeq\cY_{\G}$. Recall that an object of $\cZ(T)$ is a triple $((x,\rho),y,\alpha)$ where $(x,\rho)\in\cX_{\G}(T)$, $y\in\cY(T)$ and $\alpha:f(y)\rightarrow x$ is an isomorphism in $\cX$. Then,
		\[
		\begin{tikzcd}
			\G_T\arrow[r,hookrightarrow,"\rho"] & \uAut_{\cX}(x)\simeq\uAut_{\cX}(f(y))\simeq\uAut_{\cY}(y)
		\end{tikzcd}
		\]
		yields an element of $\cY_{G}(T)$. Therefore, $\cZ\rightarrow\cY$ factors through $\cY_{\G}$. The inverse $\cY_{\G}\rightarrow\cZ$ sends a $T$-object $(y,\rho)$ to the triple $((f(y),\rho),y,\mathrm{id})$, where $\rho$ is seen as a injection into $\uAut_{\cX}(x)$ thanks to the isomorphisms above. Since the formation of the scheme-theoretic image commutes with flat base change by~\cite[\href{https://stacks.math.columbia.edu/tag/0CMH}{Tag 0CMH}]{Sta25}, property~\eqref{item:1} holds for $\cX^{\G}$ as well.
		
		Now, we prove point~\eqref{item:2}. The fact that $U^{\varphi}$ is closed follows from~\cite[Proposition A.8.10]{CGP15}. Since the set of injective homomorphisms $\varphi:\G\hookrightarrow\H$ is finite, $U^{\G}$ is closed as well. Now, we show that the morphism $\Phi_{\G}:\cX_{\G}\rightarrow\cX$ factors through the closed substack $\cZ:=[U^{\G}/\H]\subset\cX$. Let $(x,\rho)\in\cX_{\G}(T)$, thus $x\in\cX(T)$ corresponds to an $\H$-torsor $p:P\rightarrow T$ and an $\H$-equivariant morphism $u:P\rightarrow U$. To show that $x\in\cZ(T)$, we can pass to fppf covers and take connected components. In particular, we can assume that $P\simeq T\times\H$ is the trivial torsor, and $T$ to be connected. Since 
		$\H$ is a finite group, we have $\Aut(x)\subset\H$ by sending an automorphism $\psi$ of $T\times\H$ to the element $h\in\H$ such that $\psi(T\times\{1\})=T\times\{h\}$. This yields an injective homomorphism $\varphi:\G\hookrightarrow\H$, and by construction $u:T\times\H\rightarrow U$ factors through $\cup_{h\in\H}U^{h\varphi h^{-1}}$. This shows that $\Phi_{\G}$ factors through $\cX_{\G}\rightarrow[U^{\G}/\H]$, which is surjective, hence $\cX^{\G}=[U^{\G}/\H]$.
		
		For point~\eqref{item:3}, recall that $\cX\times_{X}X^{\G}\rightarrow X^{\G}$ is a coarse moduli space map by Remark~\ref{rmk: base change of tame moduli space}. Let $\cX^{\G}\rightarrow Z$ be the coarse moduli space map of $\cX^{\G}$, and let $f:Z\rightarrow X^{\G}$ be the morphism induced by the universal property of $Z$. We claim that $f$ is a monomorphism. Suppose given two morphisms $g_1,g_2:W\rightarrow Z$ such that $f\circ g_1=f\circ g_2$, and construct the cartesian diagram
		\[
		\begin{tikzcd}
			\cW\arrow[r,shift right,"\widetilde{g}_2"']\arrow[r,shift left,"\widetilde{g}_1"]\arrow[d,"\pi_{\cW}"'] & \cX^{\G}\arrow[r,hookrightarrow]\arrow[d] & \cX\times_{X}X^{\G}\arrow[d]\\
			W\arrow[r,shift right,"g_2"']\arrow[r,shift left,"g_1"] & Z\arrow[r] & X^{\G}.
		\end{tikzcd}
		\]
		It follows that $\widetilde{g}_1=\widetilde{g}_2$; since $\pi_{\cW}:\cW\rightarrow W$ is a coarse moduli space by Remark~\ref{rmk: base change of tame moduli space}, this implies that $g_1=g_2$.	Then, $f$ is a proper monomorphism, hence a closed immersion by~\cite[\href{https://stacks.math.columbia.edu/tag/04XV}{Tag 04XV}]{Sta25}. Since by definition $X^{\G}$ is the image of $\cX^{\G}$ under $\pi_{\cX}$, we get $Z=X^{\G}$, as wanted. Finally, the closed immersion $\cX^{\G}\hookrightarrow\cX\times_{X}X^{\G}$ is a universal homeomorphism, for instance by~\cite[\href{https://stacks.math.columbia.edu/tag/0H2M}{Tag 0H2M}]{Sta25}.
		
		By Proposition~\ref{prop: basic facts about X{G}}, if $\cX$ is smooth then $\cX_{\G}$ is smooth as well, thus reduced. Therefore, the image $\cX^{\G}$ of $\Phi_{\G}$ is also reduced, proving property~\eqref{item:4}.
	\end{proof}
	Unfortunately, it is not clear how to characterize the objects parametrized by $\cX^{\G}$, contrarily to what happens for $\cX_{\G}$, showcasing another advantage of working with the latter.
	
	Other than associating stacks $\cX_{\G}$ to linearly reductive groups $\G$, we can associate morphisms $\cX_{\G}\rightarrow\cX_{\H}$ to injective homomorphisms $\H\hookrightarrow\G$.
	\begin{definition}\label{def: action of AutG}
		Let $\psi:\H\hookrightarrow\G$ be an injective homomorphism of finite linearly reductive group schemes over $k$. Then, this induces a map
		\[
		\begin{tikzcd}
			\Phi_{\psi}:\cX_{\G}\arrow[r] & \cX_{\H}
		\end{tikzcd}
		\]
		sending an object $(x,\rho:\G\hookrightarrow\Aut(f))\in\cX_{\G}$ to $(x,\rho\circ\psi:\H\hookrightarrow\Aut(f))$, while the arrows stay the same as they are also $\H$-equivariant by construction.
	\end{definition}
	\begin{remark}
		Notice that in the above hypothesis there are closed immersions $\cX^{\G}\hookrightarrow\cX^{\H}$ and $X^{\G}\hookrightarrow X^{\H}$. Indeed, $\cX^{\G}=(\cX^{\H})^{\G}$, and $\cX^{\H}$ satisfies the properties of Setup~\ref{set: setup fixed point stack}.
	\end{remark}
	\begin{remark}\label{rmk: special case Phi H=0 H=G}				
		When $\H=\{1\}$, we omit both $\psi$ and $H$ from the notation, and we recover the map $\Phi_{\G}:\cX_G\rightarrow\cX^{\G}\subset\cX$.
		
		In the case where $\H=\G$, we have $\psi\in\Aut(\G)$ and $\Phi_{\psi}$ is an automorphism with inverse $\Phi_{\psi^{-1}}$.
		This defines an action of $\Aut(\G)$ on $\cX_{\G}$.
		We can restrict the action of $\Aut(\G)$ to the group of inner automorphisms $\Inn(\G)$, in particular to $\G$ under the surjective map 
		\[
		\begin{tikzcd}[row sep=tiny]
			\G\arrow[r] & \Inn(\G), & g\arrow[r,mapsto] & \psi_g=(h\mapsto ghg^{-1})
		\end{tikzcd}
		\]
		with kernel the centralizer $\Zen(\G)$ of $\G$. Notice that the action of $\G$ on $\cX_{\G}$ is trivial. To see this, let $g\in\G$ and $(x,\rho)$ be an object in $\cX_{\G}(T)$. Then, $(x,\rho)$ is isomorphic to $(x,\rho\circ\psi_g)$ via $\rho(g):x\rightarrow x$, as
		\[
		(\rho\circ\psi_g)(h)\circ\rho(g)=\rho(g)\circ\rho(h)\circ\rho(g^{-1})\circ\rho(g)=\rho(g)\circ\rho(h)
		\]
		by construction, hence the map is equivariant. This defines a 2-isomorphism between the $\G$-action and the projection $G\times_k\cX_G\rightarrow\cX_G$; this is not surprising because of the interpretation of $\cX_{\G}$ as a fixed point stack.
		
		Notice that in place of $g$ one could have chosen $g\circ z:x\rightarrow x$ for any $z$ in the center $\Zen(\G)$ of $\G$. When $\Zen(\G)=0$, this shows that the action of $\mathrm{Inn}(\G)$ is trivial, thus we get an induced action of $\Out(\G):=\Aut(\G)/\Inn(\G)$ on $\cX_{\G}$. Recall that $\Out(\G)$ is representable, see for instance~\cite[Exposé 24, Theorem 1.3]{SGA3}.
	\end{remark}
	\subsection{Comparison Properties}
	We start by comparing the stabilizer groups of $\cX_{\G}$ with those of $\cX^{\G}$. The proof of the following lemma is straightforward.
	\begin{lemma}\label{lem: Aut comparison PhiG}
		Let $T$ be a scheme and $(x,\rho)\in\cX_{\G}(T)$. Then:
		\begin{enumerate}
			\item The group $\Aut_{\cX_{\G}}((x,\rho))$ is equal to the centralizer $\Zen(\G,\Aut_{\cX}(x))$ of $\G$ in $\Aut_{\cX}(x)$. In particular, for every $g\in\G$, we have that $\rho(g)\in\Aut_{\cX}(x)$ is in the image of $\Phi_{\G}$ if and only if $g\in\Zen(\G)$.
			\item If $\G=\Aut_{\cX}(x)$, then $\Aut_{\cX}((x,\rho))=\Zen(\G)$.
		\end{enumerate}
	\end{lemma}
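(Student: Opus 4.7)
The plan is to unwind the definitions carefully; the statement is essentially formal once the notation is unfolded. From Definition~\ref{def: stack of objects with G action}, an arrow in $\cX_{\G}(T)$ between $\G$-objects $(x,\rho)$ and $(x,\rho)$ is, by construction, a morphism $\alpha\colon x\to x$ in $\cX(T)$ satisfying $\alpha\circ\rho(g)=\rho(g)\circ\alpha$ for every $g\in\G_T$. Thus an automorphism of $(x,\rho)$ in $\cX_{\G}$ is exactly an $\alpha\in\Aut_{\cX}(x)$ that commutes with each element of $\rho(\G_T)\subseteq\Aut_{\cX}(x)$. Invoking the injectivity of $\rho$ (ensured by Lemma~\ref{lem: closed immersion = universally injective}, since $\rho$ is a closed immersion and hence universally injective), this commutation condition is precisely the defining condition of the centralizer $\Zen(\G,\Aut_{\cX}(x))$, which establishes the identification in part (1).

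For the second sentence of (1), I would note that the map $\Phi_{\G}$ induces exactly the inclusion $\Aut_{\cX_{\G}}((x,\rho))=\Zen(\G,\Aut_{\cX}(x))\hookrightarrow\Aut_{\cX}(x)$ just obtained. Hence for $g\in\G$, the element $\rho(g)$ lies in the image of $\Phi_{\G}$ on automorphism groups if and only if $\rho(g)\in\Zen(\G,\Aut_{\cX}(x))$, which holds if and only if $\rho(g)$ commutes with $\rho(h)$ for every $h\in\G_T$. Using injectivity of $\rho$ once more, this is equivalent to $gh=hg$ for every $h\in\G$, i.e.\ to $g\in\Zen(\G)$.

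Part (2) then follows by specializing part (1): when $\G=\Aut_{\cX}(x)$, the centralizer of $\G$ inside $\Aut_{\cX}(x)$ is tautologically the center $\Zen(\G)$.

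I do not anticipate any significant obstacle; this is a short bookkeeping of definitions. The only point worth emphasizing in writing is to justify, at each equivalence, the passage between the identity $\rho(g)\rho(h)=\rho(h)\rho(g)$ in $\Aut_{\cX}(x)$ and $gh=hg$ in $\G_T$, which rests entirely on the injectivity of $\rho$ recalled above.
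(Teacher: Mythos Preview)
Your proposal is correct and matches the paper's approach: the paper states that the proof is straightforward and omits it entirely, and what you have written is precisely the routine unwinding of Definition~\ref{def: stack of objects with G action} that the author had in mind. There is nothing to add.
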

	We will see that $\Phi_{\G}$ is particularly well behaved over the locus $\cX^{=\G}$ in $\cX^{\G}$ where the automorphism group is equal to $\G$, which we show to be open in $\cX^{\G}$.
	\begin{lemma}
		The locus of field-valued points $x$ of $\cX^{\G}$ where $\Aut_{\cX}(x)=\G$ is open in $\cX^{\G}$, and defines an open substack $\cX^{=\G}$.
	\end{lemma}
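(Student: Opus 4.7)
The plan is to verify openness étale-locally on $\cX^{\G}$. Since $\cX$ is a tame separated Deligne-Mumford stack of finite type over $k$, the local structure theorem for tame DM stacks (\cite[Theorem 3.2]{AOV08}) produces an étale cover by charts of the form $[U/\H]$ with $U$ affine of finite type over $k$ and $\H$ a finite linearly reductive group. Parts~\eqref{item:1} and~\eqref{item:2} of Lemma~\ref{lem: stack of G objects no action properties} show that the formation of $\cX^{\G}$ is compatible with this étale base change and that on such a chart it becomes $[U^{\G}/\H]$, where $U^{\G}=\bigcup_{\varphi:\G\hookrightarrow\H}U^{\varphi}$.

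For $u\in U$ mapping to $x\in\cX$, the group $\Aut_{\cX}(x)$ is the stabilizer of $u$ in $\H$. On points of $U^{\G}$ this stabilizer already contains a subgroup isomorphic to $\G$, so the intrinsic condition $\Aut_{\cX}(x)\simeq\G$ is equivalent to the cardinality condition $|\Aut_{\cX}(x)|=|\G|$. Equivalently, the preimage of $\cX^{=\G}$ in $U^{\G}$ consists of those points that do not lie in $U^{\H'}$ for any subgroup $\H'\subseteq\H$ with $|\H'|>|\G|$.

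Since $\H$ is finite there are only finitely many such $\H'$, and each fixed-point scheme $U^{\H'}$ is closed in $U$ by~\cite[Proposition A.8.10]{CGP15}, as already invoked in the proof of Lemma~\ref{lem: stack of G objects no action properties}. Therefore the preimage of $\cX^{=\G}$ in $U^{\G}$ is the complement of a finite union of closed subsets, hence open. Being $\H$-invariant, this open subscheme descends to an open substack of $[U^{\G}/\H]$, and these open loci glue over the étale cover to an open substack $\cX^{=\G}\subseteq\cX^{\G}$, as desired.

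The argument is quite direct; the only substantive input beyond the paper's own setup is the étale-local quotient presentation of a tame DM stack from~\cite{AOV08}, and there is no real obstacle. The only point requiring a moment of care is the reformulation above of the intrinsic condition $\Aut_{\cX}(x)=\G$ into a cardinality condition on the $\H$-stabilizer of a chart representative, which uses crucially that we have already restricted to $\cX^{\G}$.
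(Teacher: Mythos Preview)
Your proof is correct and follows essentially the same strategy as the paper's: reduce \'etale-locally to a chart $[U/\H]$, describe $\cX^{\G}$ as $[U^{\G}/\H]$, and then remove the closed fixed-point loci where the $\H$-stabilizer is strictly larger than $\G$. The only cosmetic difference is that the paper phrases the complement on each $U^{\varphi}$ in terms of subgroups $K\not\subseteq\varphi(\G)$, whereas you use the equivalent cardinality condition $|\H'|>|\G|$ directly on $U^{\G}$.
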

	\begin{proof}
		This statement is étale local on $\cX$, hence we may assume that $\cX=[U/\H]$ for a finite linearly reductive group $\H$ over $k$ acting on a scheme $U$. Recall that $\cX^{\G}=[U^{\G}/\H]$, with $U^{\G}=\cup_{\varphi}U^{\varphi}$, where we are using the notation of the proof of Lemma~\ref{lem: stack of G objects no action properties}.
		Let, $U^{=\varphi}$ be the complement in $U^{\varphi}$ of the union of the closed loci of fixed points under actions by subgroups of $\H$ that do not lie in the image of $\varphi$. By Lemma~\ref{lem: stack of G objects no action properties}, $U^{=\varphi}$ is open in $U^{\varphi}$. Then, $U^{=\G}:=\cup_{\varphi}U^{=\varphi}$ is open in $U^{\G}$. Since $\cX^{=G}=[U^{=\G}/\H]$, we get that $\cX^{=G}\subset\cX^{\G}$ is an open immersion.
	\end{proof}
	\begin{definition}\label{def: open of X lower G}
		We denote by $\cX_{=\G}$ the fiber product $\cX_{\G}\times_{\cX^{\G}}\cX^{=\G}$, which is then an open substack of $\cX_{\G}$ whose field valued points are pairs $(x,\rho)$ such that $\rho:\G\rightarrow\Aut_{\cX}(x)$ is an isomorphism. We denote by $\Phi_{=\G}:\cX_{=\G}\rightarrow\cX^{=\G}$ the map induced by $\Phi_{\G}$.
		Moreover, if $\psi\in\Aut(\G)$, then $\Phi_{\psi}$ preserves $\cX_{=\G}$, hence we can consider the restriction $\Phi_{=\psi}$ of $\Phi_{\psi}$ to that locus.
	\end{definition}
	\begin{proposition}\label{prop: generic gerbe or isom}
		The morphism
		\[
		\begin{tikzcd}
			\Phi_{=\G}:\cX_{=\G}\arrow[r] & \cX^{=\G}
		\end{tikzcd}
		\]
		is an $\Aut(\G)$-torsor, and the coarse moduli space morphism
		\[
		\begin{tikzcd}
			\pi_{\cX^{=\G}}:\cX^{=\G}\arrow[r] & X^{=\G}
		\end{tikzcd}
		\]
		is étale with fibers $B\G$. The composite
		\[
		\begin{tikzcd}
			\pi_{\cX^{=\G}}\circ\Phi_{=\G}:\cX_{=\G}\arrow[r] & X^{=\G}
		\end{tikzcd}
		\]
		is the composite of a banded $\Zen(\G)$-gerbe and an $\Out(\G)$-torsor.
	\end{proposition}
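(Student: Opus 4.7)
The plan is to establish the three assertions in sequence, using the $\Aut(\G)$-action on $\cX_{=\G}$ from Remark~\ref{rmk: special case Phi H=0 H=G} and a rigidification by the central subgroup $\Zen(\G)$ as the main structural tools. Throughout, I would pass to an étale local presentation $\cX^{\G}=[U^{\G}/\H]$ as in Lemma~\ref{lem: stack of G objects no action properties}(2) whenever needed to reduce stacky statements to questions about finite group actions on schemes.

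For the first assertion, the $\Aut(\G)$-action preserves $\Phi_{=\G}$, so I would verify that the induced action-projection map $\cX_{=\G}\times\Aut(\G)\to\cX_{=\G}\times_{\cX^{=\G}}\cX_{=\G}$ is an equivalence. At a geometric point $x$ of $\cX^{=\G}$ with $\Aut_{\cX}(x)\simeq\G$, the fiber of $\Phi_{=\G}$ has objects the isomorphisms $\rho:\G\xrightarrow{\sim}\G$ with morphisms $(x,\rho_1)\to(x,\rho_2)$ given by elements $\alpha\in\G$ conjugating $\rho_1$ into $\rho_2$; the $\Aut(\G)$-action by precomposition is simply transitive on objects and matches stabilizers on both sides (each canonically $\Zen(\G)$ by Lemma~\ref{lem: Aut comparison PhiG}), which together with the finiteness and unramifiedness from Proposition~\ref{prop: basic facts about X{G}} yields the torsor structure. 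The second assertion then follows: the universal pair $(x,\rho)$ on $\cX_{=\G}$ descends along this torsor to identify the inertia of $\cX^{=\G}$ with the constant group scheme $\G\times\cX^{=\G}$, and a tame Deligne--Mumford stack with constant étale stabilizer is an étale gerbe over its coarse moduli space with geometric fibers $B\G$.

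For the third assertion, Lemma~\ref{lem: Aut comparison PhiG} canonically identifies the stabilizer of $\cX_{=\G}$ with $\Zen(\G)$ via $\rho$, so rigidifying by this central subgroup produces an algebraic space $\cY$ together with a banded $\Zen(\G)$-gerbe $\cX_{=\G}\to\cY$. The $\Aut(\G)$-action descends to $\cY$; for $g\in\G$ the isomorphism $\rho(g):(x,\rho)\to(x,\rho\circ\psi_g)$ from Remark~\ref{rmk: special case Phi H=0 H=G} carries both objects to the same geometric point of $\cY$, so $\Inn(\G)$ fixes every geometric point of $\cY$ and acts trivially by separatedness. This produces a residual $\Out(\G)$-action on $\cY$ over $X^{=\G}$. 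To conclude that $\cY\to X^{=\G}$ is an $\Out(\G)$-torsor, one computes the geometric fiber of $\cX_{=\G}\to X^{=\G}$ to be $[\Aut(\G)/\G]\simeq\Out(\G)\times B\Zen(\G)$, where $\G$ acts on $\Aut(\G)$ by left multiplication through $\Inn(\G)$; the corresponding fiber of $\cY\to X^{=\G}$ is then the set $\Out(\G)$, and a group acting transitively on a set of the same cardinality necessarily acts freely.

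The main technical obstacle is the careful 2-categorical bookkeeping in the third step, particularly the passage from pointwise fixing by $\Inn(\G)$ on $\cY$ to strict triviality as an action; for this I would exploit the local presentation $[U/\H]$ to make the finite group actions explicit on the underlying schemes and reduce the verification to an elementary invariant-theoretic check.
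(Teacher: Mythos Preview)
Your proposal is correct and follows essentially the same route as the paper: verify the $\Aut(\G)$-torsor property by a local/fiberwise identification of the fiber with $\Aut(\G)$, deduce the gerbe structure from constancy of the stabilizers, and then rigidify by the canonically embedded $\Zen(\G)$ to obtain an algebraic space carrying a residual $\Out(\G)$-action that one checks is free and transitive on fibers. The paper handles your ``technical obstacle'' in the same way you suggest, using the isomorphism $\rho(g):(x,\rho)\to(x,\rho\circ\psi_g)$ from Remark~\ref{rmk: special case Phi H=0 H=G} to see that objects differing by an inner automorphism are identified after rigidification, so there is no meaningful divergence between the two arguments.
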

	\begin{proof}
		In Definition~\ref{def: action of AutG} we have constructed an action of $\Aut(\G)$ on $\cX_{\G}$ that leaves $\Phi_{\G}$ invariant, and it clearly preserves $\cX_{=\G}$. To show that $\Phi_{=\G}$ is an $\Aut(\G)$-torsor, we can work étale-locally on $\cX^{=G}$. Consider the base change of $\Phi_{=\G}$ along a map $S\rightarrow\cX^{=\G}$ with $S$ a scheme, thus corresponding to an object $x\in\cX(S)$. A $T$-object of the fiber product is then the data of a map $\phi:T\rightarrow S$, an object $(t,\rho)\in\cX_{=\G}(T)$, and an isomorphism $\alpha:t\rightarrow\phi^*(x)$ in $\cX(T)$. An arrow between two such objects is an arrow in $\cX_{=\G}$ compatible with the isomorphisms as $\alpha$. Therefore, the fiber product is equivalent to the algebraic space whose $T$-objects are locally pairs $(t,\rho)\in\cX_{=\G}(T)$, where $\rho:\G\rightarrow\Aut_{\cX}(x)$ is an isomorphism. After passing to an étale cover $S'\rightarrow S$, there exists an isomorphism $G_{S'}\simeq\uAut_{\cX}(x|_{S'})$, hence the fibered product $\cX_{=\G}\times_{\cX^{=\G}}S'$ is isomorphic to $\Aut(\G)\times S'$, with $\Aut(\G)$ acting on itself by multiplication. This concludes the first part.
		
		The second part follows from the first and third. Alternatively, by~\cite[Corollary 4.4.13]{Alp25} the map $\pi_{\cX^{=\G}}$ is locally on $X^{=\G}$ of the form $[\Spec A/\G]\rightarrow\Spec A^{\G}$, and by assumption $\G$ acts trivially and $\Spec A^{\G}=\Spec A$.
		
		Now, notice that $\Zen(\G)$ defines a 2-group-structure on $\cX_{=G}$, see~\cite[Section 5]{ACV03} for the notion of 2-group-structure. Indeed, by Lemma~\ref{lem: Aut comparison PhiG}, for every object $(x,\rho)$ of $\cX_{=G}$ we have a canonical identification $\Aut_{\cX_{\G}}((x,\rho))\simeq\Zen(G)$ induced by $\rho$ itself, hence it is preserved by morphism in $\cX_{=\G}$, as they are $\G$-equivariant.
		This defines the $Z(\G)$-2-group-structure on $\cX_{=G}$, thus inducing a $Z(\G)$-rigidification $\cX_{=G}\rightarrow\widetilde{\cX}_{=\G}$; see~\cite[Section 5]{ACV03} or~\cite[Appendix A]{AOV08} for the rigidification construction.
		Moreover, by Lemma~\ref{lem: Aut comparison PhiG}, $\widetilde{\cX}_{=\G}$ is equivalent to an algebraic space, and objects differing by an inner automorphism of $\G$ are identified. In particular, there is a residue action of $\Out(\G)=\Aut(\G)/\Inn(\G)$ on $\widetilde{\cX}_{=\G}$, and the induced map $\widetilde{\cX}_{=\G}\rightarrow X^{=\G}$ is $\Out(\G)$-invariant. It follows easily that $\widetilde{\cX}_{=\G}\rightarrow X^{=\G}$ is a $\Out(\G)$-torsor, which concludes the third part.
	\end{proof}
	\begin{corollary}\label{cor: generic gerbe or isom complete case}
		The following holds.
		\begin{enumerate}
			\item Suppose that $\G$ is adjoint, that is, $\Aut(\G)=\Inn(\G)$. Then,
			\[
			\begin{tikzcd}
				\Phi_{=\G}:\cX_{=\G}\arrow[r] & \cX^{=\G}
			\end{tikzcd}
			\]
			is an $\Inn(\G)$-torsor, and the composite
			\[
			\begin{tikzcd}
				\pi_{\cX^{=\G}}\circ\Phi_{=\G}:\cX_{=\G}\arrow[r] & X^{=\G}
			\end{tikzcd}
			\]
			is a banded $\Zen(\G)$-gerbe.
			\item Suppose $\Zen(\G)=0$. Then, $\cX_{=G}$ is an algebraic space and the composite
			\[
			\begin{tikzcd}
				\pi_{\cX^{=\G}}\circ\Phi_{=\G}:\cX_{=\G}\arrow[r] & X^{=\G}
			\end{tikzcd}
			\]
			is an $\Out(\G)$-torsor.
			\item Suppose that $\G$ is complete, that is, adjoint with trivial center. Then,
			\[
			\begin{tikzcd}
				\Phi_{=\G}:\cX_{=\G}\arrow[r] & \cX^{=\G}
			\end{tikzcd}
			\]
			is a trivial $\G$-torsor, and the composite
			\[
			\begin{tikzcd}
				\pi_{\cX^{=\G}}\circ\Phi_{=\G}:\cX_{=\G}\arrow[r] & X^{=\G}
			\end{tikzcd}
			\]
			is an isomorphism. In particular, $\pi_{\cX^{=\G}}:\cX^{=\G}\rightarrow X^{=\G}$ is a trivial gerbe.
		\end{enumerate}
	\end{corollary}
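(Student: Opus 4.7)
The plan is to derive each of the three parts as a direct specialization of Proposition~\ref{prop: generic gerbe or isom}, using Lemma~\ref{lem: Aut comparison PhiG} to compute the stabilizers of $\cX_{=\G}$ and Remark~\ref{rmk: special case Phi H=0 H=G} for the triviality up to 2-isomorphism of the inner action. The three hypotheses correspond respectively to killing $\Out(\G)$, killing $\Zen(\G)$, and killing both simultaneously, so the argument can be organized in that order.

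Part (1) is essentially a reading of Proposition~\ref{prop: generic gerbe or isom}: the hypothesis $\Aut(\G)=\Inn(\G)$ is equivalent to $\Out(\G)=1$, which turns the $\Out(\G)$-torsor appearing in the factorization of $\pi_{\cX^{=\G}}\circ\Phi_{=\G}$ into an isomorphism, so both asserted statements are immediate. For part (2), Lemma~\ref{lem: Aut comparison PhiG} identifies the stabilizer of every $(x,\rho)\in\cX_{=\G}$ with $\Zen(\G,\Aut_{\cX}(x))=\Zen(\G)$, using that $\rho$ is an isomorphism. Hence the assumption $\Zen(\G)=0$ forces all stabilizers of $\cX_{=\G}$ to be trivial, making $\cX_{=\G}$ an algebraic space; simultaneously, the banded $\Zen(\G)=\{1\}$-gerbe in the factorization of Proposition~\ref{prop: generic gerbe or isom} is an isomorphism, so the composite reduces to the $\Out(\G)$-torsor.

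For part (3), the two previous specializations combine: both the banded gerbe and the $\Out(\G)$-torsor in the factorization become isomorphisms, so the composite $\pi_{\cX^{=\G}}\circ\Phi_{=\G}$ is an isomorphism. Composing its inverse with $\Phi_{=\G}$ yields a section $X^{=\G}\to\cX^{=\G}$ of $\pi_{\cX^{=\G}}$, trivializing the $B\G$-gerbe structure provided by Proposition~\ref{prop: generic gerbe or isom}. The main subtle point is to trivialize the $\Aut(\G)\simeq\G$-torsor $\Phi_{=\G}$ itself: here I would invoke Remark~\ref{rmk: special case Phi H=0 H=G}, which provides a 2-isomorphism between the $\Inn(\G)\simeq\G$-action on $\cX_{=\G}$ and the trivial action. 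Since by part (2) $\cX_{=\G}$ is an algebraic space, all 2-morphisms are trivial, so this 2-isomorphism promotes to an equality of morphisms and produces the desired trivialization.
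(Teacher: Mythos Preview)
Your proposal is correct and follows the same approach as the paper, which simply states that the corollary ``follows immediately from Proposition~\ref{prop: generic gerbe or isom} and its proof.'' You have unpacked this into explicit steps, correctly invoking Lemma~\ref{lem: Aut comparison PhiG} for the stabilizer computation in part~(2) and Remark~\ref{rmk: special case Phi H=0 H=G} for the triviality of the $\Inn(\G)$-action in part~(3); the observation that 2-isomorphisms between morphisms into an algebraic space collapse to equalities is exactly what is needed to upgrade the $2$-triviality of the action to genuine triviality.
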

	\begin{proof}
		It follows immediately from Proposition~\ref{prop: generic gerbe or isom} and its proof.
	\end{proof}
	\begin{remark}\label{rmk: gerbe complete groups}
		The previous corollary implies that if $\G$ is a finite complete group then $\pi_{\cX^\G}$ is generically a trivial gerbe. This fits in the more general framework of non-abelian bands, or liens, developed by Giraud in~\cite{Gir65}; see~\cite[Section 3]{EHKV01} for a quicker introduction. If $\G\rightarrow Y$ is an fppf group scheme over a scheme $Y$, a $\G$-gerbe is a morphism $\cG\rightarrow Y$ such that there exists an fppf cover $Y'\rightarrow Y$ such that $\cG\times_YY'\simeq B\G\times_YY'$. We say that $\cG\rightarrow Y$ is trivial if $\cG=B\G$. There are two cohomological obstructions to the triviality of such gerbes, the first of which lives in $\H^1(Y,\Out(\G))$, thus it corresponds to a $\Out(\G)$-torsor $P\rightarrow Y$. If this vanishes, then the second obstruction is a class in $\H^2(Y,\Zen(\G))$; in particular, there is always such a cohomology class in $\H^2(P,\Zen(\G))$, hence a corresponding $\Zen(\G)$-banded gerbe $\cW\rightarrow P$. Then, Proposition~\ref{prop: generic gerbe or isom} yields a geometric realization of these obstructions, given by the composite $\cX_{=\G}\rightarrow X^{=\G}$.
	\end{remark}
	\begin{remark}\label{rem: canonical desingularization}
		When $\cX$ is smooth and $\cX_{=\G}$ is connected with closure $\overline{\cX_{=\G}}$ in $\cX_{\G}$, the morphism $[\overline{\cX_{=\G}}/\Aut(\G)]\rightarrow\cX^{\G}$ is a canonical desingularization of $\cX^{\G}$ in the category of stacks, by Propositions~\ref{prop: basic facts about X{G}} and~\ref{prop: generic gerbe or isom}. Similarly, $[\widetilde{\cX}_{\G}/\Out(\G)]\rightarrow X^{\G}$ is a canonical desingularization of $X^{\G}$ in the category of stacks, where $\overline{\cX_{=\G}}\rightarrow\widetilde{\cX}_{\G}$ is the $\Zen(\G)$-rigidification.
	\end{remark}
	In the next sections we will apply this construction to study monodromy groups of some étale covers, in particular when $\G$ is complete. Moreover, it is sometimes easier to find a presentation as a quotient stack for $\cX_{\G}$ rather than for $\cX^{\G}$, and this can be used to compute the monodromy more directly. We illustrate the above theory with one example that will be useful for Section~\ref{sec: monodromy G cubic surfaces} as well.
	\begin{example}\label{exm: G cubic surfaces}
		For simplicity, let us assume the base field to be $\C$. Recall that $\cM$ denotes the stack parametrizing smooth cubic surfaces, with coarse moduli space $\pi_{\cM}:\cM\rightarrow\M$. Then, for every group $\G$ realizable as the automorphism group of some smooth cubic surface, see~\cite[Table 9.6]{Dol12} and~\cite[Table 5]{BLP23}, we can consider the closed substack $\cM^{\G}$ of $\cM$. In this case, the stack $\cM_{\G}$ parametrizes pairs $(S\rightarrow B,\rho:\G\hookrightarrow\Aut_B(S))$ of a family of smooth cubic surfaces $S\rightarrow B$ and a faithful $\G$-action on it; the map $\Phi_{\G}:\cM_{\G}\rightarrow\cM^{\G}$ forgets the $\G$-action. Let $\cF:\cL\rightarrow\cM$ be the cover given by the 27 lines (see Subsection~\ref{subsec: lines on cubics generic case}), and $\cF^{\G}:\cL^{\G}\rightarrow\cM^{\G}$ the restriction to $\cM^{\G}$. Then, there is an induced cover $\L^{\G}\rightarrow M^{\G}$, which is finite and étale over $\M^{=\G}$. This induces a commutative diagram with cartesian squares
		\begin{equation}\label{diag: covers cubic surfaces}
			\begin{tikzcd}
				\cL_{\G}\arrow[d,"\phi"]\arrow[dd,bend right=40,"\cF_{\G}"']\arrow[r] & \cL^{\G}\arrow[d,"\psi"]\arrow[rd]\\
				\hat{\cL}_{\G}\arrow[d]\arrow[r] & \hat{\cL}^{\G}\arrow[r]\arrow[d] & \L^{\G}\arrow[d,"\F^{\G}"]\\
				\cM_{\G}\arrow[r,"\Phi_{\G}"] & \cM^{\G}\arrow[r,"\pi_{\cM^{\G}}"] & \M^{\G},
			\end{tikzcd}
		\end{equation}
		where we are defining some of the stacks by this diagram.
		Here, a $B$-object of $\cL_{\G}$ is a pair $(L\subset S\xrightarrow{f}B,\rho)$ of a family of smooth cubic surfaces with a line $L$ and a faithful action $\rho:\G\hookrightarrow\Aut(S)$; an arrow is a $\G$-equivariant morphism of families preserving the line. Moreover, $\psi:\cL^{\G}\rightarrow\hat{\cL}^{\G}$ is generically a homeomorphism, while $\hat{\cL}^{\G}$ is generically FPR. Even though the spaces might not be connected, the construction still works and respects the connected components. From now on, we assume $\cM_{=\G}$ to be connected, hence irreducible being smooth; thus, the same holds for $\cM^{=\G}$ and $\M^{=\G}$.
		
		Recall that it can happen that $\cM^{\G}$ and $\M^{\G}$ are non-normal. As $\cF^{\G}:\cL^{\G}\rightarrow\cM^{\G}$ is finite and étale, its monodromy group is defined anyway, but it is not clear what $\Mon_{\F^{\G}}$ should be. To make sense of it, we will consider the restriction $\F^{=\G}$, as it is finite and étale by Lemma~\ref{lem: factorization FPR and gerbe}.
		
		Moreover, the monodromy of $\cF^{\G}$ can change if we restrict to some open substack, for instance to $\cM^{=\G}$, as we will see in Section~\ref{sec: monodromy G cubic surfaces}. For example, the monodromy group of $\cF^{\G}$ with $\G=\mC_2$ is clearly the full $\W(E_6)$, as this is generated by automorphism groups of some special cubics, which are all points in $\cM^{\G}$; see Proposition~\ref{prop: monodromy general case}. Arguably, this group is too big to be what we are looking for, and it looses the connection with the solvability of the problem of finding the lines. On the other hand, working with coarse moduli spaces, the monodromy group of $\F^{=\G}$ does not see the generic stabilizer, which is something we probably want to keep track of. A good compromise seems to be computing $\Mon_{\cF^{=\G}}$, which is the one we are going to study; when possible, we will compare it with the other choices of monodromy groups.
		
		Over $\cM^{=\G}$, $\psi$ is a Galois cover with Galois group $\Mon_{\psi}=\G$, since every automorphism of a smooth cubic surface acts faithfully on the set of lines. From this, Proposition~\ref{prop: monotonous gerbe case}, and the last part of Lemma~\ref{lem: factorization FPR and gerbe} we get an exact sequence
		\begin{equation}\label{eq: exact monodromy G cubic surfaces}
			\begin{tikzcd}
				0\arrow[r] & \G\arrow[r] & \Mon_{\cF^{=\G}}\arrow[r] & \Mon_{\F^{=\G}}\arrow[r] & 0.
			\end{tikzcd}
		\end{equation}
		We would like to compute $\Mon_{\F^{=\G}}$ and understand when~\eqref{eq: exact monodromy G cubic surfaces} splits. To get a splitting as direct sum, we would need to find a $\G$-cover $g:\cW\rightarrow\cM^{=\G}$ such that the monodromy group of the pullback $h:\cL^{=\G}\times_{\cM^{=\G}}\cW\rightarrow\cW$ is isomorphic to $\Mon_{\F^{=\G}}$ under $\Mon_h\hookrightarrow\Mon_{\cF^{=\G}}\twoheadrightarrow\Mon_{\F^{=\G}}$. The natural candidate is $\cW=\cM_{=\G}$.
		
		Now, over $\cM_{=\G}$ the cover $\phi$ is Galois with Galois group $Z(\G)$, even though it is not a homeomorphism. This follows essentially by Lemma~\ref{lem: Aut comparison PhiG} and the discussion above.
		Since $\hat{\cL}_{\G}\rightarrow\cM_{\G}$ is FPR, if $\Out(\G)=0$ then its monodromy group is isomorphic to $\Mon_{\F^{=\G}}$ via the natural inclusion, thanks to Corollary~\ref{cor: generic gerbe or isom complete case}. This shows that, if $\Out(\G)=0$, the exact sequence
		\[
		\begin{tikzcd}
			0\arrow[r] & \G/Z(\G)\arrow[r] & (\Mon_{\cF^{=\G}})/Z(\G)\arrow[r] & \Mon_{\F^{=\G}}\arrow[r] & 0
		\end{tikzcd}
		\]
		splits as a direct sum. In particular, if $\G$ is complete, then the exact sequence~\eqref{eq: exact monodromy G cubic surfaces} splits, hence we get
		\begin{equation}\label{eq: split monodromy G complete}
			\Mon_{\cF^{=\G}}\simeq\G\times\Mon_{\cF_{=\G}}.
		\end{equation}
		This is true more generally whenever $\pi_{\cM^{=\G}}$ is a trivial gerbe. In general, the sequence~\eqref{eq: exact monodromy G cubic surfaces} does not split, not even as a semidirect product, as we will see for the case $\G=\mC_2$ in Subsection~\ref{subsec: mu2 cubic surfaces}.
	\end{example}
	The above argument generalizes to give the following natural constraint on monodromy groups of symmetric problems.
	\begin{corollary}\label{cor: inclusion in normalizer}
		Assume $\cX$ and $\cX^{=\G}$ to be connected, and let $\cF:\cY\rightarrow\cX$ be a representable, finite, étale cover, with restriction $\cF^{=\G}:\cY^{=\G}\rightarrow\cX^{=\G}$. Let $\F:Y\rightarrow X$ and $\F^{=\G}:Y^{=\G}\rightarrow X^{=\G}$ be the induced covers between moduli spaces. Suppose that for some geometric point $x$ of $\cX^{=\G}$ the composite
		\[
		\begin{tikzcd}
			\G\simeq\pi_1^h(\cX,x)\arrow[r,"\omega_x"] & \pi_1(\cX,x)\arrow[r,twoheadrightarrow] & \Mon_{\cF}
		\end{tikzcd}
		\]
		is injective, that is, $\G$ acts faithfully on the fibers of $\cF$. Then, there is a short exact sequence
		\begin{equation}\label{eq: exact sequence monodromy X=G}
			\begin{tikzcd}
				0\arrow[r] & \G\arrow[r,"\omega_x"] & \Mon_{\cF^{=\G}}\arrow[r] & \Mon_{\F^{=\G}}\arrow[r] & 0.
			\end{tikzcd}
		\end{equation}
		In particular, $\Mon_{\cF^{=\G}}$ is contained in the normalizer $\Nor(\G,\Mon_{\cF})$ of $\G$ in $\Mon_{\cF}$, and $\Mon_{\F^{=\G}}\subseteq\Nor(\G,\Mon_{\cF})/\G$. If moreover $\cX_{=\G}$ is connected, then the monodromy of the induced cover $\cF_{=\G}:\cY_{=\G}\rightarrow\cX_{=\G}$ is contained in the centralizer $\Zen(\G,\Mon_{\cF})$. Finally, if $\G$ is complete then the exact sequence~\eqref{eq: exact sequence monodromy X=G} splits as a direct sum.
	\end{corollary}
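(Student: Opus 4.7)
The plan is to apply the factorization of Lemma~\ref{lem: factorization FPR and gerbe} to the restricted cover $\cF^{=\G}:\cY^{=\G}\to\cX^{=\G}$, using the gerbe structure of $\pi_{\cX^{=\G}}$ from Proposition~\ref{prop: generic gerbe or isom}. Since $\pi_{\cX^{=\G}}$ is étale with fibers $B\G$, by Remark~\ref{rmk: generically always monotonous gerbe} I may shrink $\cX^{=\G}$ to a dense open substack over which it is a banded $\G$-gerbe; this preserves all monodromy groups by Corollary~\ref{cor: independence open substack}. Lemma~\ref{lem: factorization FPR and gerbe} then yields the exact sequence $\G\xrightarrow{\omega_x}\Mon_{\cF^{=\G}}\to\Mon_{\F^{=\G}}\to 0$, after identifying the monodromy of the FPR factor with $\Mon_{\F^{=\G}}$ via Proposition~\ref{prop: correspondence FPR}.

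The main obstacle is exactness on the left. I would exploit the natural identification $F_x(\cY^{=\G})=F_x(\cY)$ coming from the cartesian square $\cY^{=\G}=\cY\times_\cX\cX^{=\G}$: under this identification, the $\G$-actions on fibers induced by $\omega_x^{\cX^{=\G}}$ and $\omega_x^{\cX}$ coincide, so the faithfulness hypothesis on $\cF$ forces $\omega_x^{\cX^{=\G}}$ to be injective into $\Mon_{\cF^{=\G}}$. The same fiber identification produces a natural injection $\Mon_{\cF^{=\G}}\hookrightarrow\Mon_{\cF}$, since the kernel of $\pi_1(\cX^{=\G},x)\to\Mon_{\cF^{=\G}}$ is precisely the preimage of the kernel of $\pi_1(\cX,x)\to\Mon_{\cF}$. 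Combined with the normality of $\G$ in $\Mon_{\cF^{=\G}}$ granted by the exact sequence, this gives $\Mon_{\cF^{=\G}}\subseteq\Nor(\G,\Mon_{\cF})$ and, passing to the quotient, $\Mon_{\F^{=\G}}\subseteq\Nor(\G,\Mon_{\cF})/\G$.

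For the centralizer statement, assuming $\cX_{=\G}$ is connected, I would observe that $\cF_{=\G}:\cY_{=\G}\to\cX_{=\G}$ carries a canonical $\G$-action over $\cX_{=\G}$ coming from the faithful representation $\rho:\G\hookrightarrow\uAut_{\cX}(x)$ built into the objects of $\cX_{=\G}$. Any monodromy loop must be equivariant for this canonical action, so the image of $\Mon_{\cF_{=\G}}$ inside $\Mon_{\cF}$ commutes with $\G$, placing it in $\Zen(\G,\Mon_{\cF})$.

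Finally, when $\G$ is complete, Corollary~\ref{cor: generic gerbe or isom complete case}(3) gives that $\Phi_{=\G}$ is a trivial $\G$-torsor and $\pi_{\cX^{=\G}}\circ\Phi_{=\G}$ is an isomorphism, so in particular $\cX_{=\G}$ is connected and identifies with $X^{=\G}$. Following the argument of Example~\ref{exm: G cubic surfaces}, the induced map $\Mon_{\cF_{=\G}}\to\Mon_{\F^{=\G}}$ is then an isomorphism, providing a section of $\Mon_{\cF^{=\G}}\to\Mon_{\F^{=\G}}$ whose image centralizes $\G$ by the previous paragraph; hence the extension splits as the direct product $\Mon_{\cF^{=\G}}\simeq\G\times\Mon_{\F^{=\G}}$.
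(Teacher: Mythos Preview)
Your proposal is correct and follows essentially the same route as the paper: both derive the exact sequence from Lemma~\ref{lem: factorization FPR and gerbe} together with the gerbe structure of Proposition~\ref{prop: generic gerbe or isom}, deduce the normalizer containment from normality of $\G$ in $\Mon_{\cF^{=\G}}$, obtain the centralizer statement from equivariance of monodromy with respect to the built-in $\rho$, and split the sequence in the complete case via Corollary~\ref{cor: generic gerbe or isom complete case}. Your write-up is in fact slightly more careful than the paper's in making explicit the fiber identification $F_x(\cY^{=\G})=F_x(\cY)$ that underlies both the injectivity of $\omega_x$ and the inclusion $\Mon_{\cF^{=\G}}\hookrightarrow\Mon_{\cF}$; the only cosmetic difference is that for the direct-product splitting the paper argues via normality of the section's image (using that $\Phi_{=\G}$ is a $\G$-torsor) rather than via the centralizer, but the two arguments are interchangeable.
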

	\begin{proof}
		By Proposition~\ref{prop: generic gerbe or isom} and the end of Lemma~\ref{lem: factorization FPR and gerbe}, we have the exact sequence of the statement, where the first morphism is injective by hypothesis. The existence of the exact sequence implies that $\G$ is normal in $\Mon_{\cF^{=\G}}$, hence the first two containment. If $\G$ is complete, Corollary~\ref{cor: generic gerbe or isom complete case} implies that $\cX_{=\G}\rightarrow X^{=\G}$ is an isomorphism, yielding a section to $\pi_{=\cX}$ and an isomorphism $\Mon_{\F^{=\G}}\simeq\Mon_{\cF_{=\G}}$. Consequently, the inclusion $\Mon_{\cF_{=\G}}\hookrightarrow\Mon_{\F^{=\G}}$ induces a splitting of~\eqref{eq: exact sequence monodromy X=G}. As $\Phi_{=\G}:\cX_{=\G}\rightarrow\cX^{=\G}$ is a $\G$-torsor, the image of the splitting is normal with quotient $\G$, showing that $\Mon_{\cF^{=\G}}\simeq\G\times\Mon_{\F^{=\G}}$.
		
		The inclusion $\Mon_{\cF_{=\G}}\subseteq\Zen(\G,\Mon_{\cF})$ follows from requiring that the monodromy action over a point $(x,\rho)$ preserves the faithful action $\rho$. Alternatively, recall that by Proposition~\ref{prop: generic gerbe or isom} the morphism $\Phi_{=\G}:\cX_{=\G}\rightarrow\cX^{=\G}$ is a $\Aut(\G)$-torsor. This induces an exact sequence
		\[
		\begin{tikzcd}
			0\arrow[r] & \Mon_{\cF_{=\G}}\arrow[r] & \Mon_{\cF^{=\G}}\arrow[r] & \Aut(\G)
		\end{tikzcd}
		\]
		where the last morphism is the composite
		\[
		\begin{tikzcd}
			\Mon_{\cF^{=\G}}\arrow[r,hookrightarrow] & \Nor(\G,\Mon_{\cF^{=\G}})\arrow[r] & \Aut(\G);
		\end{tikzcd}
		\]
		the first inclusion was proved above, and the second morphism sends an element $h\in\Nor(\G,\Mon_{\cF^{=\G}})$ to the automorphism $g\mapsto hgh^{-1}$ of $\G$. As the kernel of the last map is the centralizer $\Zen(\G,\Mon_{\cF})$, this concludes.
	\end{proof}
	\begin{remark}\label{rmk: compatible isom complete}
		The fact that $\Mon_{\cF_{=G}}\simeq\Mon_{\F^{=\G}}$ whenever $\G$ is complete is compatible with the property of complete groups whereby $\Zen(\G,\Mon_{\cF})\simeq\Nor(\G,\Mon_{\cF})$ under the natural projection.
	\end{remark}
	An important question is whether the inclusions in Corollary~\ref{cor: inclusion in normalizer} can be reversed. This is not always the case, even in `nice' situations, as shown by the next example. In Section~\ref{sec: monodromy G cubic surfaces} we present examples where the inclusions of Corollary~\ref{cor: inclusion in normalizer} are equalities.
	\begin{example}\label{exm: counterxample equality conjecture}
		Consider the action of the symmetric group $\fS_3$ on $\P_{\C}^1$ induced by an injection $\fS_3\hookrightarrow\PGL_2$ with image the subgroup generated by the matrices
		\[
		\sigma=\begin{bmatrix}
			0 & 1\\
			1 & 0
		\end{bmatrix},\qquad\rho=\begin{bmatrix}
			1 & 0\\
			0 & \zeta_3
		\end{bmatrix},
		\]
		where $\zeta_3\not=1$ is a third root of unity. This yields a Galois cover
		\[
		\begin{tikzcd}
			\cY:=\P^1\arrow[r,"f"] & {[\P^1/\fS_3]}=:\cX.
		\end{tikzcd}
		\]
		The fixed points of $\mC_3=\langle\rho\rangle$ are $[1:0]$ and $[0:1]$, which are swapped by $\sigma$. It follows that $\cX^{=\G}\simeq B\mC_3$ and it is connected, hence $\Mon_{f^{=\G}}\simeq\mC_3\subsetneq\fS_3=\Nor(\mC_3,\fS_3)$. This gives an example where the cover is Galois between smooth, proper stacks with schematic source, but the containment of Corollary~\ref{cor: inclusion in normalizer} are strict. In constrast, $\cX_{=\mC_2}=\cX^{=\mC2}\simeq B\mC_2\sqcup B\mC_2$, which is disconnected, and over each connected component the cover has monodromy group equal to $\Zen(\mC_2,\Mon_f)=\Nor(\mC_2,\Mon_f)$.
	\end{example}
	
	As we will see shortly, the stack $\cX_{\G}$ is usually not connected nor equidimensional, even when $X^{\G}$ is. In Section~\ref{subsec: S3xmu2 cubic surfaces} we give an example where also $\cX_{=\G}$ is disconnected.
	In the following subsection we study the connected components in the case of smooth cubic surfaces.
	
	\subsection{Connected Components in the Case of Cubic Surfaces}\label{subsec: G cubic surfaces with fixed representation}
	Let $S$ be a smooth cubic surface over $\C$, and set $\V:=\H^0(\omega_{S}^{\vee})$. Recall that $S$ is anti-canonically embedded in $\P\V\simeq\P^3$, where it is defined as the zero locus of a degree 3 polynomial $f\in\H^0(\cO_{\P\V}(3))$. In particular, any automorphism $\psi$ of $S$ extends to an automorphism $\overline{\psi}$ of $\P^3$, which is induced by the automorphism $\widetilde{\psi}:\H^0(\omega_{S}^{\vee})\rightarrow\H^0(\omega_{S}^{\vee})\simeq\C^{\oplus4}$. Thus, given an injection $\rho:\G\hookrightarrow\Aut(S)$, we get a natural four dimensional $\G$-representation $\V$, such that the zero locus $S$ of $f\in\H^0(\cO_{\P\V}(3))$ is preserved. This means that there exists a character $\chi$ of $\G$ such that $g\cdot f=\chi(g)\cdot f$ for every $g\in\G$; notice that this character only depends on the $\G$-action. Then, we can define an immersion $S\rightarrow\P\V\xrightarrow{\simeq}\P(\V\otimes\chi^{-1})$, so that $g\cdot f=f$. Notice that all the immersion are $\G$-equivariant as we are working with projective spaces, so the characters play a role only when talking about $\cO_{\P\V}(1)$ and its tensor powers. All of this works for families of $\G$-cubic surfaces.
	\begin{definition}\label{def: G cubic surfaces with fixed representation}
		Let $\V$ be a 4-dimensional representation of $\G$. We denote by $\cM_{\G}^{\V}$ be the full subcategory of $\cM_{\G}$ parametrizing objects $(f:S\rightarrow B,\rho)$ of $\cM_{\G}$ whose induced $\G$-representation $\H^0(\omega_{S_b}^{\vee})$ is isomorphic to $\V$ for every $b\in B(\C)$.
	\end{definition}
	\begin{lemma}\label{lem: cubic surface representation is locally constant}
		The category $\cM_{\G}^{\V}$ is an open and closed substack of $\cM_{\G}$.
	\end{lemma}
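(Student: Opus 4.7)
The plan is to realize $\cM_{\G}^{\V}$ as the locus where a locally constant function on $\cM_{\G}$ attains a prescribed value, which makes it automatically open and closed. Given a family $(f:S\rightarrow B,\rho)\in\cM_{\G}(B)$, I consider the sheaf $\cE:=f_{*}\omega_{S/B}^{\vee}$. Cohomology and base change applied to the proper flat family of smooth cubic surfaces $f$ (using $\H^{i}(S_b,\omega_{S_b}^{\vee})=0$ for $i>0$) shows that $\cE$ is a locally free $\cO_{B}$-module of rank $4$ whose formation commutes with arbitrary base change. The $\G$-action on $S/B$ makes $\cE$ into a $\G$-equivariant locally free sheaf, whose fiber at $b\in B(\C)$ is the $\G$-representation $\H^{0}(S_b,\omega_{S_b}^{\vee})$ appearing in Definition~\ref{def: G cubic surfaces with fixed representation}.

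The heart of the argument is to show that the isomorphism class of $\cE_b$ as a $\G$-representation is locally constant in $b$. For each $g\in\G$, the trace $t_g:=\mathrm{tr}(g;\cE)\in\Gamma(B,\cO_B)$ is a regular function whose value at a geometric point $b$ equals the character value $\chi_b(g)$. Since $g$ has finite order $n$, its action on each fiber is diagonalizable with eigenvalues among the $n$-th roots of unity, so $\chi_b(g)$ lies in the finite set of sums of four such roots. Because $\cM_{\G}$ is smooth by Proposition~\ref{prop: basic facts about X{G}}, I may verify the claim on a smooth, hence reduced, atlas; then a regular function on a reduced, finite-type $\C$-scheme whose image on closed points is finite must be locally constant, since the preimages of the distinct values form a finite partition of the base into disjoint closed, hence clopen, subsets. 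Consequently each character $\chi_b$ is locally constant in $b$, and since characters determine $\C$-representations of a finite group, so is the isomorphism class of $\cE_b$.

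It follows that $\cM_{\G}^{\V}$ is cut out by the condition that a locally constant function take a prescribed value, hence it is a union of connected components of $\cM_{\G}$, giving the open-and-closed conclusion. The only mildly delicate step is the locally-constant-from-finitely-valued principle for the trace functions; this is a standard reduction to a reduced, connected base, and I do not foresee any genuine obstacle beyond careful bookkeeping of the $\G$-equivariant structure on the pushforward.
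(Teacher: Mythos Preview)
Your proof is correct and follows essentially the same line as the paper's: both identify the pushforward $f_*\omega_{S/B}^{\vee}$ as a rank-$4$ $\G$-equivariant vector bundle commuting with base change, and then argue that the fiberwise representation is locally constant on the base. The paper dispatches this last step in one line by invoking that $\G$ is linearly reductive, whereas you spell it out via the trace functions taking finitely many values on a reduced base; these are standard and equivalent justifications.
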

	\begin{proof}
		We know that for every flat family of smooth cubic surfaces $f:S\rightarrow B$, the pushforward $E:=f_*\omega_{S/B}^{\vee}$ is a rank 4 vector bundle satisfying base change, with $\G$ acting on the fibers over $B$. One way to see this is to notice that $\cM$ is smooth and $\dim\H^0(\omega_{S_0})=4$ for every smooth cubic surface $S_0$, hence one can apply Grauert's Theorem to the universal family $\cS\rightarrow\cM$. Now, assume $B$ to be connected; then, we want to show that for every geometric point $b$ of $B$ the $G$-representation $E_b$ does not depend on $b$. This follows immediately from the fact that $\G$ is linearly reductive.
	\end{proof}
	We will be interested in considering only projectivization of representations, as we are interested in the induced action of $\G$ on $\P\V=\P(\H^0(S,\omega_{S}^{\vee}))\simeq\P^3$ rather than the action on $\V=\H^0(S,\omega_{S}^{\vee})$. Two actions on $\V$ inducing the same one on $\P\V$ differ by a character. Moreover, we will see that in general not every action on $\V$ is possible. This justifies the introduction of the following substack of $\cM_{\G}$.
	\begin{definition}
		Let $\V$ be a 4-dimensional representation of $\G$, with associated projective representation $\P\V$. We denote by $\cM_{\G}^{\P\V}$ be the full subcategory of $\cM_{\G}$ parametrizing objects $(f:S\rightarrow B,\rho)$ of $\cM_{\G}$ whose induced projective $\G$-representation $\P(\H^0(\omega_{S_b}^{\vee}))$ is isomorphic to $\P\V$ for every $b\in B(\C)$.
	\end{definition}
	The following is an analogue and direct consequence of Lemma~\ref{lem: cubic surface representation is locally constant}.
	\begin{lemma}\label{lem: cubic surface projective representation is locally constant}
		The category $\cM_{\G}^{\P\V}$ is an open and closed substack of $\cM_{\G}$.
	\end{lemma}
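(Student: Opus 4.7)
The plan is to reduce directly to Lemma~\ref{lem: cubic surface representation is locally constant} by decomposing $\cM_{\G}^{\P\V}$ into a finite disjoint union of substacks of the form $\cM_{\G}^{\V'}$.

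First, I would establish the following elementary fact from representation theory: two finite-dimensional linear representations $\V_1$ and $\V_2$ of the finite group $\G$ have isomorphic projectivizations $\P\V_1 \simeq \P\V_2$ if and only if there exists a character $\chi:\G\rightarrow\Gm$ such that $\V_2 \simeq \V_1\otimes\chi$ as $\G$-representations. One direction is obvious, since twisting by a character leaves the projectivization unchanged. Conversely, an isomorphism $\P\V_1 \simeq \P\V_2$ lifts (non-canonically) to a linear isomorphism $\V_1\rightarrow\V_2$, and its failure to be $\G$-equivariant is measured by a 1-cocycle $\G\rightarrow\Gm$, i.e.\ a character $\chi$, realizing $\V_2\simeq\V_1\otimes\chi$.

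Next, I would use this to write
\[
\cM_{\G}^{\P\V} \;=\; \bigcup_{\chi\in\widehat{\G}} \cM_{\G}^{\V\otimes\chi},
\]
where $\widehat{\G}$ denotes the group of characters of $\G$. Since $\G$ is finite, $\widehat{\G}$ is finite, so this is a \emph{finite} union. By Lemma~\ref{lem: cubic surface representation is locally constant}, each substack $\cM_{\G}^{\V\otimes\chi}$ is open and closed in $\cM_{\G}$. A finite union of open and closed substacks is again open and closed, which gives the claim.

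The only genuinely non-routine step is the first one, i.e.\ the identification of projective representations with linear representations up to a character twist; but this is a classical fact and its proof is straightforward in the setting of finite groups (no cohomological obstruction in $\H^2(\G,\Gm)$ appears here because we are only asking about the fiber over a fixed projective representation, not about lifting an abstract projective representation to a linear one). Everything else is a direct invocation of the previous lemma. I do not anticipate any substantive obstacle.
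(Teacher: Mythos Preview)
Your proposal is correct and matches the paper's approach: the paper states the lemma as ``a direct consequence of Lemma~\ref{lem: cubic surface representation is locally constant}'' without further details, and what you have written is precisely the natural way to make that reduction explicit. Your identification of linear representations with isomorphic projectivizations as character twists of one another, and the resulting finite decomposition $\cM_{\G}^{\P\V}=\bigcup_{\chi\in\widehat{\G}}\cM_{\G}^{\V\otimes\chi}$, is exactly the intended argument.
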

	\section{A Concrete Example: Monodromy of $\G$-Cubic Surfaces}\label{sec: monodromy G cubic surfaces}
	In this section we compute the monodromy group of $\cF^{=\G}:\cL^{=\G}\rightarrow\cM^{=\G}$ and $\F^{=\G}:\L^{=\G}\rightarrow\M^{=\G}$ for $\G$ equal to $\fS_4$, $\fS_3$, $\fS_3\times\mC_2$, and $\mC_2$. Here, $\fS_n$ is the $n$-th symmetric group, while $\mC_n$ is the cyclic group of order $n$. Each of these examples illustrates a different technique, all carried out within the framework of stacks.
	\begin{remark}\label{rmk: irreducibility of MG}
		For every group $\G$ that we consider, the moduli stack $\cM^{\G}$ and its moduli space $\M^{\G}$ are irreducible, thanks to the results in~\cite{Tu01}. In particular, also the opens $\cM^{=\G}$ and $\M^{=\G}$ are irreducible, and the monodromy groups of $\cF^{=\G}$ and $\F^{=\G}$ are well defined. See also Example~\ref{exm: G cubic surfaces}.
	\end{remark}
	\subsection{Automorphism Groups and Eckardt Points}
	As it is probably already clear from what we have done so far, it is important to understand the automorphism group of cubic surfaces and how it relates to the geometry of these surfaces. It is a classical fact that the automorphism group of cubic surfaces is closely related to the number and configurations of Eckardt points; in this subsection we briefly recall the main results on this matter. See~\cite[Chapter 9]{Dol12} for more details.
	\begin{definition}\label{def: Eckardt points}
		Let $S$ be a smooth cubic surface. A point of $S$ is called an Eckardt point if it is equal to the intersection of three different lines contained in $S$.
	\end{definition}
	It is a classical fact that a smooth cubic surface over $\C$ has at most 18 Eckardt points. This bound is realized by the Fermat cubic, defined as the zero locus of $x^3+y^3+z^3+w^3$ in $\P^3$; it is the only isomorphism class of cubic surfaces with 18 Eckardt points.
	\begin{lemma}\label{lem: facts on Eckardt points}
		Let $e$ be an Eckardt point of a smooth cubic surface $S$ anti-canonically embedded in $\P^3$.
		\begin{enumerate}
			\item There exists a unique involution of $\P^3$ preserving $S$ and with fixed point locus equal to the disjoint union of an hyperplane and the point $e$. Conversely, given an involution of $\P^3$ preserving $S$ and with fixed point locus equal to the disjoint union of a hyperplane and a point $p\in S$, we have that $p$ is an Eckardt point of $S$.
			\item Suppose that $e'$ is another Eckardt point, and let $E$ be the line passing through $e$ and $e'$. Then, either $M\subset S$ or $M\cap S=\{e,e',e''\}$ with $e''$ a different Eckardt point, and the two possibilities exclude each other.
			\item In the first case of the previous point, the involutions defined by $e$ and $e'$ commute, and fix both $e$ and $e'$. In the second case, the involutions defined by $e$, $e'$ and $e''$ generate a subgroup of $\Aut(S)$ isomorphic to $\fS_3$.
		\end{enumerate}
	\end{lemma}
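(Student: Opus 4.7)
The plan is to normalize the defining cubic in local coordinates on $\P^3$ adapted to the Eckardt point and then use this normalization to produce, characterize, and compare the associated involutions. For (1), I fix coordinates with $e=[1:0:0:0]$ and $T_eS=\{x_3=0\}$, so $f=x_0^2x_3+x_0Q(x_1,x_2,x_3)+C(x_1,x_2,x_3)$ for some quadratic $Q$ and cubic $C$. The Eckardt condition at $e$ translates to $C(x_1,x_2,0)$ factoring as a product $\ell_1\ell_2\ell_3$ of three distinct linear forms; this forces $x_3\mid Q$. The substitution $x_0\mapsto x_0-\tfrac{1}{2}(Q/x_3)$ then puts $f$ in the form $x_0^2x_3+C'(x_1,x_2,x_3)$, and $\sigma\colon x_0\mapsto -x_0$ is an involution of $\P^3$ preserving $S$ with fixed locus $\{x_0=0\}\sqcup\{e\}$. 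For uniqueness, any involution of this type has the form $v\mapsto v-2\ell(v)e$ for some linear form $\ell$ with $\ell(e)=1$; computing $\sigma^*f$ and matching the resulting $x_0x_3$ cross-term against $\pm f$ forces $\ell=x_0$. For the converse, $\sigma$-invariance forces $f=x_0^2L+C$; smoothness at $p$ identifies $L$ with the tangent-plane equation (up to scalar), and any repeated linear factor of $C(x_1,x_2,0)$ would give a line in $S$ along which $T_pS$ is tangent, producing a singular point of $S$ from a direct partials computation and contradicting smoothness.

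For (2), the first observation is that $\sigma_e$ acts as the identity on the projectivized tangent space at $e$ (its unique $(-1)$-eigenvector is $e$), hence preserves every line through $e$; in particular $\sigma_e(M)=M$. Bézout gives $M\cap S$ equal to either $M$ itself or a length-$3$ scheme, and in the latter case $\sigma_e(e')\in M\cap S$ is Eckardt as the image of an Eckardt point under a projective automorphism preserving $S$. Setting $e'':=\sigma_e(e')$, I rule out $e''\in\{e,e'\}$: if $e''=e$ then $e'=\sigma_e(e)=e$; if $e''=e'$ then $M$ is tangent to $S$ at $e'$, so $M\subset T_{e'}S$ and $M\cap S\subset T_{e'}S\cap S$ is contained in the union of the three Eckardt lines through $e'$—forcing either $M$ to coincide with one of them (so $M\subset S$, the excluded case) or $M\cap S=\{e'\}$ (contradicting $e\in M\cap S$).

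For (3), the uniqueness in (1) implies that conjugation by any projective automorphism $\tau$ preserving $S$ sends $\sigma_q$ to $\sigma_{\tau(q)}$; hence the commutativity relation $\sigma_e\sigma_{e'}=\sigma_{e'}\sigma_e$ is equivalent to $\sigma_e(e')=e'$. In the case $M\subset S$, the line $M$ must be one of the three Eckardt lines through $e$, and $\sigma_e|_M$ is an involution of $\P^1$ fixing $e$ and exactly one other point $q=M\cap H_e$; thus $\sigma_e(e')=e'$ iff $e'=q$. I would derive this from the classical bound that a line on a smooth cubic surface contains at most two Eckardt points (cf.\ \cite[Chapter~9]{Dol12}): since $\sigma_e$ permutes the Eckardt points on $M$ and fixes $e$, it must also fix the unique other Eckardt point $e'$. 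In the case $M\cap S=\{e,e',e''\}$, the symmetric argument shows $\sigma_e$ swaps $e',e''$ and $\sigma_{e'}$ swaps $e,e''$, so $\sigma_e\sigma_{e'}$ acts as a $3$-cycle on $\{e,e',e''\}$; since any element of $\PGL_4$ fixing three distinct points of the line $M$ acts trivially on $M$, the homomorphism $\langle\sigma_e,\sigma_{e'}\rangle\to\fS_3$ given by the action on $\{e,e',e''\}$ is injective, yielding an embedded $\fS_3\subset\Aut(S)$. The main obstacle in the whole argument is the at-most-two-Eckardt-points-per-line fact invoked in the $M\subset S$ case, which I would either verify by direct coordinate computation along $M$ or invoke as a classical result.
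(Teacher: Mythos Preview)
The paper's own proof is merely the citation ``See~\cite[Lemmas 9.1.13, 9.1.14, 9.1.15]{Dol12}'', so you have supplied a direct argument along the classical lines that Dolgachev follows. Most of it is correct, but two steps need attention.

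In part (2), the claim ``if $e''=e'$ then $M$ is tangent to $S$ at $e'$'' is not yet justified: a priori the length-$3$ scheme $M\cap S$, supported on $\{e,e'\}$, could have multiplicity $2$ at $e$. You should first note that if $M$ were tangent to $S$ at $e$ then $M\subset T_eS$, so $e'\in T_eS\cap S$ would lie on one of the three Eckardt lines through $e$, forcing $M$ to coincide with that line and hence $M\subset S$. Once tangency at $e$ is excluded, your argument goes through.

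The genuine gap is in part (3), second case. Your stated reason for injectivity of $\langle\sigma_e,\sigma_{e'}\rangle\to\fS_3$ is that any element fixing $e,e',e''$ acts trivially on $M$; but acting trivially on a line does not force an element of $\PGL_4$ to be the identity, so this does not yet give injectivity. The fix is already implicit in your conjugation formula $\tau\sigma_q\tau^{-1}=\sigma_{\tau(q)}$: since $\sigma_e(e')=e''$, $\sigma_e(e'')=e'$, and $\sigma_{e'}(e)=e''$, one computes directly
\[
(\sigma_e\sigma_{e'})^3=\sigma_e(\sigma_{e'}\sigma_e\sigma_{e'})\sigma_e\sigma_{e'}=\sigma_e\sigma_{e''}\sigma_e\cdot\sigma_{e'}=\sigma_{\sigma_e(e'')}\sigma_{e'}=\sigma_{e'}\sigma_{e'}=1.
\]
Thus $\langle\sigma_e,\sigma_{e'}\rangle$ is generated by two involutions whose product has order exactly $3$, hence is $\fS_3$. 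Alternatively, one can observe that both $\sigma_e$ and $\sigma_{e'}$ fix the $2$-plane $H_e\cap H_{e'}$ pointwise, so the group embeds in $\GL(V_M)$ where $V_M$ is the cone over $M$, and there the product of the two reflections has trace $-1$ and determinant $1$, hence order $3$.
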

	\begin{proof}
		See~\cite[Lemmas 9.1.13, 9.1.14, 9.1.15]{Dol12}.
	\end{proof}
	In~\cite[Section 9.5.3]{Dol12} and~\cite[Table 5]{BLP23} the authors have classified the automorphism groups of smooth cubic surfaces, relating them to the number and configuration of the Eckardt points. For example, one can check that a cubic surface admits non-trivial automorphisms if and only if it contains an Eckardt point.
	
	It is possible to define and study the moduli stacks of cubic surfaces with marked Eckardt points, and relate them to the stacks $\cM_{\G}$. An analysis of those stacks is beyond the scope of this work, hence omitted.
	\subsection{$\fS_4$-Cubic Surfaces}\label{subsec: S4 cubic surfaces}
	We use the notation of Example~\ref{exm: G cubic surfaces}, with $\G=\fS_4$.
	
	In this section we compute the monodromy group $\Mon_{\F}$ of the étale cover $\cF^{=\fS_4}:\cL^{=\fS_4}\rightarrow\cM^{=\fS_4}$. By Remark~\ref{rmk: irreducibility of MG}, we know that $\cM^{=\fS_4}$ is connected. As $\fS_4$ is complete, Corollary~\ref{cor: inclusion in normalizer} shows that $\Mon_{\cF^{=\fS_4}}=\fS_4\times\Mon_{\F^{=\fS_4}}$. Therefore, it is enough to compute $\Mon_{\F^{=\fS_4}}$, which is also of independent interest. In fact, $\Mon_{\F^{=S_4}}$ has already been computed by Brazelton and Raman in~\cite{BR24}, showing that it is isomorphic to $\mC_2\times\mC_2$, using interesting methods from Hodge theory; our argument is different from theirs.
	
	Since $\fS_4$ is a complete group, Corollary~\ref{cor: generic gerbe or isom complete case} shows that over the open subscheme $\M^{=\fS_4}$ the composite
	\[
	\begin{tikzcd}
		\cM_{=\fS_4}\arrow[r,"\Phi_{\fS_4}"'] & \cM^{=\fS_4}\arrow[r,"\pi_{\cM^{\fS_4}}"'] & \M^{=\fS_4}
	\end{tikzcd}
	\]
	is an isomorphism; in particular, $\cM_{=\fS_4}$ is also connected. Therefore, it is enough to compute the monodromy group of $\cF_{\fS_4}:\cL_{\fS_4}\rightarrow\cM_{\fS_4}$ after restricting to the irreducible component of $\cM_{\fS_4}$ containing $\cM_{=\fS_4}$.
	Moreover, by Corollary~\ref{cor: inclusion in normalizer}, we know that
	\begin{align*}
		\fS_4\subset\Mon_{\cF^{=\fS_4}}\subseteq \Nor(\fS_4,W(E_6))\simeq\fS_4\times\mC_2\times\mC_2,\\
		\Mon_{\F^{=S_4}}\subseteq\Nor(\fS_4,W(E_6))/\fS_4\simeq\mC_2\times\mC_2,
	\end{align*}
	where the two isomorphism are simple computations; see also~\cite[Proposition 5.3]{BR24}. Then, we are left with generating enough elements in the monodromy group; unfortunately, we will not be able to use automorphism groups to do so. See~\cite[Lemma 5.15]{BR24} for a computational way of constructing monodromy, using certified tracking algorithms.
	
	In our case, we will directly compute the Galois group by first finding a nice presentation of $\cM_{=\fS_4}$ as a quotient stack and then solving for the lines, thus computing in which field extension their equations live. Notice that we already know this method to succeed as the group is contained in $\mC_2\times\mC_2$, hence solvable.
	
	We start with a remark about the geometry of $\fS_4$-cubic surfaces.
	\begin{remark}\label{rmk: geometry S4 surfaces}
		Let $\fS_4$ act on a smooth surface $S$ with exactly 6 Eckardt points. Then, $\Aut(S)\simeq\fS_4$ is generated by the involutions associated to the Eckardt points. It is a classical fact that all 6 Eckardt points lie on a plane $H$ with the disposition as in Figure~\ref{fig:6EckardtPoints}; see also~\cite[Subsection 6.5]{BLP23}. In particular, by the commutativity requirements between the Eckardt involutions in Lemma~\ref{lem: facts on Eckardt points}, it is easy to see that those correspond exactly to the simple transpositions in $\fS_4$. Moreover, $\fS_4$ fixes one point not contained in $H$, as $\fS_4$ is generated by three Eckardt involutions whose planes of fixed points intersect at one point. In general, given a faithful $\fS_4$-action on any smooth cubic surface $\S$, this is induced by 6 Eckardt points if and only if it is of the form above, with every simple transposition being involutions of $\P^3$ with fixed locus consisting of a plane and an isolated point lying on the surface.
		
		Using again Lemma~\ref{lem: facts on Eckardt points}, we see that the induced representation up to character is $\W_4:=\mathds{1}\oplus\V_3$, where $\V_3$ is the standard representation of $\fS_4$, that is, its only irreducible representation of dimension 3. Actually, only one representation is possible, not even up to character, as the proof of the following theorem will show; however, we are not interested in understanding which one it is.
	\end{remark}
	\begin{figure}
		\centering
		\includegraphics[width=0.5\textwidth]{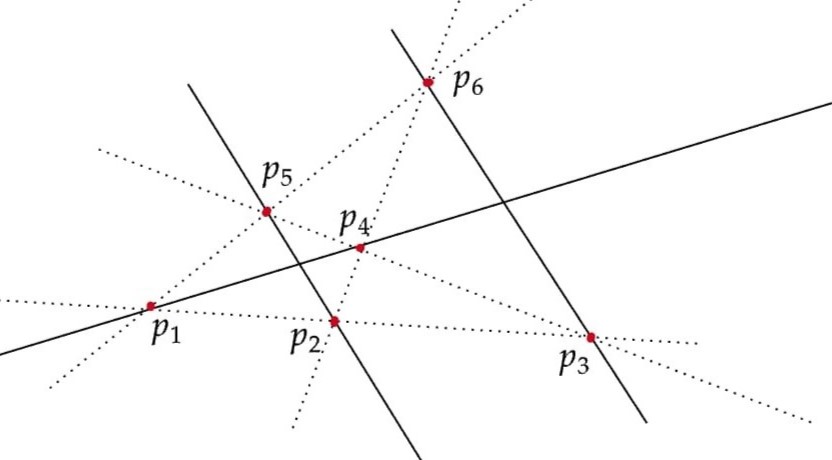} 
		\caption{Configuration of 6 Eckardt points}
		\label{fig:6EckardtPoints}
	\end{figure}
	\begin{theorem}\label{thm: S4 presentation}
		Consider the $\fS_4$-representation $\W_4=\mathds{1}\oplus\V_3$, where $\V_3$ is the 3-dimensional standard $\fS_4$-representation. Then, $\cM_{\fS_4}^{\P\W_4}$ is the closure of $\cM_{=\fS_4}$ in $\cM_{\fS_4}$, and
		\[
		\cM_{\fS_4}^{\P\W_4}\simeq\A^1\setminus\{0\}.
		\]
		Here, every $a\in\A^1\setminus\{0\}$ corresponds to the cubic surface $S_a$ with equation
		\begin{equation}\label{eq: equation S4 cubic surface}
			ax^3+x(y^2+z^2+w^2-yw-zw)+w(y-z)(w-y-z)
		\end{equation}
		and the $\fS_4$ action is obtained by restriction from $\fS_4\hookrightarrow\PGL_4$ given by
		\[
		(12)=\begin{bmatrix}
			1 & 0 & 0 & 0\\
			0 & 1 & 0 & 0\\
			0 & 0 & 1 & 0\\
			0 & 1 & 1 & -1
		\end{bmatrix},\quad (13)=\begin{bmatrix}
			1 & 0 & 0 & 0\\
			0 & -1 & 0 & 1\\
			0 & 0 & 1 & 0\\
			0 & 0 & 0 & -1
		\end{bmatrix},\quad (34)=\begin{bmatrix}
			1 & 0 & 0 & 0\\
			0 & 0 & -1 & 0\\
			0 & -1 & 0 & 0\\
			0 & -1 & -1 & 1
		\end{bmatrix}.
		\]
		Moreover, the Eckardt points inducing the action are
		\begin{align*}
			p_1=[0:0:0:1], && p_2=[0:1:0:0], && p_3&=[0:1:0:1]\\
			p_4=[0:1:1:1], && p_5=[0:0:1:0], && p_6&=[0:0:1:1].
		\end{align*}
	\end{theorem}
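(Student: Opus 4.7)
The plan is to use the rigidity of the projective representation $\P\W_4$ to fix the $\fS_4$-embedding into $\PGL_4$ up to conjugation, and then describe the moduli via invariant theory on cubic forms. By Lemma~\ref{lem: cubic surface projective representation is locally constant}, every object of $\cM_{\fS_4}^{\P\W_4}$ determines, after an étale cover, an embedding $\fS_4 \hookrightarrow \PGL_4$ with prescribed projective representation, and any two such embeddings are conjugate in $\PGL_4$, so we may assume the action is given by the three matrices displayed in the statement. I would verify (i) that these satisfy the Coxeter relations of $\fS_4$ and (ii) that $\C^4$ decomposes as $\mathds{1} \oplus \V_3$ with $x$ spanning the trivial subrepresentation. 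An eigenvalue computation on each of the six transposition matrices then picks out the isolated fixed points (off the fixed hyperplane), which by Lemma~\ref{lem: facts on Eckardt points} are precisely the Eckardt points $p_1, \ldots, p_6$ at the stated coordinates.

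Next, I compute the $\fS_4$-invariant cubic forms on $\W_4$. By Chevalley--Shephard--Todd applied to the reflection action on $\V_3$, the ring $\Sym(\V_3^*)^{\fS_4}$ is polynomial on generators in degrees $2, 3, 4$; combined with $x \in \mathds{1}$, this yields a three-dimensional space of invariant cubic forms spanned by $\{x^3,\, x\, I_2,\, I_3\}$, where a direct calculation identifies $I_2 = y^2+z^2+w^2-yw-zw$ and $I_3 = w(y-z)(w-y-z)$ up to scalar. By Schur's lemma the centralizer $\Zen(\fS_4, \PGL_4) \simeq \Gm$ acts on $\W_4$ by scaling $x$, inducing the weighted action with weights $(3,1,0)$ on the coefficients $(a, b, c)$ of an invariant cubic $a x^3 + b\, x I_2 + c\, I_3$. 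Quotienting the smooth locus in $\P^2_{[a:b:c]}$ by this $\Gm$-action yields the moduli: on the chart $\{b \neq 0,\, c \neq 0\}$ we may normalize $b = c = 1$, producing exactly the family~\eqref{eq: equation S4 cubic surface}, while the strata $\{b = 0\}$ and $\{c = 0\}$ give only reducible or non-reduced cubics (for instance, $c = 0$ forces $x$ to divide the equation, so the surface contains the plane $\{x = 0\}$), hence lie outside $\cM$.

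Finally, I verify the moduli identification, the closure statement, and the Eckardt point description. A direct Jacobian computation shows that $S_a$ is smooth exactly for $a \neq 0$, so the coarse moduli space is $\A^1 \setminus \{0\}$. The stack structure is trivial because $\fS_4$ is complete: by Lemma~\ref{lem: Aut comparison PhiG}, the stabilizer of $(S_a, \rho)$ in $\cM_{\fS_4}$ is $\Zen(\fS_4, \Aut(S_a))$, which is trivial on $\cM_{=\fS_4}$ and remains trivial on the closure, since for the Clebsch surface with $\Aut(S) \simeq \fS_5$ one checks directly that $\Zen(\fS_4, \fS_5) = 1$. The locus $\cM_{=\fS_4}$ omits only finitely many values of $a$ corresponding to surfaces with $\Aut(S_a) \supsetneq \fS_4$---by the classification in~\cite[Table 5]{BLP23} only the Clebsch cubic occurs---so $\cM_{=\fS_4}$ is dense in $\cM_{\fS_4}^{\P\W_4}$, proving the closure statement. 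Each $p_i$ lies on $S_a$ for every $a$ (at $x = 0$ the equation reduces to $w(y-z)(w-y-z)$, which vanishes at each $p_i$) and is fixed by the corresponding transposition involution. The main obstacle is the explicit invariant-theoretic computation in the second paragraph: identifying $I_2$ and $I_3$ concretely for the given matrices, and checking carefully that no further $\fS_4$-equivariant projective transformation identifies distinct members of the family beyond the $\Gm$-centralizer already accounted for.
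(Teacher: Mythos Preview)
Your proposal is correct and follows essentially the same invariant-theoretic route as the paper, but with the logic reversed. The paper starts from the geometry of the six Eckardt points (Remark~\ref{rmk: geometry S4 surfaces}): it places $p_1,\dots,p_6$ and the global fixed point $q=[1:0:0:0]$ at specified coordinates, \emph{derives} the transposition matrices from the constraints of Lemma~\ref{lem: facts on Eckardt points}, and then finds the invariant cubic by first noting that $S$ must contain the three lines $w=0$, $y-z=0$, $w-y-z=0$ in $\{x=0\}$ (forcing the $I_3$ term), after which invariance of the remaining quadratic factor is immediate. You instead take the matrices as given, recover the Eckardt points as their isolated eigenvectors, and invoke Chevalley--Shephard--Todd on the reflection representation $\V_3$ to produce $I_2,I_3$ directly. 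Both routes arrive at the same three-dimensional space of cubics and the same $\Gm$-centralizer quotient; yours is a bit cleaner representation-theoretically, while the paper's makes the Eckardt geometry---and hence why this particular $\P\W_4$-component is singled out---more transparent.

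One point you should not leave implicit: a priori the defining cubic of $S$ is only a semi-invariant, i.e.\ an eigenvector for some character of $\fS_4$, so in your framework you must also verify that $\Sym^3\W_4^*$ contains no copy of the sign representation (a short character computation). The paper sidesteps this because the $I_3$ term is forced geometrically with nonzero coefficient and is itself invariant, pinning the character to be trivial. Separately, your claim that ``only the Clebsch cubic occurs'' among the $S_a$ with $\Aut(S_a)\supsetneq\fS_4$ is stronger than what you need and would require a careful check against the classification; for the closure statement it suffices, as the paper argues, that $\cM_{\fS_4}^{\P\W_4}$ is irreducible (being $\A^1\setminus\{0\}$) and open-and-closed in $\cM_{\fS_4}$ by Lemma~\ref{lem: cubic surface projective representation is locally constant}.
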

	\begin{proof}
		Let $(f:S\rightarrow B,\rho:\fS_4\hookrightarrow\Aut(f))$ be an element of $\cM_{\fS_4}^{\P\W_4}$. By Remark~\ref{rmk: geometry S4 surfaces}, we know that we can describe the action in terms of a configuration of 6 Eckardt points as in Figure~\ref{fig:6EckardtPoints}, so that $(12)$ corresponds to $p_1$, $(13)$ to $p_2$, and $(34)$ to $p_4$. Here, $p_i:B\rightarrow S$ are considered as sections to $f$. Let us work Zariski-locally on $B$, so that we can assume it to be affine, say $\Spec R$ for a ring $R$. Locally on $B$ we have a trivialization of $f_*\omega_{S/B}^{\vee}$, inducing a closed embedding $S\subset\P_R^3=:\P^3$. After applying a change of coordinates, we can assume that
		\begin{align*}
			p_1=[0:0:0:1], && p_2=[0:1:0:0], && p_3&=[0:1:0:1]\\
			p_4=[0:1:1:1], && p_5=[0:0:1:0], && p_6&=[0:0:1:1];
		\end{align*}
		in particular, the last part of the theorem will be automatically satisfied. In the same way, we can assume that the point fixed by $\fS_4$ is $q:=[1:0:0:0]$. We claim that this determines the $\fS_4$-action on $\P^3$, and the induced injection $\fS_4\hookrightarrow\PGL_4$ is the one described in the statement. We show this only for $(12)$, as the other cases are analogous. Recall that $(12)$ fixes $q$, $p_1$ and $p_2$, while it swaps $p_5$ with $p_6$, and $p_2$ with $p_3$. This implies that
		\[
		(12)=\begin{bmatrix}
			1 & 0 & 0 & 0\\
			0 & b & 0 & 0\\
			0 & 0 & b & 0\\
			0 & b & b & -b
		\end{bmatrix}
		\]
		for some $b\not=0$. Imposing that the fixed point locus contains a plane, we get that $b=1$, as wanted.
		
		Now, let us describe the possible polynomials $g=g(x,y,z,w)$ defining $S\subset\P^3$. We already know that $S$ contains the three lines on the plane $x=0$ with equations $w=0$, $y-z=0$ and $w-y-z=0$, respectively. It follows that
		\begin{equation}\label{eq: S4 polynomial first form}
			g(x,y,z,w)=x\cdot g_2(x,y,z,w)+cw(y-z)(w-y-z)
		\end{equation}
		for some $c\in\C\setminus\{0\}$, and $g_2\not=0$ of degree 2. Notice that $w(y-z)(w-y-z)$ is $\fS_4$-invariant, hence $g_2$ is invariant as well, not even up to a character. Therefore,
		\begin{equation}\label{eq: S4 polynomial second form}
			g(x,y,z,w)=ax^3+b(y^2+z^2+w^2-yw-zw)+cw(y-z)(w-y-z)
		\end{equation}
		for $abc\not=0$, as it needs to be smooth. At the moment, we are not interested in completely determining when the polynomial is smooth.
		
		We want to know when two smooth cubic surfaces with equations like~\eqref{eq: S4 polynomial second form} are isomorphic $\fS_4$-equivariantly. As an $\fS_4$-equivariant isomorphism between them would yield an isomorphism between the induced $\fS_4$-representations, the isomorphism extends to an automorphism of $\P^3$ commuting with $\fS_4$, thus corresponding to a matrix in the centralizer $Z(\fS_4,\PGL_4)$ of $\fS_4$ in $\PGL_4$. This can be easily characterized as the group of matrices of the form
		\[
		A=\begin{bmatrix}
			\lambda & 0 & 0 & 0\\
			0 & \mu & 0 & 0\\
			0 & 0 & \mu & 0\\
			0 & 0 & 0 & \mu
		\end{bmatrix}
		\]
		for any non-zero $\lambda$, $\mu$, hence forming a subgroup of $\PGL_4$ isomorphic to $\Gm$.
		
		Let $\Delta\subset\P^2$ be the locus of points $[a:b:c]\in\P^2$ whose associated cubic~\eqref{eq: S4 polynomial second form} is singular. What we have done above shows that the morphism
		\[
		\begin{tikzcd}
			\P^2\setminus\Delta\arrow[r] & \cM_{\fS_4}^{\P\W_4}
		\end{tikzcd}
		\]
		is a $\Gm$-torsor, where an element $A$ as above acts by
		\[
		A\cdot[a:b:c]=[\lambda^{-3}a,\lambda^{-1}\mu^{-2}b,\mu^{-3}c].
		\]
		Recall that $a$, $b$ and $c$ are different from zero. Therefore, by setting $c=1$, we can rewrite $\cM_{\fS_4}^{\P\W_4}$ as the quotient of an open subscheme of $\A^2$, with coordinates $(a,b)$, where $\lambda\in\Gm$ acts as
		\[
		\lambda\cdot(a,b)=(\lambda^{-3}a,\lambda^{-1}b).
		\]
		Again, as $b\not=0$, the quotient is isomorphic to an open subscheme of $\A^1\setminus\{0\}$, and for any $a\in\A^1\setminus\{0\}$ the corresponding surface has equation
		\[
		g(x,y,z,w)=ax^3+x(y^2+z^2+w^2-yw-zw)+w(y-z)(w-y-z).
		\]
		One then verifies that this is smooth if and only if $a\not=0$.
		
		As a byproduct, we have showed that $\cM_{\fS_4}^{\P\W_4}$ is irreducible and contains $\cM_{=\fS_4}$ as an open substack, hence it is contained in the closure of $\cM_{=\fS_4}$ in $\cM_{\fS_4}$. On the other hand, by Lemma~\ref{lem: cubic surface projective representation is locally constant} we know that $\cM_{\fS_4}^{\P\W_4}$ is open and closed in $\cM_{\fS_4}$, hence the previous containment is actually an equality.
		\end{proof}
		\begin{theorem}\label{thm: S4 monodromy}
			The field of definition of the lines over the generic surface $S_a$ over $\C(a)$ is $K:=\C\left(\sqrt{a},\sqrt{2a+1}\right)$. Moreover, $K/\C(a)$ is a Galois extension with Galois group $\mC_2\times\mC_2$. In particular,
			\begin{align*}
				\Mon_{\F^{=\fS_4}}=\Gal(K/\C(a))\simeq\mC_2\times\mC_2,\\
				\Mon_{\cF^{=\fS_4}}\simeq\fS_4\times\mC_2\times\mC_2.
			\end{align*}
		\end{theorem}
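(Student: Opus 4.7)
The plan is to use the explicit parametrization $\cM_{=\fS_4}\simeq\A^1\setminus\{0\}$ established in Theorem~\ref{thm: S4 presentation} together with the explicit equation of $S_a$ to reduce the question to finding two explicit quadratic subfields of the field of definition $K$ of the $27$ lines. Since $\fS_4$ is complete, Corollary~\ref{cor: inclusion in normalizer} provides both the splitting $\Mon_{\cF^{=\fS_4}}\simeq\fS_4\times\Mon_{\F^{=\fS_4}}$ and the inclusion $\Mon_{\F^{=\fS_4}}\subseteq\Nor(\fS_4,W(E_6))/\fS_4\simeq\mC_2\times\mC_2$. Consequently $[K:\C(a)]\leq 4$, and it suffices to exhibit two independent quadratic extensions of $\C(a)$ inside $K$.

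The first extension comes from the three lines through the Eckardt point $p_1=[0:0:0:1]$. Computing $\nabla g$ at $p_1$ shows that the tangent plane to $S_a$ at $p_1$ is $x+y-z=0$. Substituting $x=z-y$ into $g$ and factoring I expect to obtain
\[
g|_{x=z-y}=(z-y)\bigl[(a+1)y^2-2ayz+(a+1)z^2\bigr],
\]
so the three lines through $p_1$ consist of $\{x=0,\,y=z\}$ (which lies in the tritangent plane $x=0$) together with two lines whose ratio $y/z$ is a root of the bracketed quadratic, whose discriminant is $-4(2a+1)$ up to a square. These two lines are therefore defined over $\C(a,\sqrt{2a+1})$, giving $\sqrt{2a+1}\in K$.

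For the second extension, I would exploit the fact that a line on a smooth cubic surface lies in exactly five tritangent planes. Setting $L:=\{x=0,\,y=z\}$, parametrize the pencil of planes through $L$ by $\pi_\lambda:\ x=\lambda(y-z)$; restricting $g$ to $\pi_\lambda$ and removing the factor $(y-z)$ corresponding to $L$ itself leaves a ternary quadratic form $Q_\lambda$ in $(y,z,w)$, and $\pi_\lambda$ is tritangent precisely when $Q_\lambda$ is degenerate. A short row reduction on the symmetric matrix $M_\lambda$ of $Q_\lambda$ should give
\[
\det M_\lambda=\tfrac{1}{2}\,\lambda(\lambda^2-1)(2a\lambda^2+1),
\]
so the five roots produce the five tritangent planes through $L$: $\lambda=0$ recovers $\{x=0\}$, $\lambda=\pm 1$ recovers the Eckardt tangent planes at $p_1$ and $p_4$, and $\lambda=\pm(-2a)^{-1/2}$ produce two new tritangent planes conjugate over $\C(a,\sqrt{a})$. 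The two non-$L$ lines inside each such plane are then defined over an extension containing $\sqrt{-2a}$, so $\sqrt{a}\in K$.

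Finally, the class of $a(2a+1)$ in $\C(a)^{\times}/(\C(a)^{\times})^2$ is nontrivial, since $a(2a+1)$ has simple zeros at $a=0$ and $a=-1/2$; hence $\C(a,\sqrt{a})$ and $\C(a,\sqrt{2a+1})$ are distinct quadratic extensions. Therefore $K\supseteq\C(a,\sqrt{a},\sqrt{2a+1})$, and combined with the upper bound $[K:\C(a)]\leq 4$ we obtain $K=\C(a,\sqrt{a},\sqrt{2a+1})$ and $\Gal(K/\C(a))\simeq\mC_2\times\mC_2$; the description of $\Mon_{\cF^{=\fS_4}}$ then follows from the splitting in Corollary~\ref{cor: inclusion in normalizer}. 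The main technical step is the tritangent-pencil computation: one needs to perform the explicit evaluation of $\det M_\lambda$ and verify that for generic $a$ the degenerate quadric $Q_\lambda$ really splits into two distinct linear factors over $\C(a,\sqrt{a})$, so that no further square roots sneak in beyond those already extracted.
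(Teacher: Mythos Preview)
Your argument is correct and follows essentially the same route as the paper: extract the two needed square roots by looking at residual conics in tritangent planes through a known line in the plane $x=0$. The computations you indicate (the tangent plane at $p_1$, and $\det M_\lambda=\tfrac12\lambda(\lambda^2-1)(2a\lambda^2+1)$) check out.

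There is one genuine difference worth noting. The paper, after finding $\sqrt a$ and $\sqrt{2a+1}$, verifies \emph{directly} that all $27$ lines are defined over $\C(\sqrt a,\sqrt{2a+1})$: it factors the residual quadric for $u=i\sqrt{2a}$ explicitly and invokes the $\fS_4$-action to cover the remaining lines. You instead invoke the a~priori bound $\Mon_{\F^{=\fS_4}}\subseteq\Nor(\fS_4,W(E_6))/\fS_4\simeq\mC_2\times\mC_2$ from Corollary~\ref{cor: inclusion in normalizer}, so that exhibiting two independent quadratic subfields already forces $K=\C(\sqrt a,\sqrt{2a+1})$. This is slightly more economical, at the cost of depending on the normalizer computation. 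In particular, your closing caveat---that one must still check the degenerate quadric $Q_\lambda$ for $\lambda^2=-1/(2a)$ splits over $\C(a,\sqrt a)$ without introducing further square roots---is in fact unnecessary in your setup: the upper bound $[K:\C(a)]\le 4$ already rules out any further extension, so once you have shown $\sqrt a,\sqrt{2a+1}\in K$ (which only requires that the two planes $\pi_{\pm}$, respectively the two non-$L_2$ lines through $p_1$, are genuinely swapped by the corresponding Galois involutions) you are done.
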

		\begin{proof}
			The first part is a straightforward computation, that we outline now. It is clear that three lines are $L_1=\{x=w=0\}$, $L_2=\{x=y-z=0\}$, $L_3=\{x=w-y-z=0\}$, and that these contain all the six chosen Eckardt points $p_1,\ldots,p_6$. Moreover, $\fS_4$ permutes these three lines, hence the field of definition of the set of lines intersecting $L_i$ does not depend on $i$ if this is a Galois extension of $\C(a)$. Then, one can substitute $w=ux$ for a parameter $u$, and impose that the quadric
			\[
			ax^2+(y^2+z^2+u^2x^2-uxy-uxz)+u(y-z)(ux-y-z)
			\]
			obtained by substitution and `dividing by $x$' is singular.
			Imposing that this factors, we get four possible values for $u$:
			\begin{align*}
				u=1,\qquad u=-1,\qquad u=i\sqrt{2}\sqrt{a},\qquad u=-i\sqrt{2}\sqrt{a}.
			\end{align*}
			The first two correspond to the tritangent Eckardt planes passing through $p_2$ and $p_5$, respectively. The last two correspond to the other two tritangent planes containing $L_1$; call these two planes by $D_2$ and $D_5$. In particular, we need to add $\sqrt{a}$ to be able to solve for the lines.
			
			As $\fS_4$ permutes the two Eckardt points $p_2$ and $p_5$, and it is easy to check that it also permutes two non-Eckardt planes $D_2$ and $D_5$. In particular, it is enough to solve for the lines for $u=1$ and $u=i\sqrt{2}\sqrt{a}$. In the first case, we want to split the equation
			\[
			(a+1)x^2-2xz+2z^2
			\]
			hence we need to add $\sqrt{2a+1}$, which suffice for the second case as well. Indeed, for $u=i\sqrt{2a}$ the quadric splits as
			\begin{align*}
				&(i\sqrt{a}x+(i\sqrt{a}-\sqrt{2}/2+i\sqrt{2}\sqrt{2a+1}/2)y+(b-\sqrt{2}/2+i\sqrt{2}\sqrt{2a+1}/2)z)\\
				\cdot&(i\sqrt{a}x+(i\sqrt{a}-\sqrt{2}/2-i\sqrt{2}\sqrt{2a+1}/2)y+(b-\sqrt{2}/2-i\sqrt{2}\sqrt{2a+1}/2)z).
			\end{align*}
			Therefore, the lines are defined after the field extension $\C(\sqrt{a},\sqrt{2a+1})$, whose Galois group (over $\C(a)$) is clearly $\mC_2\times\mC_2$. Therefore, $\Mon_{\cF_{=\fS_4}}\simeq\mC_2\times\mC_2$.
			
			By Corollary~\ref{cor: generic gerbe or isom complete case} and Corollary~\ref{cor: inclusion in normalizer}, we have $\Mon_{\F^{=\fS_4}}=\Mon_{\cF_{=\fS_4}}$, and $\Mon_{\cF^{=\fS_4}}\simeq\fS_4\times\Mon_{\F^{=\fS_4}}$. This concludes.
		\end{proof}
		\begin{remark}\label{rmk: difference S4}
			In this remark we explain the difference between the monodromy groups of $\cF^{=\fS_4}$ and $\F^{=\fS_4}$. Let $Z^{\fS_4}\subset\P^{19}$ be the locally-closed subscheme that parametrizes smooth embedded cubic surfaces with automorphism group containing $\fS_4$, and $Z^{=\fS_4}$ its open subscheme where the automorphism group coincides with $\fS_4$; they are both irreducible by Remark~\ref{rmk: irreducibility of MG}. Let $I\rightarrow\P^{19}$ be the cover given by the incident variety of the 27 lines. Since $Z^{=\fS_4}\rightarrow\cM^{=\fS_4}$ is a $\PGL_4$-torsor, the monodromy of the induced cover $f:I^{=\fS_4}\rightarrow Z^{=\fS_4}$ is equal to $\Mon_{\cF^{=\fS_4}}\simeq\fS_4\times(\mC_2)^2$, by Corollary~\ref{cor: invariance monodromy pullback flat maps with connected fibers}. Then, $\fS_4$ records the difference between $f$ and the cover over the coarse moduli space of non-embedded cubic surfaces $\L^{=\fS_4}\rightarrow\M^{=\fS_4}$. In the second case we saw how we can assume the cubic surface to have equation~\eqref{eq: equation S4 cubic surface}, with fixed Eckardt points as in Theorem~\ref{thm: S4 presentation}. On the other hand, the equation of a general embedded $\fS_4$-symmetric cubic surface is more complicated, and the monodromy of $\Mon_{f}=\Mon_{\cF^{=\fS_4}}\simeq\fS_4\times(\mC_2)^2$ acts non-trivially on the set of six Eckardt points. One can show that the monodromy group of the cover associated to these 6 Eckardt points has indeed Galois group $\fS_4$; since giving the $\fS_4$-action is the same as fixing the 6 Eckardt points by Remark~\ref{rmk: geometry S4 surfaces}, this explains the extra factor $\fS_4$. Notice that in both cases the monodromy group is solvable, and encodes the Galois group of the field extension needed to define the lines in the two settings.
		\end{remark}
		\subsection{$\fS_3$-Cubic Surfaces}\label{subsec: S3 cubic surfaces}
		In this section we compute the monodromy of the étale cover $\cF^{=\fS_3}:\cL^{=\fS_3}\rightarrow\cM^{=\fS_3}$; again, $\cM^{=\fS_3}$ is irreducible by Remark~\ref{rmk: irreducibility of MG}. Moreover, $\cM_{=\fS_3}\simeq\M^{=\fS_3}$ by Corollary~\ref{cor: generic gerbe or isom complete case}, as $\fS_3$ is complete, thus it is irreducible. By the same argument of Subsection~\ref{subsec: S4 cubic surfaces}, it is enough to compute the monodromy of $\F^{=\fS_3}:\L^{=\fS_3}\rightarrow\M^{=\fS_3}$, which satisfies $\Mon_{\F^{=\fS_3}}\simeq\Mon_{\cF_{=\fS_3}}$ by the above isomorphism.
		
		To compute $\Mon_{\cF_{=\fS_3}}$, we could use the same strategy as Subsection~\ref{subsec: S4 cubic surfaces}, that is finding a presentation for $\cM_{=\fS_3}$ and then solve for the lines. As we will see that the monodromy group is solvable, being isomorphic to $\fS_3\times\fS_3$, this method would certainly work. However, as the group is bigger, this would be an harder challenge. Therefore, we use a different method, which consists in first finding geometric constraints that the Galois group needs to satisfy, then use the hidden loops coming from the stabilizers to produce elements of the monodromy group.
		We start by understanding the closure of $\cM_{=\fS_3}$ in $\cM_{\fS_3}$.
		\begin{lemma}\label{lem: description of S3 closure}
			Set $\W_3=\mathds{1}\oplus\mathds{1}\oplus\V_2$, where $\V_2$ is the 2-dimensional standard $\fS_3$-representation. Then, $\cM_{\fS_3}^{\P\W_3}$ contains the closure of $\cM_{=\fS_3}$ in $\cM_{\fS_3}$.
			
			More precisely, every $\fS_3$-action arising as the automorphism subgroup generated by the involutions of three aligned Eckardt point has associated projective representation $\P\W_3$, and vice versa. The line $\P\V_2$ is the line passing through the three Eckardt points.
		\end{lemma}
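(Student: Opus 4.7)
The plan is to first prove the pointwise ``more precisely'' equivalence, and then deduce the closure statement from it together with the fact that $\cM_{\fS_3}^{\P\W_3}$ is open and closed in $\cM_{\fS_3}$ by Lemma~\ref{lem: cubic surface projective representation is locally constant}.

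For the forward direction, suppose the $\fS_3$-action on a smooth cubic surface $S$ is generated by Eckardt involutions $\tau_1,\tau_2,\tau_3$ at three collinear Eckardt points $p_1,p_2,p_3$ lying on a line $M\not\subset S$ (the non-commuting case of Lemma~\ref{lem: facts on Eckardt points}). Each $\tau_i$ lifts to a linear involution of $\V=\H^0(\omega_S^{\vee})$ whose isolated projective fixed point $p_i$ corresponds to a $1$-dimensional eigenspace, up to character twist. Since $\fS_3$ permutes the $p_i$, it preserves $M$, so the $2$-dimensional lift $U\subset\V$ of $M$ is $\fS_3$-stable; it contains the three distinct distinguished eigenlines of the $\tau_i$, so each transposition acts on $U$ with trace $0$, while the $3$-cycle permutes the $p_i$ cyclically and hence acts on $U$ with primitive third-root-of-unity eigenvalues and trace $-1$. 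These characters identify $U\simeq\V_2$. On any $\fS_3$-stable complement $U'\subset\V$, each $\tau_i$ acts as $\pm\mathrm{id}$, so $\fS_3$ acts on $U'$ through a character $\chi$, giving $U'\simeq\chi^{\oplus 2}$. After twisting by $\chi^{-1}$, projectively $\P\V\simeq\P\W_3$ with $M=\P\V_2$.

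For the converse, let $(S,\rho)\in\cM_{\fS_3}^{\P\W_3}$. Because each $\tau_i$ acts by a scalar on the $\mathds{1}^{\oplus 2}$ summand of $\W_3$, its distinguished eigenline on $\V$ (the $1$-dimensional eigenspace giving the isolated projective fixed point $p_i$) lies inside $\V_2$ independently of character twist, so the three distinct points $p_i$ all lie on $M:=\P\V_2$. The intersection $M\cap S$ is $\fS_3$-stable, so is either $M$ itself or a length-$3$ orbit on $M\simeq\P^1$. The case $M\subset S$ would force $p_i\in S$ and therefore make the $\tau_i$ into pairwise-commuting Eckardt involutions by Lemma~\ref{lem: facts on Eckardt points}, contradicting faithfulness. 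The only size-$3$ $\fS_3$-orbits on $M$ are $\{p_1,p_2,p_3\}$ and the ``opposite'' orbit $\{q_1,q_2,q_3\}$ of the other eigenlines of the transpositions on $\V_2$. The main obstacle is excluding $M\cap S=\{q_1,q_2,q_3\}$ within the closure of $\cM_{=\fS_3}$: in that case $p_i\notin S$ and the $\tau_i$ act on $S$ as non-Eckardt involutions. I would resolve this by explicitly parametrizing the $7$-dimensional linear system of $\fS_3$-invariant cubics in $\Sym^3\W_3^{\vee}$ (via a direct character count) and checking that the branch with $p_i\notin S$ either yields singular surfaces or generically has strictly larger automorphism group; alternatively, one can invoke the classification of cubic surfaces with $\Aut(S)\simeq\fS_3$ in~\cite[Table~9.6]{Dol12} and~\cite[Table~5]{BLP23}, where the $\fS_3$ always arises from three collinear Eckardt involutions.

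Combining both directions yields $\cM_{=\fS_3}\subseteq\cM_{\fS_3}^{\P\W_3}$, and since the right-hand side is open and closed in $\cM_{\fS_3}$, the closure of $\cM_{=\fS_3}$ in $\cM_{\fS_3}$ is contained in $\cM_{\fS_3}^{\P\W_3}$, proving the lemma.
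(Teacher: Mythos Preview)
Your overall strategy---prove the pointwise equivalence and then invoke Lemma~\ref{lem: cubic surface projective representation is locally constant}---is exactly what the paper does.

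The forward direction is correct and close in spirit to the paper's, though the paper argues more geometrically: it takes the cone $C_M$ over the line $M$ through the three Eckardt points as the $\V_2$-summand, and then finds the complementary trivial summand as the cone over the line $L=H_1\cap H_2$, where $H_i$ is the fixed hyperplane of a transposition. Your trace computation is a fine substitute.

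For the converse, the paper is much shorter: once the projective representation is $\P\W_3$, each transposition has fixed locus on $\P^3$ equal to a hyperplane together with an isolated point $p_i$, and then Lemma~\ref{lem: facts on Eckardt points}(1) is invoked to conclude that $p_i$ is an Eckardt point. You are right to notice that this last step requires $p_i\in S$, a point the paper takes for granted. However, you do not need to parametrize the full $7$-dimensional invariant system or appeal to the classification tables to resolve this. There is a one-line parity argument: the defining cubic $g$ transforms under $\fS_3$ by a character, and the sign-isotypic part of $\Sym^3\W_3^\vee$ lies entirely in $\Sym^3\V_2^\vee$, hence defines reducible (singular) surfaces. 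So for smooth $S$ the cubic is genuinely $\fS_3$-invariant, and then for the $(-1)$-eigenvector $e_i$ of $\sigma_i$ one has $g(e_i)=g(\sigma_i e_i)=g(-e_i)=-g(e_i)$, forcing $g(e_i)=0$ and hence $p_i\in S$. This kills your ``$\{q_i\}$'' branch outright and finishes the converse without further work.
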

		\begin{proof}
			By Lemma~\ref{lem: cubic surface projective representation is locally constant}, it is enough to show that $\cM_{\fS_3}^{\P\W_3}$ contains $\cM_{=\fS_3}$ in $\cM_{\fS_3}$. Therefore, it is enough to prove the second part of the statement.
			As usual, we can work with smooth cubic surfaces embedded in $\P^3$, and our goal is to compute the extended $\fS_3$-action on $\P^3$.
			Suppose that the $\fS_3$-action is induced by three Eckardt points, and call $M$ the line passing through them. Then, the corresponding affine cone $C_M$, which is a plane, gives a subrepresentation isomorphic to $\mathrm{V}_2$. Let $H_1$ and $H_2$ be the two hyperplanes consisting of fixed points of two fixed transposition in $\mathrm{S}_3$. Then, their intersection is a line $L$, whose points are fixed under the $\mathrm{S}_3$-action. It is easy to check that $L\cap M=\emptyset$. Let $C_L$ be the affine cone over $L$, hence a plane; then, the restriction of the action to such plane is the same on every line, i.e. $\mathds{1}\oplus\mathds{1}$ or $\mathrm{V}_1\oplus\mathrm{V}_1$, where $\V_1$ is the sign representation. We are using the fact that $\V_2\otimes\V_1\simeq\V_2$.
			
			For the converse, suppose that the associated projective representation is $\P\W_3$. Then, the three elements $\sigma_1$, $\sigma_2$, $\sigma_3$ of order 2 are involutions of $\P^3$, with fixed point locus of $\sigma_i$ equal to the disjoint union of a plane and a point $p_i$. It follows that $\sigma_i$ is induced by the involution associated to the Eckardt point $p_i$. This concludes.
		\end{proof}
		\begin{definition}
			We denote by $\cF_{\fS_3}^{\P\W_3}:\cL_{\fS_3}^{\P\W_3}\rightarrow\cM_{\fS_3}^{\P\W_3}$ the restriction of $\cF_{\fS_3}$ over $\cM_{\fS_3}^{\P\W_3}$.
		\end{definition}
		Now, we find geometric constraints for the Galois group, which are equivalent to those of Corollary~\ref{cor: inclusion in normalizer}. We avoid using the corollary to give an alternative way of finding the constraints without necessarily knowing the mondromy group of $\cF$. Since the monodromy needs to preserve the $\fS_3$-action, we get that the Galois group of $\cF_{\fS_3}^{\P\W_3}$ fixes the three Eckardt points, and preserves the $\fS_3$-orbits of the lines. In Figure~\ref{fig:3EckardtPoints} we describe the configuration of lines on a generic $\fS_3$-surface.
		\begin{figure}
			\centering
			\includegraphics[width=0.6\textwidth]{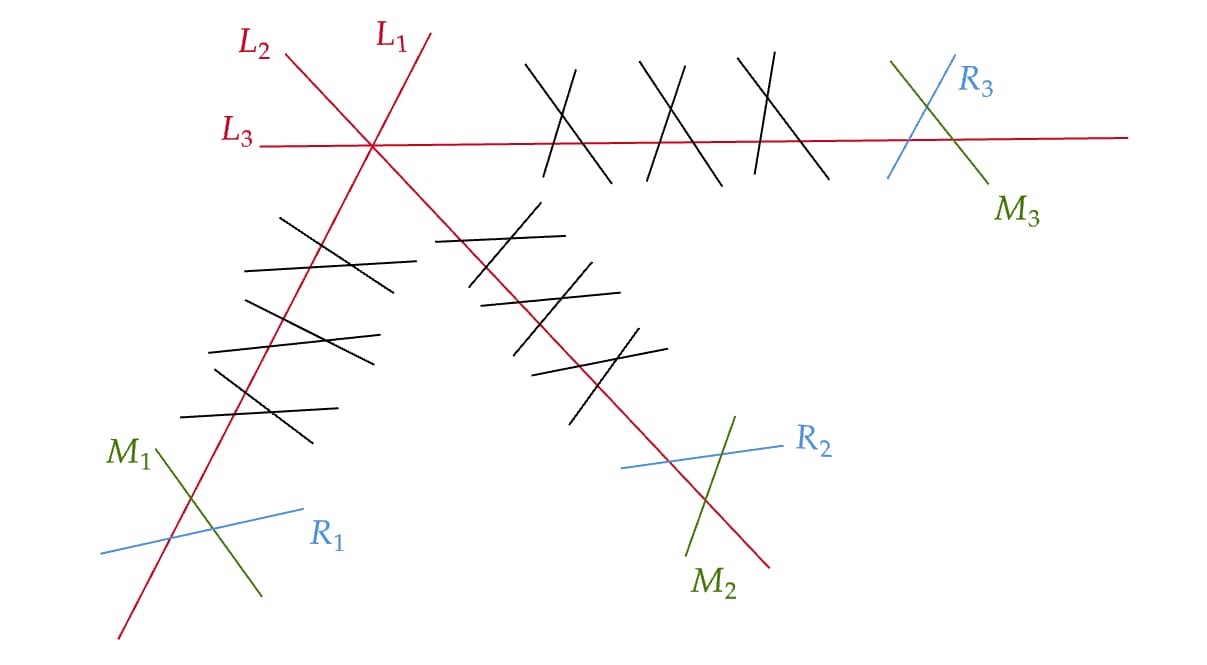} 
			\caption{Configuration of lines for $\fS_3$-cubic surface}
			\label{fig:3EckardtPoints}
		\end{figure}
		\begin{lemma}\label{lem: S3 monodromy constraint}
			The monodromy group of $\cF_{\fS_3}^{\P\W_3}$ is a subgroup of $\fS_3\times\fS_3$.
		\end{lemma}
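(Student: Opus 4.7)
The plan is to apply Corollary~\ref{cor: inclusion in normalizer} to bound the monodromy by the centralizer $\Zen(\fS_3,\W(E_6))$, and then identify this centralizer explicitly as $\fS_3\times\fS_3$, using both a root-theoretic computation and the orbit analysis hinted at in the discussion preceding the lemma.

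First I would check that $\cM_{=\fS_3}$ is an irreducible dense open substack of $\cM_{\fS_3}^{\P\W_3}$. The inclusion comes from Lemma~\ref{lem: description of S3 closure}, and irreducibility follows because $\fS_3$ is complete, so Corollary~\ref{cor: generic gerbe or isom complete case} identifies $\cM_{=\fS_3}$ with the irreducible $\M^{=\fS_3}$ (irreducibility from Remark~\ref{rmk: irreducibility of MG}). Then Corollary~\ref{cor: independence open substack} gives $\Mon_{\cF_{\fS_3}^{\P\W_3}}=\Mon_{\cF_{=\fS_3}}$. Since any automorphism of a smooth cubic surface acts faithfully on its $27$ lines (see the proof of Proposition~\ref{prop: monodromy general case}), the hypotheses of Corollary~\ref{cor: inclusion in normalizer} are satisfied, and the centralizer part of its conclusion yields
\[
\Mon_{\cF_{=\fS_3}}\subseteq\Zen(\fS_3,\W(E_6)).
\]

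Next I would identify this centralizer with $\fS_3\times\fS_3$. The three Eckardt involutions generating our $\fS_3$ are reflections in $\W(E_6)$ whose associated roots span a sub-root-system of type $A_2$, matching the fact from Lemma~\ref{lem: facts on Eckardt points} that three aligned Eckardt involutions generate $\W(A_2)=\fS_3$. A standard Borel--de Siebenthal-type computation shows that the orthogonal complement of an $A_2$ inside $E_6$ is itself a sub-root-system of type $A_2\times A_2$, yielding the lower bound $\fS_3\times\fS_3=\W(A_2\times A_2)\subseteq\Zen(\fS_3,\W(E_6))$.

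For the matching upper bound, I would use the geometric hint preceding the lemma: any $g$ centralizing $\fS_3$ must fix each Eckardt point $e_i$, since $e_i$ is the unique isolated fixed point of the corresponding involution $\sigma_i$ on $S$, so it permutes the three lines through each $e_i$ among themselves, and $\fS_3$-equivariance couples the three induced permutations into a single element of $\fS_3$. A parallel orbit analysis for the remaining $18$ non-Eckardt lines, using the configuration in Figure~\ref{fig:3EckardtPoints}, supplies the second $\fS_3$ factor. The main obstacle here is the bookkeeping for the orbit structure on the $18$ non-Eckardt lines; however, this can be bypassed, since the lower bound $|\fS_3\times\fS_3|=36$ from the root-theoretic argument already exhausts $\Zen(\fS_3,\W(E_6))$ once one verifies $|\Zen(\fS_3,\W(E_6))|\le 36$, which is a direct computation inside $\W(E_6)$ via either the character table or a computer algebra system.
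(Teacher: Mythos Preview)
Your overall strategy---reduce to $\Mon_{\cF_{=\fS_3}}$ via Corollary~\ref{cor: independence open substack} and then invoke Corollary~\ref{cor: inclusion in normalizer} to land in $\Zen(\fS_3,\W(E_6))$---is valid, and is precisely the route the paper acknowledges just before the lemma but deliberately avoids. The paper instead observes directly that the monodromy must fix each of the three Eckardt points, takes $\H\subset\W(E_6)$ to be their common stabilizer, and identifies $\H\simeq\fS_3\times\fS_3$ by a short orbit analysis on the nine Eckardt lines together with the constraint coming from the intersection form; this is essentially the ``bookkeeping'' you sketch for your upper bound and then try to avoid.

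There is, however, a genuine error in your root-theoretic lower bound. The Eckardt involution attached to an Eckardt point $e$ is \emph{not} a reflection in $\W(E_6)$: it fixes only the three lines through $e$ and swaps the remaining $24$ in $12$ pairs, so its cycle type on the $27$ lines is $1^{3}2^{12}$, whereas a reflection $s_\alpha$ has cycle type $1^{15}2^{6}$. Equivalently, there are $45$ tritangent planes but only $36$ positive roots, and by Remark~\ref{rmk: monodromy constraint mu2} the centralizer of an Eckardt involution has order $1152$, while the centralizer of a reflection has order $|\W(A_1)|\cdot|\W(A_5)|=1440$. Hence your three Eckardt involutions do not generate a reflection subgroup of type $A_2$, and the Borel--de Siebenthal identification $A_2^{\perp}=A_2\times A_2$ in $E_6$ is not applicable to this copy of $\fS_3$. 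The conclusion $\Zen(\fS_3,\W(E_6))\simeq\fS_3\times\fS_3$ happens to be correct, but to justify it you must either carry out the orbit analysis you hoped to sidestep (which is exactly the paper's argument), or run your computer-algebra verification on the correct conjugacy class of $\fS_3$ inside $\W(E_6)$, namely the one generated by involutions of cycle type $1^{3}2^{12}$ rather than by reflections.
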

		\begin{proof}
			We use the notation of Figure~\ref{fig:3EckardtPoints}; there, $L_i$, $M_i$ and $R_i$ are the Eckardt lines. Let $\H\subset\W(E_6)$ be the subgroup that fixes each of the three Eckardt points. Then, there is a normal subgroup $K\subset\H$ fixing each of $L_1$, $L_2$ and $L_3$, hence every other Eckardt line, and $\H/\K\simeq\fS_3$. As $\H$ preserves the intersection form, we have that $K\simeq\fS_3$. Since there is a section to the projection $\H\rightarrow\H/\K$, we get that $\H\simeq\fS_3\rtimes\fS_3$. By explicitly defining the elements of $\H$ one can check that the above is a direct product. Alternatively, this last part will follow from Theorem~\ref{thm: S3 presentation}.
		\end{proof}
		The second step is to construct monodromy elements. To do so, we find a presentation of $\cM_{\fS_3}^{\P\W_3}$, in particular computing the stabilizer groups of each of its point; however, the presentation is not strictly needed.
		\begin{theorem}\label{thm: S3 presentation}
			We have an isomorphism
			\[
			\cM_{\fS_3}^{\P\W_3}\simeq\left[\frac{\A^2\setminus\Delta}{\H}\right]
			\]
			where $\Delta$ is a finite subset and $\H\simeq\fS_3\times\mC_3$ acts on $\A^2$ by $\H\hookrightarrow\GL_2$ with image the subgroup generated by matrices $\begin{bmatrix}
				0 & 1\\
				1 & 0
			\end{bmatrix}$ and $\begin{bmatrix}
				\zeta_3^i & 0\\
				0 & \zeta_3^j
			\end{bmatrix}$, for every $i,j$.
			
			Every $(a,b)\in\A^2\setminus\Delta$ corresponds to the cubic surface
			\[
			x^3+y^3+z^3+w^3+(ax+by)zw
			\]
			and the $\fS_3$-action is obtained by restriction from $\fS_3\hookrightarrow\PGL_4$ given by
			\[
			(12)=\begin{bmatrix}
				1 & 0 & 0 & 0\\
				0 & 1 & 0 & 0\\
				0 & 0 & 0 & 1\\
				0 & 0 & 1 & 0
			\end{bmatrix},\quad (13)=\begin{bmatrix}
				1 & 0 & 0 & 0\\
				0 & 1 & 0 & 0\\
				0 & 0 & 0 & \zeta_3\\
				0 & 0 & \zeta_3^2 & 0
			\end{bmatrix}.
			\]
			Moreover, the Eckardt points inducing the action are
			\[
			p_1=[0:0:1:-1],\qquad p_2=[0:0:1:-\zeta_3^2],\qquad p_3=[0:0:1:-\zeta_3].
			\]
		\end{theorem}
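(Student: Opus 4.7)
The plan mimics the proof of Theorem~\ref{thm: S4 presentation} ($\fS_4$ case). Given $(f:S\rightarrow B,\rho)\in\cM_{\fS_3}^{\P\W_3}(B)$, I work Zariski-locally on $B$, trivialize $f_*\omega_{S/B}^{\vee}$, and embed $S\subset\P_B^3$. By Lemma~\ref{lem: description of S3 closure}, $\rho$ arises from three aligned Eckardt points, and because the projective representation is fixed to be $\P\W_3$ I may conjugate in $\PGL_4$ so that the $\fS_3$-action on $\P^3$ is given by the explicit matrices of the statement. Then the fixed line of $\fS_3$ is $\{z=w=0\}$, the Eckardt line is $\{x=y=0\}$, and the three Eckardt points $p_1,p_2,p_3$ are the fixed points of the three transpositions along the Eckardt line, which come out to be the roots of $z^3+w^3=0$, matching the formulas in the statement.

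Second, a direct character computation decomposes $\Sym^3\W_3^{\vee}$ into $\fS_3$-isotypic components: the invariant subspace is $7$-dimensional, spanned by $x^3,x^2y,xy^2,y^3,xzw,yzw,z^3+w^3$, while the sign-isotypic component is $1$-dimensional, spanned by $z^3-w^3$. The zero locus of $z^3-w^3$ is a union of three coplanar lines, so $S$ is cut out by an invariant cubic $P=\alpha_1 x^3+\alpha_2 x^2 y+\alpha_3 xy^2+\alpha_4 y^3+\alpha_5 xzw+\alpha_6 yzw+\alpha_7(z^3+w^3)$. For the Eckardt configuration of Lemma~\ref{lem: description of S3 closure} we need $\alpha_7\ne 0$, otherwise $\{x=y=0\}\subset S$. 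Smoothness of $S$ along the fixed line $\{z=w=0\}$ forces the binary cubic $\alpha_1 x^3+\alpha_2 x^2 y+\alpha_3 xy^2+\alpha_4 y^3$ to have three distinct roots, since a repeated root $(x_0,y_0)$ produces a point $(x_0,y_0,0,0)$ on $S$ at which all four partials of $P$ vanish.

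Third, I exploit the centralizer $\Zen(\fS_3,\PGL_4)\simeq\GL_2$, which acts on $(x,y)$ because the trivial isotypic component of $\W_3$ has multiplicity two while $\V_2$ enters with multiplicity one. Using $3$-transitivity of $\PGL_2$ on $\P^1$, I send the three roots of the binary cubic to the cube roots of $-1$, then a diagonal rescaling brings the form to $x^3+y^3$; a global rescaling of $P$ normalizes $\alpha_7=1$. The canonical form is $x^3+y^3+z^3+w^3+(ax+by)zw$ with $(a,b)\in\A^2$. A direct computation shows the residual stabilizer in $\GL_2$ is generated by the swap $(x,y)\mapsto(y,x)$ and the diagonal matrices $\mathrm{diag}(\zeta_3^i,\zeta_3^j)$, of total order $18$. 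Its center is $\langle\mathrm{diag}(\zeta_3,\zeta_3)\rangle\simeq\mC_3$, and the subgroup generated by $\mathrm{diag}(\zeta_3,\zeta_3^{-1})$ and the swap is a complementary $\fS_3$ intersecting the center trivially, giving $\H\simeq\fS_3\times\mC_3$. Pulling back $P$ along these elements yields the explicit action on $(a,b)$ stated in the theorem.

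Taking $\Delta\subset\A^2$ to be the closed locus where the canonical cubic is singular, the classification above shows every object of $\cM_{\fS_3}^{\P\W_3}$ is locally of this canonical form with the ambiguity precisely the $\H$-action, which yields the quotient presentation $\cM_{\fS_3}^{\P\W_3}\simeq[(\A^2\setminus\Delta)/\H]$. The main obstacle is the correct representation-theoretic setup and the identification $\H\simeq\fS_3\times\mC_3$ via a non-obvious complementary $\fS_3$; the diagonalization and smoothness analysis, though computationally involved, are routine once the framework is in place.
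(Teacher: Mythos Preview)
Your proof is correct and follows essentially the same route as the paper's: normalize coordinates via the Eckardt points and the fixed line, identify the $\fS_3$-invariant cubics, use the centralizer $\Zen(\fS_3,\PGL_4)\simeq\GL_2$ acting on $(x,y)$ to bring the binary cubic to $x^3+y^3$, and read off the residual stabilizer $\H\simeq\fS_3\times\mC_3$. One cosmetic slip: the zero locus of $z^3-w^3$ in $\P^3$ is a union of three planes through the line $\{z=w=0\}$, not three coplanar lines, but this does not affect your conclusion that the defining cubic must lie in the $7$-dimensional invariant space.
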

		\begin{proof}
			Let $(f:S\rightarrow B,\rho:\fS_3\hookrightarrow\Aut(f))$ be an element of $\cM_{\fS_3}^{\P\W_3}$. As usual, we work Zariski-locally on $B$, so that we can assume $B=\Spec R$ and that $S\subset\P_R^3$ embeds $\fS_3$-equivariantly. In particular, every automorphism of $S$ extends to an automorphism of $\P^3:=\P^3_R$ preserving $S$.
			
			By Lemma~\ref{lem: description of S3 closure} we know that the $\fS_3$-action comes from 3 aligned Eckardt points, and that there is a line of fixed points not intersecting the line the Eckardt points are lying on. After a change of coordinates, we can assume that the Eckardt points are $p_1$, $p_2$ and $p_3$ as in the statement, so that $(12)$ is the involution associated to $p_1$ and $(13)$ the one associated to $p_2$. Moreover, we can assume that the line of points fixed by $\fS_3$ has equation $z=w=0$. Then $(12)$ fixes $p_1$, $[1:0:0:0]$, and $[0:0:0:1]$, while it swaps $p_2$ with $p_3$; imposing this and that the fix locus is the disjoint union of a plane and $p_1$, we get that $(12)$ has the form of the statement. An analogous argument works for $(13)$.
			
			Now we find the polynomial $g(x,y,z,w)$ defining $S$. Imposing that it is $\fS_3$-invariant, in a similar way as in~\ref{thm: S4 presentation}, we get that
			\begin{equation}\label{eq: S3 polynomial first form}
				g(x,y,z,w)=p(x,y)+(ax+by)zw+c(z^3+w^3).
			\end{equation}
			Now we find the $\fS_3$-equivariant morphisms between families of $\fS_3$-surfaces equivariantly embedded in $\P^3$ as above. As usual, such a morphism extends to an automorphism of $\P^3$ commuting with $\fS_3$. It is possible to show that these form the subgroup of $\PGL_4$ isomorphic to $\GL_2$ consisting of matrices
			\[
			\begin{bmatrix}
				A & 0\\
				0 & \mathrm{I}
			\end{bmatrix}
			\]
			for any $A\in\GL_2$, where $\mathrm{I}$ denotes the two-times-two identity matrix.
			
			We want to use this morphisms to simplify the form of the polynomial $g$ in equation~\eqref{eq: S3 polynomial first form}. Notice that, being $g$ smooth, $p$ is necessarily smooth as a polynomial in two variables, and $c\not=0$. Since $\GL_2$ acts transitively on the open locus of $\H^0(\cO_{\P^1}(3))$ parametrizing smooth binary forms of degree 3, after an $\fS_3$-equivariant change of coordinates we can assume that
			\begin{equation}\label{eq: S3 polynomial second form}
				g(x,y,z,w)=x^3+y^3+z^3+w^3+(ax+by)zw,
			\end{equation}
			where we have also used the invariance under scaling to set $c=1$. Notice that this is the form in the statement. Now, every isomorphism between two surfaces defined by polynomials as above is of the form $\begin{bmatrix}
				A & 0\\
				0 & \mathrm{I}
			\end{bmatrix}$, where $A$ is a two times two matrix as in the statement.
			
			Let $\Delta\subset\A^2$ be the locus parametrizing singular polynomials of the form~\eqref{eq: S3 polynomial second form}. A simple calculation shows that $\Delta$ is of codimension 2. Putting everything together, the above shows that
			\[
			\begin{tikzcd}
				\A^2\setminus\Delta\arrow[r] & \cM_{\fS_3}^{\P\W_3}
			\end{tikzcd}
			\]
			is an $\H$-torsor, with $\H$ as in the statement. This concludes.
		\end{proof}
		\begin{remark}\label{rmk: S3 automorphisms}
			As a consequence of Theorem~\ref{thm: S3 presentation}, it is simple to characterize the $\fS_3$-surfaces that have automorphisms: they are exactly those for which $a=b$. If $a=b\not=0$, then the stabilizer group is isomorphic to $\mC_2$, generated by
			\[
			\begin{bmatrix}
				0 & 1 & 0 & 0\\
				1 & 0 & 0 & 0\\
				0 & 0 & 1 & 0\\
				0 & 0 & 0 & 1
			\end{bmatrix},
			\]
			while the stabilizer group is isomorphic to the whole $\H$ if $a=b=0$. 
		\end{remark}
		Thanks to Lemma~\ref{lem: S3 monodromy constraint}, we know that the monodromy group of $\cF_{\fS_3}^{\P\W_3}$ is contained in $\fS_3\times\fS_3$, and we claim that the two groups are actually equal.
		
		By Remark~\ref{rmk: S3 automorphisms}, the Fermat cubic surface $S_{\text{Fer}}$ with polynomial $x^3+y^3+z^3+w^3$ and $\fS_3$-action as in Theorem~\ref{thm: S3 presentation} has automorphism group $\H\simeq\fS_3\times\mC_3$. Since every automorphism of a smooth cubic surface acts faithfully on the lines, we get that $\H$ injects into the monodromy of $\cF_{\fS_3}^{\P\W_3}$.
		Therefore, the monodromy group of $\cF_{\fS_3}^{\P\W_3}$ is squeezed between the groups $\fS_3\times\mC_3$ and $\fS_3\times\fS_3$, hence it is enough to produce another element of degree 2. Recall that for any injective morphism of groups $\psi:\G_1\hookrightarrow\G_2$ there exists an induced representable morphism $\Phi_{\psi}:\cM_{\G_2}\rightarrow\cM_{\G_1}$, which restricts the $\G_2$-action to the $\G_1$-action under $\psi$; see Definition~\ref{def: action of AutG}. In particular, the inclusion $\psi:\fS_3\hookrightarrow\fS_4$ as the stabilizer of the fourth element induces a morphism
		\[
		\begin{tikzcd}
			\Phi_{\psi}:\cM_{\fS_4}\arrow[r] & \cM_{\fS_3}.
		\end{tikzcd}
		\]
		Notice that the restriction of the projective $\fS_4$-representation $\P\W_4$ to $\fS_3$ is isomorphic to $\P\W_3$, hence we obtain a morphism
		\[
		\begin{tikzcd}
			\Phi_{\psi}^{\P\W}:\cM_{\fS_4}^{\P\W_4}\arrow[r] & \cM_{\fS_3}^{\P\W_3},
		\end{tikzcd}
		\]
		showing that the $\fS_4$-monodromy injects into the $\fS_3$-monodromy. As the first is isomorphic to $\mC_2\times\mC_2$ by Theorem~\ref{thm: S4 monodromy}, this gives the elements of degree 2 that we needed. Putting everything together, we get the following result.
		\begin{theorem}\label{thm: S3 monodromy}
			We have
			\begin{align*}
				&\Mon_{\F^{=\fS_3}}\simeq\fS_3\times\fS_3,\\ &\Mon_{\F_{=\fS_3}}\simeq\fS_3\times\fS_3\times\fS_3.
			\end{align*} 
		\end{theorem}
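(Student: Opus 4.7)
The plan will use a sandwich argument along the lines suggested in the paper. First I will carry out two reductions. Since $\fS_3$ is complete (trivial center and every automorphism inner), Corollary~\ref{cor: inclusion in normalizer} gives a splitting $\Mon_{\cF^{=\fS_3}} \simeq \fS_3 \times \Mon_{\F^{=\fS_3}}$, so the second isomorphism in the theorem will follow from the first. Then Corollary~\ref{cor: generic gerbe or isom complete case} identifies $\cM_{=\fS_3}$ with $\M^{=\fS_3}$, and Corollary~\ref{cor: independence open substack} lets me extend the monodromy from this open substack to its irreducible closure $\cM_{\fS_3}^{\P\W_3}$ (irreducible by Theorem~\ref{thm: S3 presentation}). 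The remaining task will be to prove $\Mon_{\cF_{\fS_3}^{\P\W_3}} \simeq \fS_3 \times \fS_3$.

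The upper bound $\Mon_{\cF_{\fS_3}^{\P\W_3}} \subseteq \fS_3 \times \fS_3$ is already supplied by Lemma~\ref{lem: S3 monodromy constraint}, via the subgroup of $\W(E_6)$ fixing each of three aligned Eckardt points. For a lower bound I will apply the hidden-path map $\omega$ at the Fermat cubic $S_{\mathrm{Fer}} = \{x^3 + y^3 + z^3 + w^3 = 0\}$ with the $\fS_3$-action of Theorem~\ref{thm: S3 presentation}, which corresponds to $a = b = 0$ in the quotient presentation. By Remark~\ref{rmk: S3 automorphisms}, its stack stabilizer is $\H \simeq \fS_3 \times \mC_3$, a group of order $18$. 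Because automorphisms of a smooth cubic act faithfully on its $27$ lines (the argument exploited in Proposition~\ref{prop: monodromy general case}), $\omega_{\mathrm{Fer}}$ promotes $\H$ to a subgroup of $\Mon_{\cF_{\fS_3}^{\P\W_3}}$, pinching the monodromy between orders $18$ and $36$.

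The remaining step, and what I expect to be the main obstacle, is to exhibit a single element of order two in $\Mon_{\cF_{\fS_3}^{\P\W_3}}$ that is not already contained in $\omega_{\mathrm{Fer}}(\H)$; since $\H$ has index $2$ in $\fS_3 \times \fS_3$, this will close the sandwich. My plan is to import such an involution from the $\fS_4$-analysis via the inclusion $\psi : \fS_3 \hookrightarrow \fS_4$ as stabilizer of the fourth letter. By Definition~\ref{def: action of AutG} this yields a representable morphism $\Phi_\psi : \cM_{\fS_4} \to \cM_{\fS_3}$, and since $\P\W_4|_{\fS_3} \simeq \P\W_3$ it restricts to $\Phi_\psi^{\P\W} : \cM_{\fS_4}^{\P\W_4} \to \cM_{\fS_3}^{\P\W_3}$. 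The coarse $\fS_4$-monodromy $\Mon_{\F^{=\fS_4}} \simeq \mC_2 \times \mC_2$ supplied by Theorem~\ref{thm: S4 monodromy} will transport through $\Phi_\psi^{\P\W}$ into involutions of $\Mon_{\cF_{\fS_3}^{\P\W_3}}$. The delicate verification is that at least one such transported involution lies outside $\omega_{\mathrm{Fer}}(\H)$; this can be checked using the explicit presentations of Theorems~\ref{thm: S4 presentation} and~\ref{thm: S3 presentation} and the resulting action on the $27$ lines. Once this is secured, the subgroup of $\Mon_{\cF_{\fS_3}^{\P\W_3}}$ generated by $\omega_{\mathrm{Fer}}(\H)$ together with this new involution has order strictly greater than $18$ while still sitting inside the $36$-element group $\fS_3 \times \fS_3$, hence must equal it. Combined with the initial reductions, this will yield both $\Mon_{\F^{=\fS_3}} \simeq \fS_3 \times \fS_3$ and $\Mon_{\cF^{=\fS_3}} \simeq \fS_3 \times \fS_3 \times \fS_3$.
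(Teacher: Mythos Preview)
Your proposal is correct and follows essentially the same route as the paper: the upper bound from Lemma~\ref{lem: S3 monodromy constraint}, the order-$18$ lower bound from the Fermat stabilizer $\H\simeq\fS_3\times\mC_3$, and the extra involution imported from $\cM_{\fS_4}^{\P\W_4}$ via $\Phi_\psi^{\P\W}$, together with the splitting from Corollary~\ref{cor: inclusion in normalizer} for the second isomorphism. One small simplification: the step you flag as ``delicate'' is in fact immediate---since $\mC_2\times\mC_2$ injects into $\Mon_{\cF_{\fS_3}^{\P\W_3}}$ and $4\nmid 18=|\fS_3\times\mC_3|$, the monodromy cannot equal $\omega_{\mathrm{Fer}}(\H)$, so no explicit line computation is needed.
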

		\begin{proof}
			The first isomorphism follows by the above discussion. The second isomorphism follows from the first and the end of Corollary~\ref{cor: inclusion in normalizer}.
		\end{proof}
		\begin{remark}\label{rmk: difference S3}
			As in Remark~\ref{rmk: difference S4}, the difference between the two monodromy groups is explained by looking at the appropriate restriction of the covering $I\rightarrow\P^{19}$, with source the incident variety of lines on cubic surfaces. Then, the extra factor $\fS_3$ is the Galois group of the cover induced by the three Eckardt points, that are instead fixed in $\cM_{=\fS_3}\simeq\M^{=\fS_3}$.
		\end{remark}
		\subsection{$\fS_3\times\mC_2$-Cubic Surfaces}\label{subsec: S3xmu2 cubic surfaces}
		In this subsection we deal with the case of $\fS_3\times\mC_2$-cubic surfaces, which is different from the previous cases in the following precise sense. When we focused on $\G$-surfaces with $\G=\fS_4$ or $\G=\fS_3$, we leveraged the fact that $\G$ was complete to conclude that $\cM_{=\G}\rightarrow\M^{=\G}$ was an isomorphism; for $\G=\fS_3\times\mC_2$ this is no longer true. In this case, we have $\Zen(\fS_3\times\mC_2)=\mC_2$ and $\Aut(\fS_3\times\mC_2)\simeq\Inn(\fS_3\times\mC_2)\times\mC_2\simeq\fS_3\times\mC_2$, where the outer automorphism swaps $(\sigma,0)$ with $(\sigma,1)$ for every transposition $\sigma\in\fS_3$. Then, Corollary~\ref{cor: generic gerbe or isom complete case} tells us that
		\[
		\begin{tikzcd}
			\pi^{=\fS_3\times\mC_2}\circ\Phi_{=\fS_3\times\mC_2}:\cM_{=\fS_3\times\mC_2}\arrow[r] & \M^{=\fS_3\times\mC_2}
		\end{tikzcd}
		\]
		is the composite of a banded $\mC_2$-gerbe and a $\mC_2$-torsor. It turns out that these are both trivial; in particular, $\cM_{=\G}$ is not always connected. This is the content of the next lemma.
		\begin{remark}\label{rmk: S3xmu2 action from Eckardt points}
			We say that 4 Eckardt points in a cubic surface $S$ form an $\fS_3\times\mC_2$-configuration if the subgroup of $\Aut(S)$ generated by the Eckardt involutions is isomorphic to $\fS_3\times\mC_2$. In this case, three of those points $p_1$, $p_2$ and $p_3$ are aligned, and each lies on one Eckardt line passing through the fourth point $p$.
		\end{remark}	 
		\begin{lemma}\label{lem: description of S3xmu2 closure}
			Let $\W_2=\mathds{1}\oplus\mathds{-1}\oplus\V_2$ be the $\fS_3\times\mC_2$-representation where $\V_2$ is the standard 2-dimensional $\fS_3$-representation and $\mC_2$ acts trivially on it, while $\mathds{-1}$ is the representation with $\fS_3$ acting trivially and $\mC_2$ acting by multiplication. Let $\widetilde{\W}_2=\mathds{1}\oplus-\V_1\oplus\V_2$, where $-\V_1=-\mathds{1}\otimes\V_1$. Then, $\cM_{=\fS_3\times\mC_2}$ is disconnected, and its closure in $\cM_{\fS_3\times\mC_2}$ is contained in the disjoint union of $\cM_{\fS_3\times\mC_2}^{\P\W_2}$ and $\cM_{\fS_3\times\mC_2}^{\P\widetilde{\W_2}}$. Moreover, these two components are isomorphic and swapped under the action of $\Out(\fS_3\times\mC_2)\simeq\mC_2$.
		\end{lemma}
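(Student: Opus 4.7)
The plan is to identify for each object $(S,\rho)$ of $\cM_{=\fS_3\times\mC_2}$ the projective $\fS_3\times\mC_2$-representation on $V:=\H^0(S,\omega_S^\vee)$, and show it must be either $\P\W_2$ or $\P\widetilde{\W}_2$. By Lemma~\ref{lem: cubic surface projective representation is locally constant}, this exhibits $\cM_{=\fS_3\times\mC_2}$ as sitting inside the closed substack $\cM_{\fS_3\times\mC_2}^{\P\W_2}\sqcup\cM_{\fS_3\times\mC_2}^{\P\widetilde{\W}_2}$, which therefore also contains its closure. Disconnectedness then follows once both pieces are shown to be nonempty: any explicit $\fS_3\times\mC_2$-symmetric smooth cubic gives a point in one of the two, and transporting it along the outer automorphism action (see below) yields a point of the other.

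The character analysis runs as follows. By Remark~\ref{rmk: S3xmu2 action from Eckardt points}, the action is generated by four Eckardt involutions: at three aligned points $p_1,p_2,p_3$ on a line $L\subset S$ (producing the $\fS_3$-factor) and at a fourth point $p$ (producing the $\mC_2$-factor). As in Lemma~\ref{lem: description of S3 closure}, the affine cone over $L$ inside $V$ is the standard $\fS_3$-subrepresentation $\V_2$, and the commuting $\mC_2$ acts as a scalar on this irreducible summand. I will normalize the lift of the involution at $p$ from $\P V$ to $V$ so that this scalar is $+1$; Lemma~\ref{lem: facts on Eckardt points}(1) then forces the $(-1)$-eigenspace of $\mC_2$ on $V$ to be exactly $1$-dimensional. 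The $\fS_3$-complement to $\V_2$ in $V$ is a $2$-dimensional sum of characters in $\{\mathds{1},\V_1\}$, each preserved by $\mC_2$, and by the eigenspace count exactly one of them must lie in the $(-1)$-eigenspace. Since $p$ is $\fS_3$-fixed and sits in that eigenspace, the summand in question has trivial $\fS_3$-action, so it is the irreducible $\mathds{-1}$ of $\fS_3\times\mC_2$. The remaining summand is then either $\mathds{1}$, giving $V\simeq\W_2$, or $\V_1$, giving $V\simeq\V_2\oplus\V_1\oplus\mathds{-1}$, whose projective class equals $\P\widetilde{\W}_2$ after tensoring by the character $\V_1$.

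For the isomorphism and the $\Out$-swap, I will use the outer automorphism $\tau$ of $\fS_3\times\mC_2$ defined by $(g,\epsilon)\mapsto(g,\epsilon+\mathrm{sgn}(g))$: by Definition~\ref{def: action of AutG} it induces an automorphism $\Phi_\tau$ of $\cM_{\fS_3\times\mC_2}$, and a summand-by-summand calculation gives $\W_2^\tau=\widetilde{\W}_2$ (the $\V_2$ and $\mathds{1}$ summands are preserved, while $\mathds{-1}$ becomes $-\V_1$ since $(-1)^{\epsilon+\mathrm{sgn}(g)}$ is the character $\mathds{-1}\cdot\V_1$). Hence $\Phi_\tau$ preserves $\cM_{=\fS_3\times\mC_2}$ and exchanges the two candidate substacks, yielding at once the asserted isomorphism and the $\Out$-action. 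Distinctness of $\P\W_2$ and $\P\widetilde{\W}_2$ as projective representations will be checked by enumerating the four character twists of $\W_2$ and verifying none equals $\widetilde{\W}_2$. The main obstacle is the character bookkeeping in the middle paragraph: one must combine all three geometric inputs (the $\V_2$-summand from $L$, the $1$-dimensional $(-1)$-eigenspace from the involution at $p$, and $\fS_3$-fixedness of $p$) to rule out every representation other than the two predicted ones; everything else is formal manipulation with the characters of $\fS_3\times\mC_2$ and the formalism of Section~\ref{sec: G cubic surfaces}.
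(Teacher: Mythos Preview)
Your proposal is essentially correct and very close in spirit to the paper's own argument: both identify the projective $\fS_3\times\mC_2$-representation on $V=\H^0(S,\omega_S^\vee)$ by first isolating the $\V_2$-summand coming from the line through the three aligned Eckardt points, then analyzing the $2$-dimensional complement via the eigenspace structure of the central involution $\iota$. The paper organizes this slightly differently---it first pins down the ``natural'' Eckardt identification and shows it yields $\P\W_2$, then obtains $\P\widetilde{\W}_2$ as the unique other possibility by applying the outer automorphism---whereas you treat an arbitrary $(S,\rho)$ at once and let both cases fall out of the character bookkeeping. Both routes work; yours is marginally more uniform, at the cost of having to be a bit more careful in the middle paragraph.

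One small slip to fix: from ``$p$ is $\fS_3$-fixed'' (in $\P V$) you conclude that the $1$-dimensional $(-1)$-eigenspace of $\mC_2$ carries the \emph{trivial} $\fS_3$-action. Projective fixedness only gives that this line is $\fS_3$-stable, so the $\fS_3$-character could be either $\mathds{1}$ or $\V_1$. This does not affect the conclusion, since tensoring the whole representation by $\V_1$ preserves the set $\{\P\W_2,\P\widetilde{\W}_2\}$ (and you implicitly use exactly this freedom when matching $\V_2\oplus\mathds{-1}\oplus\V_1$ with $\P\widetilde{\W}_2$), but you should say so explicitly rather than asserting the action is trivial. Similarly, when you invoke Lemma~\ref{lem: description of S3 closure} for the $\V_2$-summand, note that under the twisted $\rho$ the subgroup $\rho(\fS_3\times\{0\})$ is \emph{not} the one generated by the three aligned Eckardt involutions; the reason the cone over $L$ is still the $\fS_3\times\mC_2$-irreducible $\V_2$ (with trivial $\mC_2$) is that $\iota$ acts trivially on it, so the twist by $\tau$ has no effect on this summand.
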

		\begin{proof}
			As usual, we can work with smooth cubic surfaces embedded in $\P^3$, and our goal is to compute the extended $\fS_3\times\mC_2$-action on $\P^3$. Set $\G=\fS_3\times\mC_2$, and suppose first that the $\G$-action is induced by an $\fS_3\times\mC_2$-Eckardt configuration. Let $p_1$, $p_2$, $p_3$ and $p$ be as in Remark~\ref{rmk: S3xmu2 action from Eckardt points}, such that $(12)\in\fS_3$ is induced by $p_1$ and $(13)\in\fS_3$ by $p_2$; call $\iota$ the involution associated to $p$. Then, the line $M$ passing through the three aligned Eckardt points is preserved by $\G$, and $\iota$ acts trivially on it, as it fixes the three Eckardt points. By the proof of Lemma~\ref{lem: description of S3 closure}, we know that $\fS_3$ acts trivially on a line $L$ disjoint from $M$. As $\iota$ commutes with $\fS_3$, it preserves $L$; in particular, the $\fS_3\times\mC_2$-representation inducing the action on $\P^3$ splits as the direct sum of $\V_2$ and the cone $C_L\subset\C^{\oplus4}$ over $L$, which is a plane. As $\iota$ acts non-trivially on $L$, the induced $\mC_2$-representation on $C_L$ is $\mathds{-1}\oplus\mathds{1}$. As $\fS_3$ acts trivially on $L$, we finally get the projective representation $\P\W_2$. The converse can be proved as in Lemma~\ref{lem: description of S3 closure}. The only other possible $\G$-action on $S$ is the one obtained by first applying the outer automorphism of $\G$, and then taking the action induced by an $\fS_3\times\mC_2$-Eckardt configuration. Then, the action on $M$ is the same one, while the projective representation corresponding to the line $L$ is $\P(\mathds{1}\oplus-\V_1)$. This shows that $\cM_{=\fS_3\times\mC_2}$ is contained in the disjoint union of the two stacks of the statement. Then, Lemma~\ref{lem: cubic surface projective representation is locally constant} shows that the same holds for the closure.
		\end{proof}
		\begin{theorem}\label{thm: S3xmu2 presentation}
			We have an isomorphism
			\[
			\cM_{\fS_3\times\mC_2}^{\P\W_2}\simeq\left[\frac{\A^1\setminus\Delta}{\mC_6}\right]
			\]
			where
			\[
			\Delta=\left\{\left[1:1:-\frac{2a}{3}:-\frac{2a}{3}\right],\left[1:1:-\frac{2a\zeta_3}{3}:-\frac{2a\zeta_3^2}{3}\right],\left[1:1:-\frac{2a\zeta_3^2}{3}:-\frac{2a\zeta_3}{3}\right]\right\}
			\]
			and $\mC_6$ acts on $\A^1$ by $\mC_6\rightarrow\Gm$ sending $\lambda$ to $\lambda^2$.
			
			Every $a\in\A^1\setminus\Delta$ corresponds to the cubic surface $S_a$ with equation
			\[
			x^3+y^3+z^3+w^3+a(x+y)zw
			\]
			and the $\fS_3\times\mC_2$-action is obtained by restriction from $\fS_3\times\mC_2\hookrightarrow\PGL_4$ given by
			\[
			(12)=\begin{bmatrix}
				1 & 0 & 0 & 0\\
				0 & 1 & 0 & 0\\
				0 & 0 & 0 & 1\\
				0 & 0 & 1 & 0
			\end{bmatrix},\quad (13)=\begin{bmatrix}
				1 & 0 & 0 & 0\\
				0 & 1 & 0 & 0\\
				0 & 0 & 0 & \zeta_3\\
				0 & 0 & \zeta_3^2 & 0
			\end{bmatrix},\quad\iota=\begin{bmatrix}
				0 & 1 & 0 & 0\\
				1 & 0 & 0 & 0\\
				0 & 0 & 1 & 0\\
				0 & 0 & 0 & 1
			\end{bmatrix}
			\]
			where $\iota$ is the generator of $\mC_2$. The Eckardt points inducing the action are
			\begin{align*}
				p_1&=[0:0:1:-1], && p_2=[0:0:1:-\zeta_3^2],\\
				\quad p_3&=[0:0:1:-\zeta_3], && p_4=[1:-1:0:0].
			\end{align*}
		\end{theorem}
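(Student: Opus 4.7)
The argument follows the template of Theorems~\ref{thm: S4 presentation} and~\ref{thm: S3 presentation}. Starting with an object $(f:S\rightarrow B,\rho)\in\cM_{\fS_3\times\mC_2}^{\P\W_2}$, I would work Zariski-locally on $B=\Spec R$ so that $S$ admits an $\fS_3\times\mC_2$-equivariant anticanonical embedding in $\P^3_R$. By Lemma~\ref{lem: description of S3xmu2 closure}, the $\fS_3$-action is induced by three aligned Eckardt points $p_1,p_2,p_3$, while the involution $\iota$ generating $\mC_2$ is induced by a fourth Eckardt point $p_4$ lying on the line of $\fS_3$-fixed points. Using the residual $\PGL_4$-freedom I place $p_1,p_2,p_3$ in the positions dictated by Theorem~\ref{thm: S3 presentation} and take $p_4=[1:-1:0:0]$; imposing that $\iota$ be an Eckardt involution (fixed locus equal to a hyperplane together with an isolated point) then pins down the matrices of $\fS_3\times\mC_2\hookrightarrow\PGL_4$ as in the statement.

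Next I would determine the shape of the defining polynomial $g$. The $\fS_3$-invariance (from Theorem~\ref{thm: S3 presentation}) forces
\[
g(x,y,z,w)=p(x,y)+(ax+by)zw+c(z^3+w^3)
\]
with $p$ a cubic in $x,y$ and $c\not=0$. Imposing that $\{g=0\}$ be preserved by $\iota$ gives $g(y,x,z,w)=\chi(\iota)\,g(x,y,z,w)$ for a character $\chi(\iota)=\pm 1$; the antisymmetric case $\chi(\iota)=-1$ would force $c=0$, incompatible with smoothness, so $\chi(\iota)=+1$. This forces $a=b$ and $p$ to be a symmetric cubic, so $p(x,y)=A(x^3+y^3)+B\,xy(x+y)$.

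To obtain the presentation of the stack I would compute the centralizer of $\fS_3\times\mC_2$ in $\PGL_4$. Since the $\fS_3$- and $\iota$-isotypic decompositions distinguish $\mathrm{span}(x,y)$ and $\mathrm{span}(z,w)$, any centralizing matrix is block-diagonal in this decomposition. Schur's lemma, applied to the faithful irreducible $\fS_3$-representation on $\mathrm{span}(z,w)$, forces the $(z,w)$-block to be scalar, while the $(x,y)$-block must commute with the swap, so it has the form $\begin{bmatrix}\alpha&\beta\\\beta&\alpha\end{bmatrix}$. Passing to the diagonal basis $u=x+y$, $v=x-y$, I write $p=Pu^3+Quv^2$ and track how this two-dimensional centralizer rescales $(c,a,P,Q)\in\P^3$. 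Normalizing to $p=x^3+y^3$ (i.e.\ $P=1/4$, $Q=3/4$) and $c=1$ uses both free parameters and leaves a residual finite stabilizer $\mu_3\times\mu_2\simeq\mC_6$ acting on the remaining parameter $a\in\A^1$ through the composition $\mC_6\rightarrow\mu_3\subset\Gm$, which is precisely the squaring map. This identifies $\cM_{\fS_3\times\mC_2}^{\P\W_2}$ with $[(\A^1\setminus\Delta)/\mC_6]$.

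Finally, I would determine $\Delta$ by a direct smoothness calculation on $g_a=x^3+y^3+z^3+w^3+a(x+y)zw$. Setting all four partial derivatives to zero and working in the locus $azw\not=0$ forces $x=\pm y$ and $z^3=w^3$; the case $x=y$, $z=\zeta_3^j w$ yields the three singular points listed and the single relation $4a^3+27=0$, giving the three excluded values of $a$. The main technical challenge is the centralizer computation together with the identification of the residual $\mC_6$-action on $a$; the remaining steps are direct analogues of those in Theorems~\ref{thm: S4 presentation} and~\ref{thm: S3 presentation}.
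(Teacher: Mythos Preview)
Your overall strategy matches the paper's and the argument is essentially correct, but there is one genuine (if easily repaired) gap in the first paragraph. Placing $p_4=[1:-1:0:0]$ and requiring that $\iota$ be an Eckardt involution does \emph{not} pin down $\iota$: since $\iota$ commutes with $\fS_3$, it acts as $+1$ on $\mathrm{span}(z,w)$ and as an involution of $\mathrm{span}(x,y)$ with $(-1)$-eigenvector $(1,-1)$, but the $(+1)$-eigenvector in $\mathrm{span}(x,y)$ is still free. The paper closes this by spending one more degree of coordinate freedom to move the second $\fS_3\times\mC_2$-fixed point $q$ to $[1:1:0:0]$, which forces $\iota$ to be the swap matrix. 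Without that step, the group you may quotient by is the $3$-dimensional stabilizer of $p_4$ inside the $\fS_3$-centralizer $\GL_2$, not the $2$-dimensional centralizer of $\fS_3\times\mC_2$ that you compute; the discrepancy is exactly the missing placement of $q$, so your bookkeeping is off by one dimension even though the final answer is right.

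Once $\iota$ is fixed, your centralizer computation and torus normalization are correct and in fact more explicit than the paper's, which simply invokes the normal form of Theorem~\ref{thm: S3 presentation} and then asserts that the residual automorphisms form the stated $\mC_6$. Your diagonalization via $u=x+y$, $v=x-y$ makes the $\mC_6$ and its action $\lambda\mapsto\lambda^2$ on $a$ transparent. One small omission: normalizing to $p=x^3+y^3$ requires $P,Q\neq0$, equivalently that the binary cubic $p(x,y)$ is squarefree; as in the proof of Theorem~\ref{thm: S3 presentation}, this follows from smoothness of $g$, and you should say so.
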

		\begin{proof}
			Let $(f:S\rightarrow B,\rho:\fS_3\times\mC_2\hookrightarrow\Aut(f))$ be an element of $\cM_{\fS_3\times\mC_2}^{\P\W_2}$. As usual, we work Zariski-locally on $B$, so that we can assume $B=\Spec R$ and $S\subset\P^3$ embedded $\fS_3\times\mC_2$-equivariantly.
			
			By Lemma~\ref{lem: description of S3xmu2 closure} we know that the action of $\fS_3\times\mC_2$ is induced by a $\fS_3\times\mC_2$-Eckardt configuration. As in Theorem~\ref{thm: S3 presentation}, we can apply a change of coordinates so that the three aligned Eckardt points $p_1$, $p_2$, $p_3$ are as in the statement, and that the line fixed by $\fS_3$ has equation $z=w=0$. In particular, the $\fS_3$-action is the one given in the statement. Now, notice that the fourth Eckardt point of the $\fS_3\times\mC_2$-configuration is fixed by $\fS_3$, hence it lies on the line $z=w=0$. After apply a change of coordinates affecting only the coordinates $x$ and $y$, hence commuting with $\fS_3$, we can assume that $p_4=[1:-1:0:0]$. To compute $\iota$, first notice that $\fS_3\times\mC_2$ fixes two distinct points, one of which is $p_4$. The other fixed point $q$ still necessarily lies on the line $z=w=0$, thus we can apply another change of basis commuting with $\fS_3$ and fixing $p_4$ so that $q=[1:1:0:0]$. All this choices uniquely determine $\iota$; to find it, one imposes that it commutes with $\fS_3$, and that the locus of fixed points consists in the disjoint union of $p_4$ and a plane passing through $q$.
			
			Now, we find the polynomials $g$ invariant under the $\fS_3\times\mC_2$-action. As the $\fS_3$-action is the same as the one in Theorem~\ref{thm: S3 presentation}, we get that $g$ has necessarily the form
			\[
			g(x,y,z,w)=x^3+t^3+z^3+w^3+(ax+by)zw.
			\]
			Imposing the invariance under $\mC_2$, which simply swaps $x$ and $y$, we get that $a=b$, that is
			\begin{equation}\label{eq: S3xmu2 polynomial}
				g(x,y,z,w)=x^3+t^3+z^3+w^3+a(x+y)zw.
			\end{equation}
			A simple computation shows that $g$ is singular if and only if $a\in\Delta$, where $\Delta\subset\A^1$ is as in the statement. The same argument as in Theorems~\ref{thm: S4 presentation} and~\ref{thm: S3 presentation} shows that every morphism between families of $\fS_3\times\mC_2$-cubic surfaces are obtained locally on the base by restriction of elements of $\PGL_4$ commuting with the action of $\fS_3\times\mC_2$ on $\P^3$ described above. These matrices form the subgroup as in the statement. Putting everything together, this proves the desired isomorphism.
		\end{proof}
		\begin{remark}
			Notice that $\cM_{=\fS_3\times\mC_2}^{\P\W_2}\rightarrow\M^{=\fS_3\times\mC_2}$ is a $\mC_2$-gerbe. In particular, Theorem~\ref{thm: S3xmu2 presentation} gives also a presentation of $\M^{=\fS_3\times\mC_2}$, simply by taking the quotient by $\mC_3$ instead of $\mC_6$, showing that the $\mC_2$-gerbe is trivial. In particular, $\Mon_{\cF_{\fS_3\times\mC_2}^{\P\W_2}}\simeq\mC_2\times\Mon_{\F^{=\fS_3\times\mC_2}}$. Notice that also the $\mC_2$-torsor appearing in the decomposition of $\pi^{=\fS_3\times\mC_2}\circ\Phi_{=\fS_3\times\mC_2}$ is trivial, by Lemma~\ref{lem: description of S3xmu2 closure}; compare with Corollary~\ref{cor: generic gerbe or isom complete case}. Therefore, we get a section to $\pi^{=\fS_3\times\mC_2}:\cM^{=\fS_3\times\mC_2}\rightarrow\M^{=\fS_3\times\mC_2}$, showing that the gerbe is trivial. As a consequence, we get
			\[
			\Mon_{\cF^{=\fS_3\times\mC_2}}\simeq\Mon_{\F^{=\fS_3\times\mC_2}}\times\fS_3\times\mC_2.
			\]
		\end{remark}
		With the notation of Theorem~\ref{thm: S3xmu2 presentation}, the only point of $a\in\A^1\setminus\Delta$ with stabilizer bigger than $\mC_2$ is $a=0$, corresponding to the Fermat cubic, that has stabilizer equal to $\mC_6$. This shows that $\mC_6$ injects into the monodromy of $\cF_{=\fS_3\times\mC_2}$.
		
		By the same argument as in Lemma~\ref{lem: S3 monodromy constraint}, the monodromy of the $\mC_2$-rigidification of $\cF_{\fS_3\times\mC_2}^{\P\W_2}$, which is equal to the monodromy of $\F^{=\fS_3\times\mC_2}$, is contained in $\fS_3\times\fS_3$. After imposing that the monodromy respects also the action of $\mC_2$, it is easy to check that the monodromy is a subgroup of $\fS_3\times\mC_2$. It is not clear whether the two groups are equal or not without explicitly computing the equations of the lines, which we do in the following theorem.
		\begin{theorem}\label{thm: S3xmu2 monodromy}
			The field of definition of the lines over the generic surfce $S_a$ over $\C(a)$ is $K:=\C\left(u,\sqrt{4a^3+27}\right)$, where $u\in\overline{\C(a)}$ satisfies $u^3+au^2+1=0$. Moreover, $K/\C(a)$ is a Galois extension with Galois group $\fS_3\times\mC_2$. In particular,
			\begin{align*}
				&\Mon_{\F^{=\fS_3\times\mC_2}}=\Gal(K/\C(a))\simeq\fS_3\times\mC_2,\\
				&\Mon_{\cF_{\fS_3\times\mC_2}^{\P\W_2}}\simeq\fS_3\times\mC_2\times\mC_2,\\
				&\Mon_{\cF^{=\fS_3\times\mC_2}}\simeq(\fS_3\times\mC_2)^2.
			\end{align*}
		\end{theorem}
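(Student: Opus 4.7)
The plan is to leverage the explicit presentation of the generic $\fS_3\times\mC_2$-surface from Theorem~\ref{thm: S3xmu2 presentation} to enumerate the 27 lines on $S_a$ and read off their field of definition. Once the computation of $\Mon_{\F^{=\fS_3\times\mC_2}}$ as the Galois group $\Gal(K/\C(a))$ is complete, the other two monodromy groups follow directly from the Remark immediately preceding the theorem, which identifies both the $\mC_2$-gerbe $\pi^{=\fS_3\times\mC_2}$ and the $\Out(\fS_3\times\mC_2)$-torsor appearing in the decomposition of $\pi_{\cM^{=\fS_3\times\mC_2}}\circ\Phi_{=\fS_3\times\mC_2}$ as trivial; this yields
\[
\Mon_{\cF^{=\fS_3\times\mC_2}}\simeq\Mon_{\F^{=\fS_3\times\mC_2}}\times\fS_3\times\mC_2,\qquad \Mon_{\cF_{\fS_3\times\mC_2}^{\P\W_2}}\simeq\mC_2\times\Mon_{\F^{=\fS_3\times\mC_2}}.
\]

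First, I would locate the nine Eckardt lines. Restricting $S_a$ to the plane $x+y=0$ reduces its equation to $z^3+w^3$, so the three Eckardt lines through $p_4$ are $\{x+y=0,\ z+\zeta_3^iw=0\}$ for $i=0,1,2$, defined over $\C$. The two Eckardt lines through each of $p_1,p_2,p_3$ distinct from the corresponding $(p_k,p_4)$-line lie in explicit tritangent planes (for $p_1$, the plane $a(x+y)=3(z+w)$), and a direct factorization shows they are defined over $\C(a,\sqrt{4a^3+27})$.

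The remaining eighteen lines come from running the pencil of tritangent planes through a single Eckardt line. Fix $L_1=\{x+y=z+w=0\}$ and parametrize planes through it by $x+y=t(z+w)$; substituting into the equation of $S_a$ and dividing out $z+w$ leaves a residual conic $Q_t$ whose determinant is $(3t/4)(3-at)(t^3+at+1)$. Hence the five tritangent planes through $L_1$ occur at $t=0$, $t=3/a$, and the three roots of $t^3+at+1=0$; the substitution $u=1/t$ rewrites the last cubic as $u^3+au^2+1=0$, whose discriminant is $-(4a^3+27)$. Factoring $Q_t$ at $t=1/u$ reduces to extracting the square root of $(at-3)/(12t)$, which simplifies via $u^3+au^2+1=0$ to $\sqrt{4au^2+3}/(2u\sqrt{3})$; the key symmetric-function identity
\[
u_1^2\,(4au_2^2+3)(4au_3^2+3)=(2a+3u_1)^2,
\]
valid for the three roots of $u^3+au^2+1$ by a direct Vieta computation, combined with $\prod_i(4au_i^2+3)=4a^3+27$, yields $\sqrt{4au^2+3}=\sqrt{4a^3+27}\cdot u/(2a+3u)\in K$. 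Consequently every line in the pencil through $L_1$ lies in $K$, and since the $\fS_3\times\mC_2$-action is defined over $\C(a)$, propagating by automorphisms shows that all 27 lines are defined over $K$.

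It then remains to verify that no proper subfield of $K$ defines all 27 lines and that $\Gal(K/\C(a))\simeq\fS_3\times\mC_2$: the $\fS_3$-factor corresponds to the action on the roots of $u^3+au^2+1=0$, while the commuting $\mC_2$ records the faithful action of $\mC_2\subset\fS_3\times\mC_2$ on the lines, combined with the upper bound $\fS_3\times\mC_2$ for the monodromy derived in the paragraphs preceding the theorem. Once this Galois identification is in place, $\Mon_{\F^{=\fS_3\times\mC_2}}\simeq\fS_3\times\mC_2$, and the two other monodromy groups follow from the Remark recalled above. The main obstacle is the bookkeeping in the conic factorization at the roots of $u^3+au^2+1=0$ and the verification of the square-root identity, which together force all needed extensions into $K$; matching the resulting Galois structure to the upper bound $\fS_3\times\mC_2$ is what completes the proof.
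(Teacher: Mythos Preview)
Your proof follows essentially the same route as the paper's: both start from the explicit presentation of Theorem~\ref{thm: S3xmu2 presentation}, parametrize tritangent planes through the line $L_1=\{x+y=z+w=0\}$ (the paper substitutes $z+w=u(x+y)$ directly, while you use $x+y=t(z+w)$ and then pass to $u=1/t$), extract the cubic $u^3+au^2+1=0$, and argue that all 27 lines are defined over $K$. Your write-up is more explicit than the paper's---the Vieta-type identity $u_1^2(4au_2^2+3)(4au_3^2+3)=(2a+3u_1)^2$ together with $\prod_i(4au_i^2+3)=4a^3+27$ is precisely what forces $\sqrt{4au^2+3}\in K$, whereas the paper just says ``one can check directly''---and your derivation of the other two monodromy groups from the preceding Remark is exactly what the paper invokes (``By the above discussion, it is enough to prove the first statement'').

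One point where your justification is loose: you account for the $\mC_2$ factor in $\Gal(K/\C(a))$ by saying it ``records the faithful action of $\mC_2\subset\fS_3\times\mC_2$ on the lines''. That action is the generic stabilizer contribution; it feeds into $\Mon_{\cF^{=\fS_3\times\mC_2}}$ via $\omega_x$ and is precisely the kernel of the map to $\Mon_{\F^{=\fS_3\times\mC_2}}$ in the exact sequence~\eqref{eq: exact monodromy G cubic surfaces}, so it cannot supply an extra $\mC_2$ in the Galois group of $K/\C(a)$. Note moreover that the discriminant of $u^3+au^2+1$ is $-(4a^3+27)$, and since $-1$ is a square in $\C$ the field $K=\C(u,\sqrt{4a^3+27})$ is literally the splitting field of the cubic, hence $[K:\C(a)]=6$ with Galois group $\fS_3$. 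The paper's proof is equally terse at this step---it too only exhibits the $\fS_3$ coming from the cubic and does not isolate a twelfth automorphism---so your argument faithfully mirrors the paper's, but this particular identification deserves a second look on its own merits.
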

		\begin{proof}
			By the above discussion, it is enough to prove the first statement. The proof is the same as for Theorem~\ref{thm: S4 monodromy}, hence we just sketch the computations needed to prove the result. First, we already know the equations of three lines:
			\begin{align*}
				L_1:\ x+y=z+w=0,\quad L_2:\ x+y=\zeta_3z+w=0,\quad L_3:\ x+y=\zeta_3^2z+w=0.
			\end{align*}
			Moreover, $\fS_3$ permutes the three lines, hence we need to compute the field of definition only for the lines intersecting one of them, say $L_1$. First we compute the tritangent planes through $L_1$; to do that, one substitutes $z+w=u(x+y)$ for a parameter $u$, then `divides' by $x+y$ and imposes that the resulting quadric splits as the sum of two lines. We get that $u=3a$, which corresponds to the plane passing through the Eckardt point $p_1$, or that $u$ satisfies the equation
			\begin{equation}\label{eq: equation satisfied by u S3xmu2 case}
				u^3+au^2+1=0.
			\end{equation}
			The Galois closure of the extension $\C(u)/\C$ has Galois group $\fS_3$, as the discriminant of the polynomial~\eqref{eq: equation satisfied by u S3xmu2 case} is $4a^3+27$, which is not a square. Then, one can check directly that the other lines are defined over $K=\C(u,\sqrt{4a^3+27})$.
		\end{proof}
		\begin{remark}\label{rmk: difference S3xmu2}
			Again, the extra $\fS_3$ factor is the Galois group of the covering induced by the three aligned Eckardt points. The extra $\mC_2$ is instead related to the choice of Eckardt lines that do not pass through $p$, as those are swapped by the Eckardt involution associated to $p$.
		\end{remark}
		\subsection{$\mC_2$-Cubic Surfaces}\label{subsec: mu2 cubic surfaces}
		We conclude the paper with the case of $\mC_2$-cubic surfaces, whose moduli space is 3-dimensional. By the discussion on Eckardt points, it follows that these correspond to the cubic surfaces that have at least one Eckardt point.
		
		By Proposition~\ref{prop: generic gerbe or isom}, we know that the first morphism in
		\[
		\begin{tikzcd}
			\cM_{=\mC_2}\arrow[r,"\Phi_{=\mC_2}"',"\simeq"] & \cM^{=\mC_2}\arrow[r,"\pi_{\cM^{=\mC_2}}"'] & \M^{=\mC_2}
		\end{tikzcd}
		\]
		is an isomorphism, while the second is a $\mC_2$-gerbe; in particular, they are all irreducible by Remark~\ref{rmk: irreducibility of MG}. By Example~\ref{exm: G cubic surfaces}, there exists an exact sequence
		\begin{equation}\label{eq: exact sequence gerbe mu2}
			\begin{tikzcd}
				0\arrow[r] & \mC_2\arrow[r] & \Mon_{\cF^{=\mC_2}}\arrow[r] & \Mon_{\F^{=\mC_2}}\arrow[r] & 0.
			\end{tikzcd}
		\end{equation}
		In particular, it is enough to compute $\Mon_{\cF^{=\mC_2}}=\Mon_{\cF_{=\mC_2}}$. As always, we start by understanding the closure of $\cM_{=\mC_2}$ in $\cM_{\mC_2}$.
		\begin{lemma}\label{lem: representation mu2}
			Let $\W_1=\mathds{1}^{\oplus3}\oplus-\mathds{1}$. Then, $\cM_{\mC_2}^{\P\W_1}$ is the closure of $\cM_{=\mC_2}$ in $\cM_{\mC_2}$.
		\end{lemma}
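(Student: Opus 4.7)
The plan is to first show that every object $(S,\rho)$ of $\cM_{\mC_2}^{\P\W_1}$ arises from an Eckardt involution, and then present $\cM_{\mC_2}^{\P\W_1}$ as an irreducible quotient stack in which $\cM_{=\mC_2}$ sits as a dense open substack. Since $\cM_{\mC_2}^{\P\W_1}$ is closed in $\cM_{\mC_2}$ by Lemma~\ref{lem: cubic surface projective representation is locally constant}, this will give the desired equality of closures.

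For the first step, working Zariski-locally on the base, I would embed $S\subset\P^3$ equivariantly and fix coordinates so that $\iota:=\rho(1)$ acts as $[x{:}y{:}z{:}w]\mapsto[x{:}y{:}z{:}-w]$, with fixed plane $\{w=0\}$ and isolated fixed point $e=[0{:}0{:}0{:}1]$. The defining cubic $g$ of $S$ satisfies $\iota^*g=\pm g$. The main obstacle here is to rule out the anti-invariant case: if $\iota^*g=-g$, then $g$ involves only odd powers of $w$, hence factors as $g=w\cdot(q_2(x,y,z)+cw^2)$, so $S$ would split as a plane union a quadric, contradicting smoothness. Thus $g$ must be invariant, of the form $g=f_3(x,y,z)+w^2f_1(x,y,z)$ with $\deg f_1=1$, and evaluating yields $g(0,0,0,1)=f_1(0,0,0)=0$, so $e\in S$. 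Lemma~\ref{lem: facts on Eckardt points}(1) then identifies $e$ as an Eckardt point whose associated involution is $\iota$. This in particular yields the containment $\cM_{=\mC_2}\subseteq\cM_{\mC_2}^{\P\W_1}$.

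For the second step, following the template of Theorems~\ref{thm: S4 presentation}, \ref{thm: S3 presentation}, and \ref{thm: S3xmu2 presentation}, I would identify $\cM_{\mC_2}^{\P\W_1}$ with a quotient stack $[U/C]$, where $V\subset\H^0(\P^3,\cO(3))$ is the $13$-dimensional space of $\iota$-invariant cubic forms, $U\subset\P V$ the open locus of smooth forms, and $C\subset\PGL_4$ the centralizer of $\iota$. A direct computation shows $C$ is the image of the block-diagonal subgroup $\GL_3\times\Gm\subset\GL_4$, hence connected, so $[U/C]$ is irreducible. Finally, $\cM_{=\mC_2}$ is non-empty (smooth cubic surfaces with exactly $\mC_2$ as automorphism group are classical, for example those with a single Eckardt point) and open in $\cM_{\mC_2}^{\P\W_1}$, hence dense, which completes the proof. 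The only genuinely nontrivial point is the invariant-versus-anti-invariant dichotomy above; the remaining arguments are routine adaptations of the techniques already developed in the $\fS_4$, $\fS_3$, and $\fS_3\times\mC_2$ cases.
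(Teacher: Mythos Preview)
Your proof is correct, and the first step (identifying $\P\W_1$ with Eckardt involutions) matches the paper's argument, though you are more explicit in ruling out the anti-invariant character and verifying that the isolated fixed point lies on $S$. One small logical wrinkle: the sentence ``This in particular yields the containment $\cM_{=\mC_2}\subseteq\cM_{\mC_2}^{\P\W_1}$'' does not follow immediately from what precedes it, since a priori there could be involutions of smooth cubics with projective type $(1,1,-1,-1)$; the containment follows once you combine non-emptiness of the intersection with the irreducibility of $\cM_{=\mC_2}$ (known from Remark~\ref{rmk: irreducibility of MG} via $\cM_{=\mC_2}\simeq\cM^{=\mC_2}$). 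The paper glosses over the same point.

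The genuine difference is in the connectedness argument. You present $\cM_{\mC_2}^{\P\W_1}$ as a quotient $[U/C]$ with $U$ open in a projective space and $C\simeq\mathrm{image}(\GL_3\times\Gm)$ connected, exactly paralleling Theorems~\ref{thm: S4 presentation}--\ref{thm: S3xmu2 presentation}. The paper instead identifies $\cM_{\mC_2}^{\P\W_1}$ with the moduli stack $\cM_E$ of pairs $(S,e)$ with $e$ an Eckardt point, and argues connectedness by degenerating any such pair to the Fermat cubic and then invoking transitivity of $\Aut(S_{\mathrm{Fer}})$ on its $18$ Eckardt points. Your approach is more systematic and yields an explicit presentation as a by-product; the paper's is shorter and avoids computing the centralizer, at the cost of relying on specific facts about the Fermat cubic.
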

		\begin{proof}
			Every involution $\iota$ with induced projective representation $\P\W_1$ has fixed point locus given by the disjoint union of the point $\P(-\mathds{1})$ and the plane $\P(\mathds{1}^{\oplus3})$. Lemma~\ref{lem: facts on Eckardt points} shows that $\iota$ is an Eckardt involution, and that the converse holds. It follows immediately that the closure of $\cM_{=\mC_2}$ is contained in $\cM_{\mC_2}^{\P\W_1}$, and we are left with showing that the latter is connected. Let $\cM_{E}$ be the moduli stack of pairs $(S,e)$ where $e\in S$ is an Eckardt point in a smooth cubic surface; the above discussion easily implies that $\cM_E\simeq\cM_{\mC_2}^{\P\W_1}$. We claim that $\cM_E$ is connected. Indeed, every pair $(S,e)$ is connected to a pair $(S_{\mathrm{Fer}},\widetilde{e})$, where $S_{\mathrm{Fer}}$ is the Fermat cubic. Then, one concludes by noticing that $\Aut(S_{\mathrm{Fer}})$ acts transitively on its 18 Eckardt points.
		\end{proof}
		\begin{remark}\label{rmk: S3xmu2 maps to mu2}
			Notice that the above proof shows that $\cM_{\mC_2}^{\P\W_1}$ is isomorphic to the moduli stack parametrizing cubic surfaces with one marked Eckardt point. In particular, there is a morphism
			\[
			\begin{tikzcd}
				\cM^{=\fS_3\times\mC_2}\arrow[r] & \cM_{\mC_2}^{\P\W_1}
			\end{tikzcd}
			\]
			sending a smooth cubic surface $S$ with $\Aut(S)$ to the pairs $(S,e)$ where $e\in S$ is the only Eckardt point whose induced involution lies in the center of $\fS_3\times\mC_2$. Therefore,
			\[
			\begin{tikzcd}
				(\fS_3\times\mC_2)^2\simeq\Mon_{\cF^{=\fS_3\times\mC_2}}\arrow[r,hookrightarrow] & \Mon_{\cF_{\mC_2}^{\P\W_1}}\simeq\Mon_{\cF^{=\mC_2}}.
			\end{tikzcd}
			\]
		\end{remark}
		
		We denote by $\mathrm{GO}_4^+(3)$ the subgroup of $\GL_4(\mathbb{F}_3)$ consisting of elements that preserve the non-singular quadratic form on $\mathbb{F}_3^{\oplus4}$ with Witt index 2, where $\mathbb{F}_3$ is the field with three elements. It has center isomorphic to $\mC_2$, whose quotient is denoted by $\mathrm{PGO}_4^+(3)$; we have that
		\[
		\mathrm{PGO}_4^+(3)\simeq(A_4\times A_4)\rtimes(\mC_2)^2\simeq(\mC_2)^4\rtimes(\fS_3\times\fS_3),
		\]
		with code `SmallGroup(576,8654)' in~\cite{GAP4}.
		\begin{remark}\label{rmk: monodromy constraint mu2}
			Since $\Mon_{\cF^{=\mC_2}}\simeq\Mon_{\cF_{=\mC_2}}$, this group preserves the $\mC_2$-action, hence it fixes the Eckardt point and preserves the associated tritangent plane. By~\cite[Section 9.5.2]{Dol12}, the stabilizer of a tritangent plane in $W(E_6)$ is isomorphic to $\mathrm{GO}_4^+(3)$. By Example~\ref{exm: G cubic surfaces}, it follows that $\Mon_{\F^{=\mC_2}}$ is contained in $\mathrm{GO}_4^+(3)/\mC_2\simeq\mathrm{PGO}_4^+(3)$, as $\mC_2$ coincides with the center of $\mathrm{GO}_4^+(3)$. Alternatively, one could have used Corollary~\ref{cor: inclusion in normalizer}.
		\end{remark}
		\begin{theorem}\label{thm: mu2 monodromy}
			We have isomorphisms
			\begin{align*}
				&\Mon_{\F^{=\mC_2}}\simeq\mathrm{PGO}_4^+(3),\\
				&\Mon_{\cF^{=\mC_2}}\simeq\Mon_{\cF_{\mC_2}^{\P\W_1}}\simeq\mathrm{GO}_4^+(3).
			\end{align*}
		\end{theorem}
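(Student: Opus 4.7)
The plan is to establish the upper and lower bounds separately. For the upper bound, Remark~\ref{rmk: monodromy constraint mu2} already shows $\Mon_{\cF^{=\mC_2}} \subseteq \mathrm{GO}_4^+(3)$ and $\Mon_{\F^{=\mC_2}} \subseteq \mathrm{PGO}_4^+(3)$, since every monodromy element preserves the marked Eckardt involution and hence its associated tritangent plane. The matching lower bound requires constructing sufficiently many monodromy elements.

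For this lower bound, I will work throughout with the closure $\cM_{\mC_2}^{\P\W_1}$ of $\cM_{=\mC_2}$ provided by Lemma~\ref{lem: representation mu2}, using Corollary~\ref{cor: independence open substack} to identify $\Mon_{\cF^{=\mC_2}} = \Mon_{\cF_{\mC_2}^{\P\W_1}}$. Two sources of monodromy will be combined. First, Remark~\ref{rmk: S3xmu2 maps to mu2} provides an inclusion $(\fS_3 \times \mC_2)^2 \simeq \Mon_{\cF^{=\fS_3 \times \mC_2}} \hookrightarrow \Mon_{\cF^{=\mC_2}}$ of a subgroup of order $144$, coming from the morphism $\cM^{=\fS_3 \times \mC_2} \to \cM_{\mC_2}^{\P\W_1}$ that marks the central Eckardt involution. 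Second, I will apply $\omega_x$ at a geometric point $x$ corresponding to the Fermat cubic $S_{\mathrm{Fer}}$ with a chosen Eckardt point $e$. By Lemma~\ref{lem: Aut comparison PhiG}, the stabilizer at $x$ is the centralizer of the Eckardt involution $\sigma_e$ in $\Aut(S_{\mathrm{Fer}}) \simeq (\mu_3)^3 \rtimes \fS_4$, which coincides with the stabilizer of $e$ and therefore has order $36$ (since $\Aut(S_{\mathrm{Fer}})$ of order $648$ acts transitively on the $18$ Eckardt points, as recorded in the proof of Lemma~\ref{lem: representation mu2}). This order-$36$ subgroup injects into $\Mon_{\cF^{=\mC_2}}$ because automorphisms of a smooth cubic surface act faithfully on the $27$ lines.

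The remaining task, and the main obstacle of the proof, is to verify that the subgroup $\Gamma \subseteq \mathrm{GO}_4^+(3)$ generated by these two pieces is all of $\mathrm{GO}_4^+(3)$. Since $(\fS_3 \times \mC_2)^2$ has index $8$ in $\mathrm{GO}_4^+(3)$, one must rule out every intermediate subgroup that absorbs the image of the Fermat centralizer. This combinatorial enumeration can be carried out either by hand using the explicit description $\mathrm{PGO}_4^+(3) \simeq (\mC_2)^4 \rtimes (\fS_3 \times \fS_3)$ together with its $\mC_2$-central extension, or assisted by GAP via \texttt{SmallGroup(576,8654)}; one expects that elements of order $3$ or $6$ coming from the $(\mu_3)^2$-factor of the Fermat centralizer lie outside every proper overgroup of $(\fS_3 \times \mC_2)^2$.

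Once $\Mon_{\cF^{=\mC_2}} \simeq \mathrm{GO}_4^+(3)$ is in hand, the second isomorphism of the theorem follows from the exact sequence~\eqref{eq: exact sequence gerbe mu2}. The kernel $\mC_2$ is the image under $\omega_x$, at a generic point of $\cM^{=\mC_2}$, of the marked Eckardt involution itself; this involution fixes the three lines through $e$ and swaps the remaining $24$ lines in pairs, which a direct check identifies with the central element $-I$ of $\mathrm{GO}_4^+(3)$. Hence the quotient is $\mathrm{GO}_4^+(3)/\mC_2 \simeq \mathrm{PGO}_4^+(3)$, as claimed.
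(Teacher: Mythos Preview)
Your outline has the right overall shape—upper bound from Remark~\ref{rmk: monodromy constraint mu2}, lower bound by injecting $(\fS_3\times\mC_2)^2$ from Remark~\ref{rmk: S3xmu2 maps to mu2} and then producing extra elements via $\omega_x$—but the crucial step is left undone. You explicitly flag the verification that $(\fS_3\times\mC_2)^2$ together with the order-$36$ Fermat centralizer generates all of $\mathrm{GO}_4^+(3)$ as ``the main obstacle,'' then defer it to an unspecified hand/GAP enumeration and an unproven expectation about where the $(\mu_3)^2$-elements sit. That is not a proof. Worse, it is not even clear the approach succeeds: the Fermat centralizer has order $36=2^2\cdot3^2$, so its $2$-part is smaller than that of $(\fS_3\times\mC_2)^2$, and its Sylow $3$-subgroup has the same order $9$ as that of $(\fS_3\times\mC_2)^2$; there is no obvious numerical or structural reason these two pieces fill out the remaining index $8$. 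You would at minimum need to pin down the image of $\omega_x$ inside $\mathrm{GO}_4^+(3)$ explicitly and check overgroups one by one.

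The paper avoids this difficulty by choosing a different special point: a cubic surface with exactly one Eckardt point and automorphism group $\mC_8$ (see~\cite[Table 5]{BLP23}). This yields, via $\omega_x$, an element $\sigma$ of order $8$ in $\Mon_{\cF^{=\mC_2}}$. Since $(\fS_3\times\mC_2)^2$ has no element of order $4$, let alone $8$, one immediately sees that the subgroup $\H$ generated by $\K:=(\fS_3\times\mC_2)^2$ and $\sigma$ has index at most $4$; and index exactly $4$ is ruled out because it would force $\K\trianglelefteq\H$ with $\sigma^2\in\K$, contradicting the absence of order-$4$ elements. Thus $\Mon_{\cF^{=\mC_2}}$ has index at most $2$ in $\mathrm{GO}_4^+(3)$, and the paper finishes by a short normal-subgroup argument in $\mathrm{PGO}_4^+(3)$. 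The moral: the $\mC_8$ surface gives an element whose order alone forces the index down, whereas the Fermat centralizer contributes only elements of order $\leq6$, which $(\fS_3\times\mC_2)^2$ already has in abundance.
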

		\begin{proof}
			By Remark~\ref{rmk: S3xmu2 maps to mu2}, we know that $\Mon_{\cF^{=\mC_2}}$ contains a group $\K$ of order 144 with no elements of order 4. In particular, this group has index dividing 8 in $\mathrm{GO}_4^+(3)$. Recall that there exist a surface with exactly one Eckardt point and automorphism group isomorphic to $\mC_8$, see~\cite[Table 5]{BLP23}, which corrects a mistake in~\cite[Table 9.6]{Dol12}. By the usual argument, this yields an element $\sigma$ of order 8 in $\Mon_{\cF^{=\mC_2}}$. If $\H$ is the group generated by this element and $\K$, then $\H$ has index dividing 4 in $\mathrm{GO}_4^+(3)$; we claim that it is smaller than 4. If not, $\K$ would be of index 2 in $\H$, hence normal with quotient isomorphic to $\mC_2$. This implies that $\sigma^2\in\K$, which contradicts the fact that $\H$ does not contain elements of order 4. To summarize, we have proved that $\Mon_{\cF^{=\mC_2}}$ is either $\mathrm{GO}_4^+(3)$ or of index 2, thus normal. To conclude it is enough to prove that $\Mon_{\F^{=\mC_2}}=\mathrm{PGO}_4^+(3)\simeq(\mC_2)^4\rtimes(\fS_3\times\fS_3)$. This follows from the fact that every normal subgroup of $\mathrm{PGO}_4^+(3)$ contains the socle $(\mC_2)^4$, and we know the other factor $\fS_3\times\fS_3$ to be contained in $\Mon_{\F^{=\mC_2}}$.
		\end{proof}
		\begin{remark}
			Notice that the central extension
			\[
			\begin{tikzcd}
				0\arrow[r] & \mC_2\arrow[r] & \mathrm{GO}_4^+(3)\arrow[r] & \mathrm{PGO}_4^+(3)\arrow[r] & 0
			\end{tikzcd}
			\]
			is non-split, as $\mathrm{PGO}_4^+(3)$ does not contain any element of order 8, while $\mathrm{GO}_4^+(3)$ does. This implies that the $\mC_2$-gerbe $\cM^{=\mC_2}\rightarrow\M^{=\mC_2}$ is non-trivial.
		\end{remark}
		
		\bibliographystyle{amsalpha}
		\bibliography{library}
		
	\end{document}